\newcommand{\floor}[1]{\ensuremath{\left\lfloor #1\right\rfloor} }
\newcommand{\ceil}[1]{\ensuremath{\left\lceil #1\right\rceil} }
\newcommand{\size}[1]{\left \vert #1 \right \vert}
\newtheorem{lemma}{Lemma}[section]
\newtheorem{theorem}[lemma]{Theorem}
\newtheorem{proposition}[lemma]{Proposition}
\newtheorem{corollary}[lemma]{Corollary}
\newtheorem{conjecture}[lemma]{Conjecture}
\theoremstyle{definition}
\newtheorem{definition}[lemma]{Definition}
\DeclareMathOperator{\diam}{diam}
\DeclareMathOperator{\dist}{dist}
\DeclareMathOperator{\rad}{rad}
\def\cart{\, \Box \,}
\newcommand{\speeds}{c_{s,s}}
\newcommand{\speed}[1]{c_{#1,#1}}
\newcommand{\speedtwo}{c_{2,2}}
\newcommand{\capt}[1]{\mathrm{capt}_{#1,#1}}
\begin{document}
\title{Accelerated Cops and Robbers}

\author{William B. Kinnersley}
\address{Department of Mathematics and Applied Mathematical Sciences, University of Rhode Island, University of Rhode Island, Kingston, RI, USA, 02881}
\email{\tt billk@uri.edu}

\author{Nikolas Townsend}
%\address{Department of Mathematics and Applied Mathematical Sciences, University of Rhode Island, University of Rhode Island, Kingston, RI, USA, 02881}

%\email{\tt townsendn@uri.edu}

\address{Department of Mathematics, University of Massachusetts Boston, Wheatley Hall, 100 William T Morrissey Blvd, Boston, MA 02125}
\email{\tt nikolas.townsend@umb.edu}

\subjclass[2020]{Primary 05C57}
\keywords{pursuit-evasion games, cops and robbers, Cartesian products}

\begin{abstract} We consider a variant of Cops and Robbers in which both the cops and the robber are allowed to traverse up to $s$ edges on each of their turns, where $s\ge 2$.  We give several general for this new model as well as establish bounds for the cop numbers for grids and hypercubes.  We also determine the capture time of cop-win graphs when $s=2$ up to a small additive constant. \end{abstract}

\maketitle

\begin{section}{Introduction}

%In this paper, we establish original results on the variant of the pursuit-evasion game Cops and Robbers where the speed of each player is increased from the original game.  Specifically, we consider the game where both the cops and the robber have the same speed.  This variant was originally introduced by Mehrabian \cite{MehThesis} in his master's thesis.

\textit{Cops and Robbers} is a pursuit-evasion game that has been studied extensively since its inception in the 1980s.  In this game, a team of cops aims to capture a robber on a given $G$.  To begin the game, the cops place themselves on vertices of the graph.  In response, the robber places himself down on a vertex.  From this point on, the game is played in \textit{rounds}, with each round consisting of a cops' turn and then a robber's turn.  During the cops' turn, each cop either moves to an adjacent vertex or to stays put; likewise, on the robber's turn, he either moves to an adjacent vertex or stays put.  The cops win if, at any point, some cop occupies the same vertex as the robber; when this happens, we say that the cops \textit{capture} the robber.  The robber wins if he can perpetually evade capture no matter how the cops choose to move.  For convenience, we will use she/her pronouns to refer an individual cop and he/him pronouns to refer to the robber.

Given a graph $G$, it is natural to ask for the minimum number of cops needed to capture a robber on $G$; we call this parameter the \textit{cop number} of $G$ and denote it by $c(G)$.  Cops and Robbers was first introduced independently by Quillot \cite{Qui78} and by Nowakowski and Winkler \cite{NW83}; both papers characterized the \textit{cop-win} graphs, i.e. graphs with cop number 1.  Later, Aigner and Fromme \cite{AF84} formally introduced the notion of cop number and published the first results on graphs with cop number greater than 1.  For more background on the history of Cops and Robbers, we refer the reader to \cite{BN11}.

Over the years, the game has spawned many variants.  Recently, variants of Cops and Robbers in which the robber moves at increased ``speed'' have garnered a substantial amount of attention.  We say that a robber has \textit{speed $s$} if he is allowed to traverse $s$ edges per turn, with the restriction that he cannot pass through any vertex occupied by a cop.  (Note that the case $s=1$ is equivalent to the original model of Cops and Robbers.)  This variant of the game has been studied by several authors.  Fomin, Golovach, Kratochv\'{i}l, Nisse, and Suchan \cite{FGKNS10} explored the computational complexity of this variant; they showed that if $s \ge 2$, then computing the cop number is NP-hard even on split graphs, and they showed that the parameter can be computed in polynomial time on graphs of bounded cliquewidth.  They also showed that $c_2(P_n \cart P_n) = \Omega(\log n)$, thereby showing that the cop number of a planar graph can be unbounded when $s \ge 2$.  More recently, Frieze, Krivilevich, and Loh \cite{FKL12} showed that when $s>1$, the number of cops needed to capture a robber on $G$ is at most $\displaystyle n/\left(1+1/s\right)^{1-o(1)\cdot \sqrt{\log_{1+1/s}(n)}}$.  The case where $s = \infty$ -- that is, where the robber can traverse as many edges as he likes on each turn -- has received considerable attention; see, for example, \cite{AM15}, \cite{KT22}, \cite{Meh12}, \cite{Meh15}, \cite{MehThesis}.

One can also increase the speed of the cops.  This idea was first suggested by Mehrabian \cite{MehThesis}, who proposed a model in which the cops and the robber both have the same speed $s$; he showed that $c_{s,s}(G)=c(G^s)$, where $G^s$ is the $s$th power of the graph $G$.  More generally, we can assign a speed $s$ to the cops and speed $t$ to the robber; we refer to this variant of Cops and Robbers as the \textit{speed-$(s,t)$} game, and we define the \textit{speed-$(s,t)$} cop number of a graph $G$, denoted $c_{s,t}(G)$, to be the minimum number of cops needed to guarantee capture of a robber in the speed-$(s,t)$ game.  In this paper, we will focus on the case where $s=t$, as first proposed by Mehrabian.% (see Theorem \ref{thm:graphpower} below).  We will make use of this result in Sections \ref{sec:general} and \ref{sec:capture_time}.

The speed-$(s,s)$ variant has connections to another version of Cops and Robbers, namely the \textit{distance-$k$} variant of the game.  In this variant, the rules are identical to the original game, except that the cops win if any cop occupies a vertex within distance $k$ of the robber.  Bonato, Chiniforooshan, and Pra\l{}at \cite{BCP10} introduced this variant and gave bounds on the maximum number of cops needed to capture a robber over all graphs of order $n$.  
%gave general bounds on the cop number of this game, denoted $c_{k}$.  In particular, they showed that $\displaystyle \left(\frac{n}{k}\right)^{1/2+o(1)}\le c_k(n)=O\left(\frac{n}{\log\left(\frac{2n}{k+1}\right)}\cdot\frac{\log(k+2)}{k+1}\right)$, where $c_k(n)$ is the maximum value of $c_k$ over all graphs of order $n$.  
Relating this variant to the speed-$(s,s)$ game, note that the cops can capture the robber in the speed-$(s,s)$ game only if, at the start of some cop turn, some cop is within distance $s$ of the robber; the same conditions permit the cops to win in the distance-$(s-1)$ game.  Hence, by mimicking a winning strategy for the speed-$(s,s)$ game, the cops can win in the distance-$(s-1)$ game.%  Thus, $c_{s-1}(G) \le \speeds(G)$ for any graph $G$.  A natural question (which is beyond the scope of this paper) would be ask for which graphs $G$ can we establish a lower bound on $c_{s-1}(G)$ in terms of $\speeds(G)$?

The overall structure of this paper is as follows.  In Section \ref{sec:general}, we provide several general results that relate the speed-$(s,s)$ game to other variants of Cops and Robbers.  We also investigate how $\speeds(G)$ can change as $s$ increases.
%specifically, we present significant progress to the claim that for any graph $G$, $c_{s,s}(G)$ is monotonically decreasing in $s$ (see Theorems \ref{thm:speeds_factors_monotone} and \ref{thm:nonincreasing_sequence}).  A consistent theme in the study of many of variants %in which some of the players are allowed to move faster than in the original game
% is the fascination in playing on Cartesian products of graphs; in particular, grid graphs (see \cite{NN98}, \cite{MehThesis}, \cite{FGKNS10}, \cite{KT22}).
In Section \ref{sec:Cartesian}, we investigate the speed-$(s,s)$ game on Cartesian products of graphs, with a focus on products of trees.  In particular, in Theorem \ref{thm:speed_s_large_grids} we show that $c_{s,s}(G)=\left\lceil d/2\right\rceil$ if $G$ is the $d$-fold Cartesian product of trees with large diameter, and in Theorem \ref{thm:hypercube_upper}, we give an upper bound on $\speeds$ on the $d$-dimensional hypercube.  
%We find in Corollary \ref{cor:speed_2_hypercube}, that this upper bound is tight when $s=2$, and we show a general lower bound on $c_{s,s}(Q_d)$ when $s>2$ in Theorem \ref{thm:speed_s_hypercube_lower_bound}. 
In Section \ref{sec:capture_time}, we will investigate the notion of capture time in the speed-$(s,s$) game.  The \textit{capture time} of a graph $G$ with cop number $k$ is the minimum number of rounds needed for $k$ cops to capture a robber on $G$, provided that the robber evades capture as long as possible.  This notion was introduced for the original model of Cops and Robbers by Bonato, Golovach, Hahn, and J. Kratochv\'{i}l \cite{BGHK09}, who showed that every cop-win graph of order $n$ has capture time at most $n-3$; Gaven\v{c}iak \cite{Gav10} later improved this bound to $n-4$, which is tight.  We show in Theorem \ref{thm:capture_time_speed2} that in the speed-$(2,2)$ game, the maximum capture time among all cop-win graphs of order $n$ is at least $n-7$. Finally, in Section \ref{sec:open_questions}, we present some future directions for further research.

%Aside from this result, no results on the speed-$(s,t)$ game have yet been published.  \comment{This sounds a bit negative.  Need to find a better way to frame this.  It reads as if people weren't able to get anyone to publish this stuff.}

%One might suppose that since the speed-$(s,s)$ game can, in some sense, be reduced to the original game by the previous result, that the study of this variant will not yield new or interesting results.  This connection to the original game is useful, however, we aim to show that the speed-$(s,s)$ game is interesting to study in its own right.

%In this paper, we will focus on the variant of the \textit{same-speed-$s$} variant of the game, wherein the cops and robber are each given a speed $s$, where $s\ge 2$.\comment{The preceding sentence is weird -- we already said all of this.}  
Before diving into Section \ref{sec:general}, we share some notation that we will use throughout the paper.  Unless otherwise specified, all graphs considered in this paper will be finite, undirected, and reflexive.  For a vertex $v$ in a graph $G$, we say that the set of all neighbors of $v$ is the \textit{neighborhood} of $v$, denoted $N[v]$; we say that the set of all vertices within distance $s$ of a vertex $v$ is the \textit{$s$th neighborhood of $v$}, denoted $N_s[v]$.  Note that $N_1[v]=N[v]$.  As a player traverses a path on their turn, we say that they take a \textit{step} each time that they move along an edge.  We use $\lg(x)$ to denote the base-2 logarithm of $x$.  
%Now we are ready to present general results about the speed-$(s,s)$ game.

\begin{section}{General Results}\label{sec:general}

In this section, we establish some elementary properties of the speed-$(s,s)$ game that will prove useful throughout the paper.  In addition, we will explore connections between the speed-$(s,s)$ game and the traditional model of Cops and Robbers (i.e. the speed-$(1,1)$ game).  We begin by recalling a result of Mehrabian \cite{MehThesis} that provides another perspective from which to view the speed-$(s,s)$ game. 

Let $\speeds(G)$ denote the speed-$(s,s)$ cop number of $G$.  Each player may, on their turn, move to any vertex within distance $s$ of their current location.  As Mehrabian shows, one may draw a parallel from the speed-$(s,s)$ game played on $G$ to the original version of Cops and Robbers played on $G^s$ (the graph with vertex set $V(G)$ and an edge joining vertices $u$ and $v$ if $\textrm{dist}_G(u,v) \le s$).

\begin{theorem}[\cite{MehThesis}, Theorem 10.1]\label{thm:graphpower}
For any graph $G$ and positive integer $s$, we have $\speeds(G) = c(G^s)$.
\end{theorem}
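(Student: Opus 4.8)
The plan is to prove the two inequalities $\speeds(G)\le c(G^s)$ and $c(G^s)\le\speeds(G)$ separately, each by having one team of cops simulate a winning strategy taken from the other game while keeping the positions of the cops and the robber synchronized between the two games. The bridge between them is the fact that $v$ is reachable from $r$ in a single move of $G^s$ exactly when $\dist_G(r,v)\le s$, which is precisely when a speed-$s$ player standing at $r$ could walk to $v$ were nothing in the way.

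First I would establish $\speeds(G)\le c(G^s)$ by fixing a winning strategy for $c(G^s)$ cops on $G^s$ and running it inside the speed-$(s,s)$ game on $G$. Whenever the robber makes a legal speed-$(s,s)$ move from $r$ to $v$, he walks at most $s$ edges while avoiding every cop, so $\dist_G(r,v)\le s$ and $v$ is unoccupied; hence this is a legal move for the robber in the virtual game on $G^s$. Passing it to the virtual strategy yields cop moves along edges of $G^s$, and each such move is realized in $G$ by a cop walking a shortest path of length at most $s$, which is a legal speed-$(s,s)$ move. The two games stay synchronized, so the capture guaranteed by the $G^s$ strategy also happens in the real game. (If the virtual strategy were ever to capture the robber by passing through his vertex in the middle of a move, the corresponding real cop lies within distance $s$ of the robber and can simply move directly onto him.)

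The reverse inequality $c(G^s)\le\speeds(G)$ requires the analogous simulation in the other direction, and here there is one real difficulty: on $G^s$ the robber may move from $r$ to any unoccupied $v$ with $\dist_G(r,v)\le s$, whereas a speed-$(s,s)$ robber must additionally avoid all cop-occupied vertices en route, so the $G^s$ robber has strictly more moves available. The claim that settles the matter is that these extra moves are useless: if $\dist_G(r,v)\le s$ but every $r$--$v$ walk of length at most $s$ in $G$ passes through a cop, then a shortest $r$--$v$ path also has length at most $s$ and so meets a cop at some vertex $w$, whence $\dist_G(w,v)\le\dist_G(r,v)\le s$ and the cop at $w$ captures the robber on the next turn. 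Consequently the cops can simulate their winning speed-$(s,s)$ strategy on $G$ while playing on $G^s$: any $G^s$ robber move that is also a legal virtual speed-$(s,s)$ move is handled exactly as in the previous paragraph, and any other $G^s$ robber move is one of the useless ones and leads to capture immediately. In either case the cops win.

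I expect the asymmetry between the two robbers' move sets to be the only genuine obstacle; once the observation that a destination reachable in $G^s$ but blocked in $G$ must lie within distance $s$ of a cop is established, both inequalities follow and hence $\speeds(G)=c(G^s)$.
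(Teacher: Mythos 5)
Your proof is correct; the paper does not prove this statement but simply quotes it from Mehrabian's thesis, and your two-way simulation is the standard argument for it. You correctly isolate the only nontrivial point --- that the robber on $G^s$ has moves unavailable to the speed-$(s,s)$ robber on $G$, and that any such move places him within distance $s$ in $G$ of a cop sitting on a shortest path, hence leads to immediate capture --- so nothing essential is missing.
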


We will make extensive use of Theorem \ref{thm:graphpower} throughout the paper: in some circumstances it will be helpful to view the speed-$(s,s)$ game on $G$ from the perspective of having both players move at speed $s$, while in other circumstances it will be helpful to view it in terms of the ordinary version of Cops and Robbers played on $G^s$. 

Before stating the next result, we recall some relevant definitions.
\begin{definition}\label{def:retraction}
    A \textit{homomorphism} from a graph $G$ to a graph $H$ is a mapping $\phi \, : \, V(G) \rightarrow V(H)$ such that whenever $uv \in E(G)$, we have $\phi(u)\phi(v) \in E(H)$.  We further say that a homomorphism $\phi\, : \, V(G) \rightarrow V(H)$ is a \textit{retraction} from $G$ to $H$ if $H$ is a subgraph of $G$ and $\phi(v)=v$ for all vertices $v\in V(H)$.  When a retraction from $G$ to $H$ exists, we say that $H$ is a \textit{retract} of $G$.
\end{definition}

For the purposes of this paper, as is customary when studying Cops and Robbers, we assume that all graphs are reflexive.  Under this assumption, Berarducci and Intriglia \cite{BI93} showed that when $H$ is a retract of $G$, we have $c(G) \ge c(H)$.  Since then, the cop numbers of many variants of Cops and Robbers have likewise been shown to be monotone with respect to retraction; the speed-$(s,s)$ game is no exception.

\begin{theorem}\label{thm:retract}
For any graphs $G$ and $H$, if $H$ is a retract of $G$, then $\speeds(G) \ge \speeds(H)$.
\end{theorem}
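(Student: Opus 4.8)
The plan is to combine Theorem \ref{thm:graphpower} with the known monotonicity of the ordinary cop number under retraction. By Theorem \ref{thm:graphpower}, we have $\speeds(G) = c(G^s)$ and $\speeds(H) = c(H^s)$, so it suffices to show $c(G^s) \ge c(H^s)$. By the result of Berarducci and Intriglia cited above, this in turn follows once we establish that $H^s$ is a retract of $G^s$. Thus the entire argument reduces to the graph-theoretic claim that a retraction from $G$ to $H$ induces a retraction from $G^s$ to $H^s$.

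To prove that claim, let $\phi \colon V(G) \to V(H)$ be a retraction witnessing that $H$ is a retract of $G$, and regard $\phi$ as a map $V(G^s) \to V(H^s)$ (which makes sense since $V(G^s) = V(G)$ and $V(H^s) = V(H)$). Three things need checking. First, $H^s$ is a subgraph of $G^s$: the vertex sets satisfy $V(H) \subseteq V(G)$, and any edge $uv$ of $H^s$ satisfies $\dist_G(u,v) \le \dist_H(u,v) \le s$ because $H$ is a subgraph of $G$, so $uv$ is also an edge of $G^s$. Second, $\phi$ fixes $V(H)$ pointwise, which is immediate from the hypothesis that $\phi$ is a retraction from $G$ to $H$. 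Third — the crucial step — $\phi$ is a homomorphism from $G^s$ to $H^s$: if $uv \in E(G^s)$, then $\dist_G(u,v) \le s$, so there is a walk $u = w_0, w_1, \dots, w_k = v$ in $G$ with $k \le s$; applying $\phi$ yields a walk $\phi(w_0), \dots, \phi(w_k)$ in $H$ of length at most $k \le s$, whence $\dist_H(\phi(u), \phi(v)) \le s$ and therefore $\phi(u)\phi(v) \in E(H^s)$.

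I expect the only (mild) obstacle to be the bookkeeping: one must be careful that the two graph powers have different vertex sets, and one must use the fact that a graph homomorphism cannot increase distances, together with the reflexivity assumption so that the image of a walk under $\phi$ is again a walk even on a step where $\phi$ collapses an edge to a single vertex. With these points in hand, $\phi$ is a retraction from $G^s$ onto $H^s$, so $c(G^s) \ge c(H^s)$, and the chain $\speeds(G) = c(G^s) \ge c(H^s) = \speeds(H)$ completes the proof.
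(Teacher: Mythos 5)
Your proposal is correct and follows essentially the same route as the paper's proof: reduce via Theorem~\ref{thm:graphpower} and the Berarducci--Intriglia result to showing that $H^s$ is a retract of $G^s$, and then verify that the original retraction $\phi$ still works by mapping a shortest $u$--$v$ path to a walk of length at most $s$ in $H$. Your extra remarks (that $H^s$ is a subgraph of $G^s$ and that reflexivity handles collapsed edges) are points the paper leaves implicit, but they do not change the argument.
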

\begin{proof}
We first claim that $H^s$ is a retract of $G^s$.  Let $\phi$ be a retraction from $G$ to $H$; we will show that $\phi$ is also a retraction from $G^s$ to $H^s$.  Since $\phi$ is a retraction from $G$ to $H$, and since $V(H) = V(H^s)$, we have $\phi(v)=v$ for all $v\in V(H^s)$.  It remains to show that $\phi$ is a homomorphism from $G^s$ to $H^s$.  Consider any edge $uv$ in $G^s$, and note that necessarily $\textrm{dist}_G(u,v)\le s$.  Let $\textrm{dist}_G(u,v)=i$ and let $u x_1x_2\dots x_{i-1}v$ be a path of length $i$ from $u$ to $v$ in $G$.  Since $\phi$ is a homomorphism from $G$ to $H$, $\phi(u)\phi(x_1)\phi(x_2)\dots\phi(x_{i-1})\phi(v)$ is a walk of length at most $i$ in $H$. Thus $\textrm{dist}_H(\phi(u),\phi(v))\le i\le s$, and $\phi(u)\phi(v)\in E(H^s)$, as desired.

Since $H^s$ is a retract of $G^s$, by the result of Berarducci and Intriglia we have $c(G^s) \ge c(H^s)$.  Additionally, by Theorem \ref{thm:graphpower}, we have $\speeds(G) = c(G^s)$ and $\speeds(H) = c(H^s)$.  So,
$$\speeds(G) = c(G^s) \ge c(H^s) = \speeds(H),$$
which completes the proof.
\end{proof}

In the speed-$(s,s)$ game where $s\ge 2$, all players are allowed to move faster than in the speed-$(1,1)$ game.  If each edge of $G$ were replaced with a path of length $s$, then it would take each player a full turn to move from vertex to vertex, as is the case in the original game.  Hence, one might suspect that the speed-$(1,1)$ game on this new graph behaves very similarly to the speed-$(s,s)$ game on $G$.  This is in fact the case, as we show next.

\begin{definition}
Given a graph $G$ and positive integer $s$, define $G^{(s)}$ to be the graph obtained from $G$ by subdividing each edge $s-1$ times.  The \textit{branch vertices} of $G^{(s)}$ are those vertices that are also present $G$; the \textit{subdivision vertices} are those vertices that were introduced in the course of subdividing edges.  Given adjacent vertices $u$ and $v$ in $G$, we say that a subdivision vertex is \textit{between} the branch vertices $u$ and $v$ in $G^{(s)}$ if it was introduced by subdividing edge $uv$.
\end{definition}

Intuitively, $G^{(s)}$ is obtained from $G$ by replacing each edge of $G$ with a path of length $s$.  For any $u,v \in V(G)$, we thus have $\textrm{dist}_{G^{(s)}} = s\cdot \textrm{dist}_G(u,v)$.  As previously mentioned, one might suspect that the speed-$(s,s)$ game played on $G$ is equivalent to the original game played on $G^{(s)}$.  This is nearly the case -- but not quite. 

\begin{theorem}\label{thm:subdivision}
    For every graph $G$ and positive integer $s$, we have $c(G)\le \speeds(G^{(s)})\le c(G)+1$.
\end{theorem}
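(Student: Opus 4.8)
The plan is to establish the two inequalities separately, thinking of the game on $G^{(s)}$ as the speed-$(1,1)$ game and relating it to the speed-$(s,s)$ game on $G$ via a simulation argument in each direction.

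For the lower bound $c(G) \le \speeds(G^{(s)})$, I would show that $G$ is a retract of $\bigl(G^{(s)}\bigr)^s$ and then invoke Theorem \ref{thm:graphpower} together with the Berarducci--Intriglia retract monotonicity. The natural retraction $\phi \colon V\bigl(G^{(s)}\bigr) \to V(G)$ sends each branch vertex to itself and each subdivision vertex between branch vertices $u$ and $v$ to whichever of $u, v$ is closer (breaking ties arbitrarily, say toward $u$). One then checks that if $x$ and $y$ are at distance at most $s$ in $G^{(s)}$, then $\phi(x)$ and $\phi(y)$ are adjacent in $G$: since any two vertices on the same subdivided edge or on two edges sharing a branch vertex are handled directly, and a path of length at most $s$ in $G^{(s)}$ can traverse at most one full subdivided edge, the images land in $N_G[\,\cdot\,]$ of a common branch vertex. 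Hence $G$ is a retract of $\bigl(G^{(s)}\bigr)^s$, so $c(G) \le c\bigl((G^{(s)})^s\bigr) = \speeds(G^{(s)})$.

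For the upper bound $\speeds(G^{(s)}) \le c(G)+1$, I would give an explicit strategy for $c(G)+1$ cops in the speed-$(s,s)$ game on $G^{(s)}$, simulating a winning strategy for $c(G)$ cops in the ordinary game on $G$. The idea: designate $c(G)$ ``primary'' cops who will always sit on branch vertices and mimic the $G$-strategy step for step (one step in $G$ corresponds to moving across one length-$s$ path, which is exactly a speed-$s$ move in $G^{(s)}$), plus one ``extra'' cop whose job is to pin down the robber once he is cornered. The subtlety---the reason we cannot get equality---is that the robber in $G^{(s)}$ may linger on subdivision vertices: when the primary cops have forced the robber's ``shadow'' in $G$ onto the cops' vertex, the actual robber in $G^{(s)}$ may be sitting somewhere in the interior of an edge, out of reach. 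I would argue that once the simulation reaches this state, the robber is confined: if the robber is between branch vertices $u$ and $v$, and a primary cop sits at (say) $u$ with the $G$-shadow strategy preventing the robber from ever reaching the branch vertex set near $u$'s side, then the robber is trapped on the path between $u$ and $v$, and at most one additional vertex of $G$ is reachable by him. The extra cop then moves onto this path and closes in; since the path has length $s$ and the cop also has speed $s$, a short chase (bounded independent of the graph) finishes the capture.

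The main obstacle I anticipate is making the upper-bound simulation fully rigorous, specifically the bookkeeping of the robber's ``shadow'' in $G$ and verifying that the shadow strategy is well-defined even when the real robber sits on subdivision vertices and thus occupies no vertex of $G$ at all. One must specify how the shadow is updated on turns where the robber does not reach a branch vertex, confirm that the shadow's moves are legal in $G$ (i.e., that the robber's speed-$s$ moves in $G^{(s)}$ project to speed-$\le 1$ moves of the shadow in $G$), and check that the primary cops' moves---dictated by the $G$-strategy against the shadow---never require passing through a vertex occupied by the robber in $G^{(s)}$, or else argue that such a situation already constitutes a capture. Handling the passing-through restriction carefully, together with pinning down the constant number of extra rounds the final cop needs, is where the real work lies; the lower bound, by contrast, should be a short retract computation.
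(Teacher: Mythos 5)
Your lower bound is where the serious problem lies: the claim that $G$ is a retract of $\bigl(G^{(s)}\bigr)^s$ is false in general, so this half of the argument does not go through. Take $s=2$ and $G=K_{3,3}$. Every subdivision vertex is the midpoint of its edge, so your tie-breaking rule is doing all the work --- and in fact no tie-breaking rule, indeed no retraction at all, can succeed. Any retraction $\phi$ from $\bigl(G^{(2)}\bigr)^2$ to $G$ fixes branch vertices, and the midpoint $m_{uv}$ of edge $uv$ is adjacent in $\bigl(G^{(2)}\bigr)^2$ to both $u$ and $v$, so $\phi(m_{uv}) \in N_G[u]\cap N_G[v] = \{u,v\}$ since $K_{3,3}$ is triangle-free. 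Moreover, the midpoints of two edges sharing an endpoint $v$ are at distance $2$ in $G^{(2)}$, hence adjacent in $\bigl(G^{(2)}\bigr)^2$; if both midpoints were sent to their endpoints other than $v$, the images would be two distinct, nonadjacent vertices of the part opposite $v$, which is forbidden. So charge each edge to the endpoint its midpoint is \emph{not} sent to; each vertex is charged at most once, forcing $|E(G)|\le |V(G)|$, i.e.\ $9\le 6$, a contradiction. (For odd $s$ your map does work, since the two ``far halves'' of adjacent edges sum to more than $s$; the failure is specific to even $s$, which includes the main case $s=2$.) The paper avoids retractions entirely here: the robber simulates his winning evasion strategy from the speed-$(1,1)$ game on $G$ by staying on branch vertices and moving a full $s$ steps each round, projecting each cop onto the branch vertex she most recently passed through.

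Your upper bound has the right architecture ($c(G)$ cops running a shadow simulation plus one extra cop), but the endgame as described does not work: once the shadow is captured, a robber sitting on a subdivision vertex between $u$ and $v$ is not ``trapped on the path between $u$ and $v$'' --- on his next turn he can pass through the unguarded endpoint onto a fresh edge and keep running, and a single speed-$s$ cop entering a length-$s$ subdivided edge from one end merely pushes him out the other. The paper's fix is that after the first shadow-capture the capturing cop abandons the simulation and simply takes $s$ steps toward the robber every turn (which prevents the robber from ever ending a turn on, or passing through, that cop's current branch vertex), while the other $c(G)$ cops restart the imagined game on $G$ with the robber placed at the far endpoint $w$; when the shadow is captured a second time the robber is pinned on a subdivided edge with both endpoints occupied and is taken on the following turn. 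You would need this second round of simulation, or something equivalent, to close your argument.
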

\begin{proof}
    To show the lower bound, let $k$ speed-$s$ cops play on $G^{(s)}$, where $k<c(G)$.   Since $k < c(G)$, a speed-$1$ robber can evade $k$ speed-$1$ cops on $G$; we show how a speed-$s$ robber can mimic this strategy on $G^{(s)}$ to evade $k$ speed-$s$ cops.
    
    The robber will imagine that he is playing against $k$ cops on $G$, and he will use a winning strategy in the speed-$(1,1)$ game on $G$ to guide his play on $G^{(s)}$.  Throughout the game on $G^{(s)}$, the robber will remain solely on branch vertices.  He will begin the game on the branch vertex in $G^{(s)}$ dictated by his winning strategy on $G$ in the speed-$(1,1)$ game.  Henceforth, in each round he will move a full $s$ steps in order to end his turn on a branch vertex.  Specifically, if the robber's strategy on $G$ prescribes that he should move along edge $xy$ in $G$, then in $G^{(s)}$ the robber will move from $x$ along the subdivided edge (which is a path of length $s$) to $y$.  If a cop occupies a branch vertex $u$ in $G^{(s)}$, then the robber plays as if that cop occupies the vertex $u$ in $G$; if a cop occupies a subdivision vertex in $G^{(s)}$ between branch vertices $u$ and $v$, then the robber plays as if the cop occupies whichever vertex in $\{u,v\}$ the cop passed through most recently.  (If a cop begins the game on a subdivision vertex, then the robber arbitrarily chooses one of the corresponding branch vertices and pretends that the cop occupies that vertex.)
    
    The robber interprets the cop moves in $G^{(s)}$ and translates them to moves in the imagined game as follows.
    \begin{itemize}
        \item 
            If a cop moves from a branch vertex $u$ to a branch vertex $v$ along the subdivided edge between them in $G^{(s)}$, then the robber imagines that the cop moves from $u$ to $v$ in $G$.  %Clearly, this is a legal move in $G$, as the existence of a subdivided edge between $u$ and $v$ in  $G^{(s)}$ requires the existence of edge $uv$ in $G$.
        \item
            If a cop moves from a branch vertex $u$ to a subdivision vertex in $G^{(s)}$, then the robber imagines that the cop stays on the vertex $u$ in $G$.
        \item
            If the cop moves from a subdivision vertex through the branch vertex $u$ to another subdivision vertex, then the robber imagines that the cop moves to or stays on $u$ in $G$.
        \item
            Finally, if a cop moves from a subdivision vertex to a subdivision vertex but does not travel through a branch vertex, then the robber imagines that the cop stays put in $G$.
    \end{itemize}
%    Note that if the cop occupies a subdivision vertex between branch vertices $u$ and $v$ in $G^{(s)}$, then the only branch vertices the cop can reach on her turn are $u$ and $v$.  If the cop occupied $u$, then she would instead be able to reach all branch vertices corresponding to neighbors of $u$ in $G$.  Therefore, a cop occupying a subdivision vertex in $G^{(s)}$ can only prove to be less restrictive for the robber than a cop occupying the branch vertex that she passed through most recently.
    
    On each of the robber's turns, he moves to the branch vertex prescribed by his winning strategy against $k$ cops in $G$.  Since the robber can indefinitely evade $k$ speed-$1$ cops in the imagined game, he can indefinitely evade $k$ cops in the speed-$(s,s)$ game on $G^{(s)}$.
    %\textcolor{blue}{Do I need any more justification here?}

    For the upper bound, we show how $c(G)+1$ cops can capture a robber on $G^{(s)}$.  To do this, the cops will imagine a speed-$(1,1)$ game on $G$ and use a winning strategy for $c(G)$ cops in that game to capture the robber on $G^{(s)}$.

    Let $k=c(G)$, and label the cops $C_1, \dots C_k, C_{k+1}$.  Cops $C_1, \dots, C_k$ will mimic a winning cop strategy on $G$; cop $C_{k+1}$ will (for the moment) move arbitrarily.  For $1 \le i \le k$, if $C_i$ begins on vertex $v$ in the imagined game on $G$, then $C_i$ will begin on the branch vertex $v$ in $G^{(s)}$.  In response, the robber chooses a starting vertex in $G^{(s)}$.  Throughout the game, the cops will ensure that if the robber occupies a branch vertex $v$ in $G^{(s)}$, then he occupies $v$ in the imagined game on $G$; if instead the robber occupies a subdivision vertex between branch vertices $w$ and $x$ in $G^{(s)}$, then he must occupy either $w$ or $x$ in $G$.  To this end, if the robber begins on some branch vertex $w$, then the cops imagine that he occupies $w$ in $G$; if instead he begins on a subdivision vertex between branch vertices $w$ and $x$, then the cops arbitrarily choose either $w$ or $x$ and imagine that the robber occupies that vertex in $G$.

    Henceforth, each of the cops $C_1, \dots, C_k$ uses a winning strategy for $k$ cops in $G$ to guide their play in $G^{(s)}$.  In particular, if the cops' strategy on $G$ calls for $C_i$ to move from vertex $v$ to vertex $w$ in $G$, then $C_i$ moves from $v$ to $w$ in $G^{(s)}$; note that this move must be valid since $v$ and $w$ are distance $s$ apart in $G^{(s)}$.  In this way, cops $C_1, \dots, C_k$ always occupy the same vertices in the game on $G^{(s)}$ as they occupy in the imagined game on $G$.  When the robber moves in $G^{(s)}$, the cops imagine that he moves in the imagined game as follows:
    \begin{itemize}
    \item If the robber does not move to or through a branch vertex in $G^{(s)}$, then the cops imagine that he stays put in $G$.  This is clearly a legal robber move in $G$, and it clearly maintains the condition that if the robber occupies a subdivision vertex in $G^{(s)}$, then he occupies one of the corresponding branch vertices in $G$. 
    
    \item If the robber does move to or through a branch vertex $v$ in $G^{(s)}$, then the cops imagine that he moves to $v$ in $G$.  If the robber began on $v$ in $G^{(s)}$, then he began on $v$ in $G$, so his imagined move in $G$ is legal.  If the robber began on a subdivision vertex in $G^{(s)}$, then he moves to $v$ in $G$.  Since the branch vertices are distance $s$ apart in $G^{(s)}$, the robber can only pass through one branch vertex on a turn, so he must have begun his turn in $G$ on a vertex in $N[v]$; hence his imagined move to $v$ is legal.  Finally, if he began his turn on some other branch vertex $w$ in $G^{(s)}$, then he must have moved from $w$ to $v$ in $G^{(s)}$; consequently the distance in $G^{(s)}$ between $w$ and $v$ must be at most $s$, so we must have $vw \in E(G)$, hence the robber's move from $w$ to $v$ in $G$ is legal.  
    \end{itemize}
    
    Eventually, some cop $C$ captures the robber in the imagined game on $G$. If the robber occupies the corresponding branch vertex $v$ in $G^{(s)}$, then $C$ has captured the robber in $G^{(s)}$ and the cops have won.  Otherwise, the robber must occupy a subdivision vertex between $v$ and some other branch vertex $w$.  For the remainder of the game on $G^{(s)}$, cop $C$ takes $s$ steps toward the robber on each turn.  The remaining $k$ cops imagine that a new game has begun on $G$, in which the robber occupies vertex $w$.  As before, they imitate, on $G^{(s)}$, a winning strategy from $G$.  If the robber ever ends his turn on a branch vertex of $G^{(s)}$, then cop $C$ will capture him.  Moreover, the robber can never move to or through the branch vertex occupied by $C$.  Consequently, cop $C$ will always occupy a branch vertex $x$ in $G^{(s)}$, while in the imagined game on $G$, the robber will occupy some neighbor $y$ of $x$.  Since the cops are following a winning strategy for the imagined game on  $G$, some cop $C'$ eventually captures the robber in that game.  At this point, it must be that the robber occupies a subdivision vertex in $G^{(s)}$ between some branch vertices $x$ and $y$, while cops $C$ and $C'$ occupy both $x$ and $y$.  The robber cannot pass through $x$ or $y$ on his next turn, so on the ensuing cop turn, cop $C$ can capture him.
\end{proof}

We remark that both inequalities in Theorem \ref{thm:subdivision} are tight.  To see that the lower bound is tight, note that if $G$ is a tree, then so is $G^{(s)}$; hence $c(G) = c(G^{(s)}) = 1$.  Showing that the upper bound is tight requires a bit more work.

% \begin{proposition}
% \begin{enumerate}
% \item [\textbf{(a)}] For any positive integers $n$ and $s$ where $n\ge 3$ and $s\ge 2$, we have $c(K_n)=1$ and $\speeds(K_n^{(s)}) = 2$.
% \item [\textbf{(b)}] *** ADD A FAMILY OF EXAMPLES WHERE WE HAVE EQUALITY, PREFERABLY FOR ALL POSSIBLE COP NUMBERS ***
% \end{enumerate}
% \end{proposition}
\begin{proposition}\label{prop:complete_subdivision}
For any positive integers $n$ and $s$ where $n\ge 3$ and $s\ge 2$, we have $c(K_n)=1$ and $\speeds(K_n^{(s)}) = 2$.
\end{proposition}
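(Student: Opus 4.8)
The plan is to pin down $\speeds(K_n^{(s)})$ between $1$ and $2$ using Theorem~\ref{thm:subdivision}, and then rule out the value $1$ by exhibiting a retract on which a single cop already fails. The equality $c(K_n)=1$ is immediate, since a lone cop on the complete graph $K_n$ simply moves onto the robber's vertex on her first turn. Applying Theorem~\ref{thm:subdivision} with $G=K_n$ then yields
\[
1=c(K_n)\le \speeds(K_n^{(s)})\le c(K_n)+1=2,
\]
so it remains only to prove $\speeds(K_n^{(s)})\ge 2$, i.e.\ that one speed-$s$ cop cannot capture a speed-$s$ robber on $K_n^{(s)}$.

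For the lower bound I would show that $K_3^{(s)}$ is a retract of $K_n^{(s)}$ and then invoke Theorem~\ref{thm:retract}. Let $b_1,\dots,b_n$ be the branch vertices of $K_n^{(s)}$, and regard $K_3^{(s)}$ as the subgraph induced by $b_1,b_2,b_3$ together with the subdivision vertices on the subdivided edges $b_1b_2$, $b_1b_3$, and $b_2b_3$. Define $\phi\colon V(K_n^{(s)})\to V(K_3^{(s)})$ to be the identity on $V(K_3^{(s)})$; to send $b_m$ and every subdivision vertex of a subdivided edge of the form $b_1b_m$ or $b_mb_{m'}$ (for $m,m'\ge 4$) to $b_1$; and, for each $m\ge 4$, to \emph{fold} the subdivided path from $b_2$ to $b_m$ onto the subdivided path from $b_2$ to $b_1$ by sending the vertex at distance $t$ from $b_2$ on the former to the vertex at distance $t$ from $b_2$ on the latter (so in particular $b_m\mapsto b_1$, consistently with the previous clause), and likewise fold the subdivided path from $b_3$ to $b_m$ onto the one from $b_3$ to $b_1$. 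This $\phi$ is well defined (the relevant subdivision vertices each lie on a unique subdivided edge) and fixes $K_3^{(s)}$ pointwise by construction, so the crux is verifying that $\phi$ is a homomorphism, which is a routine case analysis over the edges of $K_n^{(s)}$: an edge lying on a subdivided triangle edge is preserved by the identity; an edge on a collapsed path maps to a loop at $b_1$ (legal because all graphs here are reflexive); an edge on a folded path maps to consecutive edges of $b_1b_2$ or $b_1b_3$; and for each of $b_1,b_2,b_3$ and each $b_m$ one checks that every neighbor maps into the closed neighborhood of its image. I expect this verification — mostly the bookkeeping of which path is being folded onto which, and in which orientation — to be the fussiest part of the argument, though it is entirely elementary.

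It then remains to observe that $K_3^{(s)}$ is exactly the cycle $C_{3s}$, and to show $\speeds(C_m)\ge 2$ whenever $m\ge 2s+2$; since $s\ge 2$ we have $3s\ge 2s+2$, so this applies. For this it suffices to describe a strategy by which a speed-$s$ robber on $C_m$ keeps cyclic distance at least $s+1$ from a single speed-$s$ cop: he starts at a vertex at distance $\lfloor m/2\rfloor\ge s+1$ from the cop, and after each cop move the gap is at least $(s+1)-s=1$, so if the gap is still at least $s+1$ he stays put, while if the gap is some $d$ with $1\le d\le s$ he moves $s+1-d\le s$ steps around the cycle directly away from the cop, which avoids the cop's vertex and restores distance exactly $s+1$ (here $s+1\le\lfloor m/2\rfloor$ guarantees that moving away strictly increases the distance throughout). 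Hence one cop can never catch him, so $\speeds(C_{3s})\ge 2$. Chaining everything together, $\speeds(K_n^{(s)})\ge\speeds(K_3^{(s)})=\speeds(C_{3s})\ge 2$, and combined with the upper bound this gives $\speeds(K_n^{(s)})=2$.
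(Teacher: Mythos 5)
Your proposal is correct and follows essentially the same route as the paper: the upper bound via Theorem~\ref{thm:subdivision}, the lower bound by retracting $K_n^{(s)}$ onto $K_3^{(s)}\cong C_{3s}$ and invoking Theorem~\ref{thm:retract}, and the same distance-$(s+1)$ evasion strategy on the cycle. The only difference is that you collapse all branch vertices $b_4,\dots,b_n$ onto $b_1$ in a single folding map, whereas the paper retracts $K_n^{(s)}$ onto $K_{n-1}^{(s)}$ one branch vertex at a time and iterates; both verifications are routine and your one-step map is well defined for the reason you give.
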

\begin{proof}
It is clear that $c(K_n) = 1$, so by Theorem \ref{thm:subdivision}, we need only show that $\speeds(K_n^{(s)}) > 1$.  Toward this end, we begin by showing that for any $n\ge 4$, the graph $K_{n-1}^{(s)}$ is a retract of $K_n^{(s)}$; this will allow us to reduce to the case where $n=3$.  Let $G = K_n^{(s)}$, fix any two branch vertices $u$ and $v$ in $G$, and let $H = G-u$; note that this implies $H = K_{n-1}^{(s)}$.  For any other branch vertex $w$ in $G$, let the subdivision vertices corresponding to edge $uw$ be $(uw)_1, (uw)_2, \dots, (uw)_{s-1}$, in order, with $(uw)_1$ adjacent to $u$ and $(uw)_{s-1}$ adjacent to $w$; likewise, let the subdivision vertices corresponding to edge $vw$ be $(vw)_1, (vw)_2, \dots, (vw)_{s-1}$, in order, with $(vw)_1$ adjacent to $v$ and $(vw)_{s-1}$ adjacent to $w$.  Define the map $\phi: V(G)\to V(H)$ as follows:
\begin{enumerate}[label=(\alph*)]
    \item
        $\phi(u)=v$;
    \item
        $\phi(x) = v$ for every subdivision vertex $x$ between $u$ and $v$;
    \item
        $\phi((uw)_i) = (vw)_i$ for every branch vertex $w$ with $w \not \in \{u,v\}$ and for all $i \in \{1, \dots, s-1\}$;
    \item
        $\phi(x)=x$ for all other vertices $x$.
\end{enumerate}

We claim that $\phi$ is a retraction.  Property (d) guarantees that $\phi$ is the identity map on $H$, so we need only show that $\phi$ is a homomorphism.  Let $xy \in E(G)$.  If $x,y \in V(H)$, then $\phi(x)\phi(y) = xy \in E(H)$, so we need only consider edges for which at least one endpoint does not belong to $G$.  Thus, we may assume without loss of generality that $x$ is a subdivision vertex between $u$ and some other branch vertex of $G$.  If $x$ lies between $u$ and $v$, then both $x$ and $y$ map to $v$, so the edge $xy$ is preserved under $\phi$.  (Recall that we are working only with reflexive graphs, so $vv \in E(H)$.)  Suppose instead that $x$ lies between $u$ and $w$ for some branch vertex $w$ with $w \not\in \{u,v\}$.  The edge $xy$ must belong to the path $u(uw)_1(uw)_2\dots (uw)_{s-1}w$ in $G$; this path maps, under $\phi$, to $v(vw)_1(vw)_2\dots (vw)_{s-1}w$, so once again edge $xy$ is preserved.  Since $\phi$ preserves all adjacencies it is a homomorphism and hence a retraction, as claimed.

Repeated application of the argument above shows that $K_3^{(s)}$ is a retract of $K_n^{(s)}$, so Theorem \ref{thm:retract} implies that $\speeds(K_n^{(s)}) \ge \speeds(K_{3}^{(s)})$ for all $n\ge 3$.  To complete the proof, it now suffices to show that $\speeds(K_n^{(s)}) > 1$.  

Note that $K_3^{(s)}\cong C_{3s}$.  Once the cop places herself on a vertex of the graph, the robber chooses to occupy a vertex at distance $s+1$ from the cop.  This ensures that the robber cannot be caught on the first cop turn.  If the cop's move puts her on a vertex at distance $s+1$ or more from the robber, then the robber remains at his current vertex and thus avoids capture on the ensuing cop turn.  Otherwise, the cop ends her turn at a vertex that is distance $x$ from the cop, where $1\le x\le s$.  The robber can then move $s+1-x$ steps around the cycle away from the cop to land on a vertex that is distance $s+1$ from the cop and, once again, avoid capture on the ensuing cop turn.  In this way, the robber can evade the cop indefinitely.
\end{proof}

Although Proposition \ref{prop:complete_subdivision} shows that we can have $\speeds(G^{(s)}) = c(G)+1$ when $c(G) = 1$, we are unaware of any graphs $G$ with  $\speeds(G^{(s)}) = c(G)+1$ and $c(G) \ge 2$. 
\\
%This observation fuels our first conjecture, which we will discuss further in Section \ref{sec:open_questions}.\comment{We don't want to say ``this will be discussed in Section whatever''; just state the conjecture and move on.  More importantly, I don't know that we want to conjecture this.  We can present it as an open problem, but I don't know about making it a conjectuve.  We actually might not even want to mention it here; we can save it for the end of the paper.  I worry about drawing too much attention to things we don't know.}
%\begin{conjecture} \label{conj:subdivision_copnumber}
%    If $c(G)>1$, then $c_{s,s}(G^{(s)})=c(G)$.
%\end{conjecture}

For our next result, we establish a relationship between the speed-$(s,s)$ game and a well-studied variant of Cops and Robbers originally introduced by Aigner and Fromme \cite{AF84}.  The \textit{active} variant is played identically to the original game except that on every cop turn, at least one cop must move to a neighboring vertex, and on every robber turn, the robber must move to a neighboring vertex.  (That is, the cops cannot all remain in place on a cop turn, and the robber cannot remain in place on a robber turn.)  The minimum number of cops needed to capture a robber on a graph $G$ in this variant of the game is denoted by $c'(G)$.  To facilitate the proof, we actually consider yet \textit{another} variant of Cops and Robbers: the \textit{semi-active} variant is the same as the active variant, except that we no longer require any cops to move on a cop turn (though the robber is still required to move on every robber turn).  We denote the minimum number of cops needed to capture a robber on $G$ in the semi-active variant by $c^-(G)$.  

\begin{theorem}\label{thm:speed_s_active}
For every graph $G$ and for every $s \ge 2$, we have $\speed{s}(G) \le c^-(G) \le c'(G)$.
\end{theorem}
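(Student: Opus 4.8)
The inequality $c^-(G) \le c'(G)$ is immediate: any winning strategy for $c'(G)$ cops in the active variant is also a valid strategy in the semi-active variant, since the semi-active rules only relax the constraint on the cops (the robber is still forced to move). So the plan is to focus entirely on proving $\speed{s}(G) \le c^-(G)$, and the natural approach is a strategy-stealing argument: given a winning strategy for $k = c^-(G)$ cops in the semi-active game on $G$, I would show that $k$ speed-$s$ cops can win the speed-$(s,s)$ game on $G$. Equivalently, by Theorem~\ref{thm:graphpower}, I could instead show $c(G^s) \le c^-(G)$, i.e.\ that $k$ ordinary speed-$1$ cops can win on $G^s$; I expect the cleaner writeup uses whichever of these two viewpoints makes the "robber is forced to move" hypothesis easiest to exploit.

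The key idea is that in the semi-active game, the robber \emph{must} move to a (genuine) neighbor on every turn, so he can never simply sit on a vertex. I would have the speed-$(s,s)$ cops simulate a semi-active game on $G$: they maintain the invariant that the imagined robber position equals the real robber position, they move exactly as the semi-active strategy dictates (each such move is legal for a speed-$s$ cop, since it is a single step in $G$), and they translate the real robber's speed-$s$ move into an imagined semi-active robber move. The subtlety is this translation: the real robber may traverse up to $s$ edges of $G$, which in the imagined game on $G$ is not a single legal move. The resolution is that we only need the \emph{cops} to be running the semi-active strategy legitimately; the robber in the imagined game can be imagined to make a sequence of moves, or — better — we view it from the $G^s$ side, where the real robber's move \emph{is} a single edge of $G^s$. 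I would argue: in $G^s$, a robber move along edge $uv$ corresponds to a path $u = x_0 x_1 \cdots x_i = v$ in $G$ with $1 \le i \le s$; since $G$ is reflexive this is a walk, but because the semi-active robber is forced off his vertex, any robber who would "pass" in $G^s$ (stay at $u$) can instead be imagined to take one real step in $G$ and come back — the point being that the reflexive loop at $u$ in $G^s$, or equivalently a stalling move, is never \emph{forced} on the real robber but \emph{is} forbidden to the imagined semi-active robber. The argument must show the cops never need the imagined robber to stall.

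Here is the cleanest route I would actually write. The cops simulate the semi-active game on $G$. When the real robber moves from $u$ to $v$ in the speed-$(s,s)$ game along a path $u x_1 \cdots x_{i-1} v$ with $1 \le i \le s$, the cops imagine the robber simply walks that same path in $G$ — but since the imagined game on $G$ gives the robber only one move, we instead observe that what the cops truly require is only that, at the start of each cop turn, the imagined robber position coincides with the real robber position \emph{and} the history is consistent with \emph{some} legal semi-active robber play. If $v \ne u$, the cops imagine the robber took the single step from $x_{i-1}$ to $v$ (legal, a genuine edge), having somehow been at $x_{i-1}$ — and we patch the history by noting the robber could have reached $x_{i-1}$ legally earlier; the forced-move hypothesis guarantees $v \ne u$ whenever the semi-active strategy's bookkeeping would otherwise demand a stall, because in the \emph{semi-active} game the robber is never allowed to stall. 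The main obstacle, and the step I would spend the most care on, is exactly this: making the history-patching rigorous without circularity — showing that the cops can always maintain a consistent imagined semi-active robber trajectory matching the real positions at cop-turn boundaries. I expect this is handled by processing the real robber's $s$-edge path one edge at a time and interleaving imagined cop "stalls" (allowed in semi-active!) so that each imagined robber move is a single real edge; since the real robber traverses $\ge 1$ real edge per turn, there is always at least one genuine edge to feed the simulation, which is precisely why semi-active (not fully active) is the right hypothesis and why the bound fails for the ordinary game. Once the simulation is set up, the semi-active strategy eventually places a cop on the imagined robber's vertex, which by the invariant is the real robber's vertex, so the speed-$(s,s)$ cops win.
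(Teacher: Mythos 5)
Your reduction $c^-(G)\le c'(G)$ is fine, and the simulation you sketch midway through --- decompose the robber's speed-$s$ move into single edges of $G$ and have the cops answer each edge with one round of their semi-active strategy --- is exactly the paper's argument. The problem is that the ``cleanest route I would actually write'' then abandons this in favor of two things that do not work. First, you assert that ``the real robber traverses $\ge 1$ real edge per turn.'' That is false: in the speed-$(s,s)$ game the robber may remain in place, and this is precisely the case that forces the hypothesis $s\ge 2$. The paper handles it by having the cops imagine that the robber steps to some neighbor $v'$ and immediately steps back, i.e.\ two semi-active rounds; each cop then needs up to \emph{two} real steps to execute her two one-step responses, which is legal only because $s\ge 2$. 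You mention this out-and-back trick in passing but then drop it, and nowhere in your proposal is $s\ge 2$ actually used --- a sure sign the stalling case is left unhandled. (Your parenthetical that ``the bound fails for the ordinary game'' is also wrong: $\speeds(G)\le c(G)$ does hold, as a corollary of Theorem~\ref{thm:speeds_factors_monotone}; the point of using the semi-active game is only that $c^-(G)$ can be smaller than $c(G)$.)

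Second, the ``history-patching'' device --- imagining only the final edge $x_{i-1}v$ and retroactively justifying the robber's presence at $x_{i-1}$ --- is both unnecessary and unworkable. The cops' imagined strategy is a function of the entire imagined game history, so you cannot declare after the fact that the robber ``could have reached $x_{i-1}$ legally earlier'': the imagined robber would have to teleport from $u$ to $x_{i-1}$, and the cops' responses to the never-simulated intervening rounds are undefined. The straightforward simulation avoids all of this: the real robber's move is along a cop-free path $v_0v_1\cdots v_d$ with $d\le s$, each imagined step $v_jv_{j+1}$ is a legal semi-active robber move, the cops answer each with one round of their winning strategy (so each cop takes at most $d\le s$ steps per real turn), and no bookkeeping repair is needed. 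One further small point: the imagined capture may occur in the middle of a simulated block of rounds, when the imagined robber sits at some intermediate $v_j$ rather than at the real robber's final position; the paper closes this by having the capturing cop mirror the imagined robber's remaining steps so that she still coincides with him when the block ends.
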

\begin{proof}
It is clear that $c^-(G) \le c'(G)$, since the cops have more options in the semi-active game than in the active game.  For the other inequality, let $k = c^-(G)$; we give a strategy for $k$ cops to capture a robber in the speed-$(s,s)$ game on $G$.  

Essentially, the cops view each robber move in the speed-$(s,s)$ game as a sequence of moves in the semi-active game and respond to each one in turn.  More formally, the cops imagine that they are playing the semi-active game on $G$ and use a winning strategy in that game to capture the robber in the speed-$(s,s)$ game.  The cops choose their initial positions in the semi-active game according to a winning strategy for that game, and they begin in the same positions in the speed-$(s,s)$ game.  Similarly, once the robber chooses an initial position in the speed-$(s,s)$ game, the cops imagine that he has begun on the same vertex in the semi-active game. In the speed-$(s,s)$ game, the cops skip their first turn, i.e. all cops remain in place; in the imagined semi-active game, they skip their first turn.  (Note that this does not adversely affect their strategy for the semi-active game, since it effectively just forces the robber the opportunity to choose a new starting position.)  

After the robber makes a move in the speed-$(s,s)$ game, the cops respond as follows.  Suppose first that the robber does not remain in place, i.e. that he moves from his current vertex $v$ to a new vertex $w$, and let $v=v_0, v_1, \dots, v_d = w$ be a cop-free path from $v$ to $w$ of length at most $s$.  In the semi-active game, the cops imagine that the robber moves from $v_0$ to $v_1$, after which the cops respond using a winning strategy in the semi-active game.  They then imagine that the robber moves from $v_1$ to $v_2$, and they respond accordingly.  This continues for $d$ rounds, after which point the robber has arrived at $w$ in the semi-active game and the cops have responded to each of his moves in turn.  Note that each cop has moved at most $s$ steps in the semi-active game.  Therefore, in the speed-$(s,s)$ game, each cop can (and does) move to her position in the imagined semi-active game.  If instead the robber chooses to remain in place in the speed-$(s,s)$ game, then the cops play similarly, except that they imagine that, in the semi-active game, the robber moves from $v$ to some neighbor $v'$ and then moves from $v'$ back to $v$ over the course of two turns.  Since $s\ge 2$, each cop can respond to each of these steps in the speed-$s$ game according to their winning strategy in the semi-active game.

Thus, in each round of the speed-$(s,s)$ game, the cops simulate one or more rounds of the semi-active game.  Since the cops follow a winning strategy in the semi-active game, some cop eventually captures the robber.  Henceforth, that cop can mirror the robber's movements, thereby ensuring that after every cop move in the semi-active game, some cop occupies the same vertex as the robber.  Consequently, after the corresponding cop move in the speed-$(s,s)$ game, that cop will likewise occupy the same vertex as the robber; hence, the cops win the speed-$(s,s)$ game. 
\end{proof}

Next, we consider the question of how $\speeds(G)$ can change as $s$ increases.  Trivially, $\speed{1}(G) = c(G)$ for all graphs $G$.  It is also easy to see that when $s \ge \rad(G)$ we have $\speeds(G) = 1$, since a single cop beginning on a central vertex can capture the robber on her first turn.  Thus as $s$ increases, $\speeds(G)$ starts off equal to $c(G)$ -- which may be quite large -- and eventually decreases to 1.  What happens in between?

In the ``long run'' for any graph $G$ -- as $s$ goes from 1 up to $\textrm{rad}(G)$ -- we have that $\speeds(G)$ is nonincreasing in $s$.  While it may be natural to assume that $\speeds$ is monotonically nonincreasing in $s$, proving this appears to be difficult (see Conjecture \ref{conj:monotone}).  However, we next provide a result showing that $\speeds$ is ``mostly'' nonincreasing.

\begin{theorem}\label{thm:speeds_factors_monotone}
For any positive integers $k$ and $s$, we have $\speeds(G) \ge \speed{ks}(G)$.
\end{theorem}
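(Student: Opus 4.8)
The plan is to strip the problem down to the case $s=1$ using Theorem~\ref{thm:graphpower}, and then to prove that case by a simulation argument in the spirit of Theorem~\ref{thm:speed_s_active}. For the reduction: by Theorem~\ref{thm:graphpower} we have $\speeds(G)=c(G^s)$ and $\speed{ks}(G)=c(G^{ks})$; since $G^{ks}=(G^s)^k$, applying Theorem~\ref{thm:graphpower} once more -- this time to the graph $G^s$ with exponent $k$ -- gives $c(G^{ks})=c\bigl((G^s)^k\bigr)=\speed{k}(G^s)$. Writing $H=G^s$, it therefore suffices to prove that $c(H)\ge\speed{k}(H)$ for every (reflexive) graph $H$ and every positive integer $k$; equivalently, $\speed{1}(H)\ge\speed{k}(H)$.

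To prove this, let $m=c(H)$ and fix a winning strategy for $m$ cops in the ordinary game on $H$; I will describe a winning strategy for $m$ cops in the speed-$(k,k)$ game on $H$. The cops maintain an ``imagined'' ordinary game on $H$ in which they follow the fixed winning strategy, and they keep two invariants: after each true cop turn, their true speed-$k$ positions coincide with their positions in the imagined game; and the imagined robber always sits on the true robber's vertex. Initial placements and the first cop response are copied from the winning strategy. Whenever the robber, on a true turn, moves along a cop-free walk $v=p_0,p_1,\dots,p_d=w$ of length $d\le k$, the cops process it one edge at a time: they imagine the robber stepping $p_0\to p_1$ and respond with the winning strategy, then imagine $p_1\to p_2$ and respond, and so on, consuming $d$ rounds of the imagined game. (If the robber stands still, they instead simulate a single imagined round in which the robber also stands still; this is legal since $H$ is reflexive.) Across these rounds each cop moves at most $d\le k$ steps in total, so on the ensuing true cop turn each cop can move directly to her new imagined position, restoring the invariant. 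Since at least one imagined round is played per true round and the cops follow a winning strategy, in the absence of a breakdown the imagined -- hence the true -- robber is captured on some true cop turn after finitely many true rounds.

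The subtle point, and the step I expect to be the main obstacle, is that the simulation can break down \emph{in the middle} of processing a true robber move: after some intermediate step the winning strategy might send a cop $C$ onto the imagined robber's current vertex $p_j$ with $j<d$ (a ``premature capture''), or onto the vertex $p_j$ the imagined robber is about to step to (``blocking''), so that the imagined game has no legal continuation even though the true robber already sits at $p_d=w$. The fix is a distance estimate: $C$ reached $p_j$ from her true position $q$ (her location throughout this true robber turn, by the invariant) using at most $j$ imagined cop turns of one step each, so $\dist_H(q,p_j)\le j$; since $p_j$ and $w$ lie on a common walk of length $d$ we also have $\dist_H(p_j,w)\le d-j$, whence $\dist_H(q,w)\le d\le k$. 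Therefore, on the next true cop turn, cop $C$ can move straight from $q$ onto $w$ and capture the true robber. (The blocking case is identical, with the sharper bound $\dist_H(q,p_j)\le j-1$; also note the first imagined step is never blocked, because the true robber's walk is cop-free with respect to the cops' positions at the start of processing.) Combining everything: either the imagined game runs forever, in which case the winning strategy captures the imagined robber on some true cop turn and hence captures the true robber; or the simulation breaks down, in which case the estimate above captures the true robber on the next true cop turn. The remaining details -- legality of each imagined robber step (from cop-freeness of the true walk together with reflexivity) and the bookkeeping aligning imagined rounds with true rounds -- are routine.
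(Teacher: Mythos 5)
Your proof is correct, and its engine is the same as the paper's: interpret one move of the fast robber as several consecutive moves of a slower robber and answer each in turn with a known winning strategy (this is exactly the device the paper borrows from its Theorem~\ref{thm:speed_s_active}). The one genuine difference is your front-end reduction: using Theorem~\ref{thm:graphpower} together with the identity $(G^s)^k = G^{ks}$ (which holds since $\dist_{G^s}(u,v)=\lceil \dist_G(u,v)/s\rceil$, so adjacency in $(G^s)^k$ is equivalent to $\dist_G(u,v)\le ks$), you collapse the statement to $c(H)\ge \speed{k}(H)$ for $H=G^s$. The paper instead runs the simulation directly with speed-$s$ imagined rounds inside a speed-$ks$ true round; your version buys a cleaner bookkeeping (one edge per imagined round) at the cost of invoking the graph-power theorem twice. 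You also spend more care than the paper on the mid-simulation breakdown: the paper disposes of a premature imagined capture by having the capturing cop mirror the robber's remaining steps, which keeps her total displacement within the speed budget and lands her on the robber at the end of the true round; your distance estimate $\dist_H(q,w)\le d\le k$ accomplishes the same thing and is, if anything, more explicit. (Your ``blocking'' case is not actually a breakdown -- in the ordinary game the robber may legally step onto a cop's vertex and simply be captured -- but handling it does no harm.) No gaps.
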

\begin{proof}    
    We will show that $\speeds(G)$ cops can capture the robber in the speed-$(ks,ks)$ game on $G$.  Similarly to the strategy used in Theorem \ref{thm:speed_s_active}, the cops will view the robber's $ks$ steps as $k$ consecutive moves made by a speed-$s$ robber and respond to each according to their strategy in the speed-$(s,s)$ game.  
    
    The cops start on vertices dictated by their winning strategy in the speed-$(s,s)$ game.  After the robber chooses his starting vertex, the cops skip their first turn.  As in the proof of Theorem \ref{thm:speed_s_active}, this does not affect the cop strategy for the speed-$(s,s)$ game, since it essentially gives the robber the opportunity to re-select his starting position, if he wishes.  Consider a robber turn, and suppose that the robber moves to a vertex distance $t$ away (where necessarily $0\le t\le ks$).  If $t<ks$, the cops may interpret this move by assuming that the robber stays put for $ks-t$ turns after taking the initial $t$ steps.  Thus, on every turn, the cops may assume that the robber always takes exactly $ks$ steps on each turn, with some steps consisting of the robber standing still. After the robber moves in the speed-$(ks,ks)$ game, the cops respond as follows.  The cops imagine that the robber's $ks$ steps consist of $k$ consecutive moves in the speed-$(s,s)$ game and respond to each of these moves in turn according to their winning strategy for the speed-$(s,s)$ game.  Since the cops follow a winning strategy in the speed-$(s,s)$ game, some cop eventually captures the robber.  Thereafter, that cop can mirror the robber's movements, ensuring that at the end of every cop move in the speed-$(s,s)$ game, some cop occupies the same vertex as the robber.  Thus, after the corresponding move in the speed-$(ks,ks)$ game, that cop will occupy the same vertex as the robber and will capture him.
\end{proof}

The following corollary follows immediately from Theorem \ref{thm:speeds_factors_monotone} and the fact that $c(G) = \speed{1}(G)$.

\begin{corollary}
For any positive integer $s$, we have $\speeds(G) \le c(G)$.
\end{corollary}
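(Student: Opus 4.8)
The plan is to derive the corollary directly from Theorem \ref{thm:speeds_factors_monotone} by a judicious choice of parameters. Since that theorem gives $\speed{s'}(G) \ge \speed{ks'}(G)$ for all positive integers $k$ and $s'$, I would instantiate it with $s' = 1$ and $k = s$. This yields $\speed{1}(G) \ge \speed{s}(G)$, and combining with the trivial identity $\speed{1}(G) = c(G)$ (noted in the excerpt just before the corollary) gives $\speeds(G) \le c(G)$, as desired.

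The only subtlety worth a sentence is that Theorem \ref{thm:speeds_factors_monotone} as stated quantifies over $k$ and $s$, so one should be slightly careful that the statement covers the case $s' = 1$; it does, since $s'$ ranges over all positive integers. For $s = 1$ the corollary is the trivial equality $c(G) = c(G)$, and for $s \ge 2$ it is the genuine content $\speeds(G) \le c(G)$, which is exactly $\speed{1 \cdot s}(G) \le \speed{1}(G)$ read off from the theorem with the roles renamed.

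I do not anticipate any real obstacle here: the corollary is a one-line specialization, and all the work has already been done in the proof of Theorem \ref{thm:speeds_factors_monotone}. If anything, the ``hard part'' is purely cosmetic — making sure the variable names in the invocation of the theorem do not clash with the statement of the corollary (the theorem uses $s$ for what we are setting to $1$, and $ks$ for what we are calling $s$), so I would phrase the application as ``apply Theorem \ref{thm:speeds_factors_monotone} with $k$ replaced by $s$ and $s$ replaced by $1$.''
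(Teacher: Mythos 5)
Your proposal is correct and matches the paper's argument exactly: the paper likewise derives the corollary immediately from Theorem \ref{thm:speeds_factors_monotone} (instantiated so that the speed-$1$ game is compared with the speed-$s$ game) together with the identity $c(G) = \speed{1}(G)$. The care you take with the variable substitution is sound and requires no further elaboration.
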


Theorem \ref{thm:speeds_factors_monotone} reinforces our earlier observation that as $s$ increases, $\speeds(G)$ decreases, at least gradually.  As mentioned above, we suspect that $\speeds(G)$ is in fact monotone in $s$, but a proof has so far eluded us.

\begin{conjecture}\label{conj:monotone}
For every graph $G$ and integers $s$ and $s'$ with $s < s'$, we have $\speed{s}(G) \ge \speed{s'}(G)$.
\end{conjecture}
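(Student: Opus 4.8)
The plan is to reduce Conjecture~\ref{conj:monotone} to the case of consecutive speeds and then attempt a strategy-stealing argument in the spirit of Theorem~\ref{thm:speeds_factors_monotone}. Since $\speed{s}(G) \ge \speed{s+1}(G)$ for every graph $G$ and every positive integer $s$ would imply $\speed{s}(G) \ge \speed{s'}(G)$ for all $s < s'$ by transitivity, it suffices to show that whenever $k = \speed{s}(G)$ cops win the speed-$(s,s)$ game on $G$, they can also win the speed-$(s+1,s+1)$ game. (Theorem~\ref{thm:speeds_factors_monotone} already covers the case in which $s$ divides $s+1$, which never happens for $s \ge 2$, so the first open instance is $\speed{2}(G) \ge \speed{3}(G)$.) As in the proofs of Theorems~\ref{thm:speed_s_active} and~\ref{thm:speeds_factors_monotone}, the cops would imagine a speed-$(s,s)$ game on $G$, begin there in the positions dictated by their winning strategy, skip their first turn, interpret each robber move in the faster game as one or more robber moves in the imagined slower game, and respond to each in turn.

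The case of a slow robber move is straightforward: if on some turn the robber traverses at most $s$ steps, that move is itself a legal speed-$s$ robber move, so the cops respond with a single move of at most $s \le s+1$ steps according to their winning strategy, keeping the simulation perfectly synchronized. The trouble is a robber move of exactly $s+1$ steps along a cop-free path $v = v_0, v_1, \ldots, v_{s+1} = w$. The natural idea is to split this into a speed-$s$ move from $v_0$ to $v_s$ followed by a speed-$1$ move from $v_s$ to $v_{s+1}$ and have the cops respond to each; but the winning strategy may call for the cops to move up to $s$ steps in response to the first of these and up to $s$ more in response to the second, for a total of up to $2s > s+1$ steps whenever $s \ge 2$. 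There is no evident way for a cop to make a useful ``partial'' response, since the intermediate vertices along her prescribed path carry no guarantee, and maintaining only an approximate image of the robber in the slower game is hopeless as well: the real robber's position moves away from any imagined speed-$s$ image by at least one edge per sprinting turn, so the gap grows without bound.

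\textbf{The main obstacle} is precisely this mismatch: a speed-$(s+1)$ move cannot be decomposed into speed-$s$ moves without the cops overshooting their movement budget, which is exactly why the argument of Theorem~\ref{thm:speeds_factors_monotone} is confined to the divisibility case. One might try to let the cops \emph{bank} unused movement from slow turns and ``catch up'' on the simulation later, but a robber who sprints $s+1$ steps on every turn never affords them any slack, and a partially completed cop response need not be of any strategic use in any case. Overcoming this seems to require either a genuinely amortized or fractional simulation --- in which the cops somehow track the slower game without ever needing to complete two of its rounds in one turn --- or an altogether different idea.

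A complementary approach, via Theorem~\ref{thm:graphpower}, is to prove the equivalent statement $c(G^s) \ge c(G^{s+1})$ directly. Writing $H = G^s$, one has $V(H) = V(G^{s+1})$ and $E(H) \subseteq E(G^{s+1}) \subseteq E(H^2)$, since every pair of vertices at distance at most $s+1$ in $G$ is joined by a path of at most two edges in $G^s$; moreover $c(H) \ge c(H^2)$ by Theorem~\ref{thm:speeds_factors_monotone}. The conjecture would follow from the general claim that $c(H') \le c(H)$ whenever $E(H) \subseteq E(H') \subseteq E(H^2)$, but cop number is \emph{not} monotone under edge addition --- adding an edge to $P_4$ to form $C_4$ raises it from $1$ to $2$ --- and since $G^{s+1}$ is not a subgraph of $G^s$ there is no retraction available to invoke Theorem~\ref{thm:retract}. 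Any successful argument of this type would therefore have to exploit the special structure of the new edges of $G^{s+1}$ --- each of which is shadowed by length-two paths in $G^s$ through vertices lying near the endpoints of a shortest $G$-path --- rather than edge count alone. We expect this structural gap to be the crux, and it is plausibly what has kept the conjecture open.
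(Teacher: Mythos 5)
The statement you were asked to prove is Conjecture~\ref{conj:monotone}, which the paper explicitly leaves open (``we suspect that $\speeds(G)$ is in fact monotone in $s$, but a proof has so far eluded us''), so there is no proof in the paper to compare against, and your proposal does not supply one either: it is a reduction plus a diagnosis of obstacles, not an argument. The gap is therefore total in the formal sense --- after reducing to showing $\speed{s}(G)\ge\speed{s+1}(G)$, you establish nothing beyond the easy case where the robber moves at most $s$ steps.

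That said, your analysis of \emph{why} the statement resists proof is accurate and matches the paper's own implicit assessment. The reduction to consecutive speeds by transitivity is correct; the observation that Theorem~\ref{thm:speeds_factors_monotone} covers only the divisibility case (so the first open instance is $\speed{2}(G)\ge\speed{3}(G)$) is correct; and the identified failure mode of the simulation argument --- that decomposing an $(s+1)$-step robber move into a speed-$s$ move followed by a speed-$1$ move forces the cops to budget up to $2s>s+1$ steps, with no way to bank or amortize against a robber who sprints every turn --- is precisely the obstruction that confines the paper's technique to multiples. Your second route is also correctly analyzed: $E(G^s)\subseteq E(G^{s+1})\subseteq E\bigl((G^s)^2\bigr)$ holds, $c(H)\ge c(H^2)$ follows from Theorem~\ref{thm:speeds_factors_monotone}, but the intermediate sandwich cannot be closed because cop number is not monotone under edge addition (your $P_4$ versus $C_4$ example is valid) and $G^{s+1}$ is not a subgraph of $G^s$, so Theorem~\ref{thm:retract} does not apply. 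In short: you have correctly concluded that the statement is, as far as the paper's toolkit goes, genuinely open, but you should present this as a failed attempt rather than a proof proposal.
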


Another interpretation of Conjecture \ref{conj:monotone} is that for any graph $G$, the sequence $\speed{1}(G), \speed{2}(G), \dots$ is nonincreasing and eventually reaches 1 (since $\speeds(G) = 1$ whenever $s \ge \textrm{rad}(G)$).  We next prove a sort of converse of this statement, namely that any nonincreasing positive integer sequence that eventually reaches 1 can be realized as the sequence $\speed{1}(G), \speed{2}(G), \speed{3}(G), \dots$ for some graph $G$.  We begin with a lemma that provides a useful building block for constructing such a graph $G$.

\begin{lemma} \label{lem:speed_s_graph_building}
Fix positive integers $k$ and $s$, and let $G$ be the $k$-fold strong product of $C_{2s+2}$.  For any positive integer $s'$, we have
$$\speed{s'}(G) = \left\{\begin{array}{ll}k+1, \quad &\text{if } s' \le s\\1, &\text{otherwise}\end{array}\right .$$
\end{lemma}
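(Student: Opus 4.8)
The plan is to handle the two regimes separately, using the graph-power perspective of Theorem~\ref{thm:graphpower}: namely, $\speed{s'}(G) = c(G^{s'})$ where $G$ is the $k$-fold strong product $C_{2s+2}^{\boxtimes k}$. The key structural fact I would exploit is that taking powers interacts nicely with strong products of cycles: for $s' \le s$ one has $(C_{2s+2})^{s'} = C_{2s+2}^{s'}$, a circulant graph on $2s+2$ vertices in which each vertex is adjacent to all but the unique antipodal vertex, and $\left(C_{2s+2}^{\boxtimes k}\right)^{s'}$ should coincide with the strong product of these powers (the Chebyshev/$\ell_\infty$ metric on the product means a vertex of the product power is reachable in $s'$ product-steps iff each coordinate is reachable in $s'$ cycle-steps). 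For $s' \ge s+1$, the power $(C_{2s+2})^{s'}$ is the complete graph $K_{2s+2}$, so $\left(C_{2s+2}^{\boxtimes k}\right)^{s'}$ is itself complete, hence cop-win, giving $\speed{s'}(G)=1$. This disposes of the ``otherwise'' case immediately.

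For the main case $s' \le s$, I would show $c(G^{s'}) = k+1$ by proving matching bounds. For the \emph{lower bound} $c(G^{s'}) \ge k+1$, the cleanest route is a retract/product argument: the graph $C_{2s+2}^{s'}$ (each vertex nonadjacent to exactly one other) has cop number~$2$ — it is, up to relabelling, the cocktail-party-type graph $K_{s+1 \times 2}$ minus a perfect matching's complement, or more simply a graph that is not cop-win because no vertex is ``dominated'' (every closed neighborhood misses the antipode), while two cops suffice. One then argues that the $k$-fold strong product of a graph with cop number~$2$ has cop number at least $k+1$; this can be extracted from the known behavior of cop number under strong products (a robber needs to be ``pinned'' in every coordinate, and in a non-cop-win coordinate one cop cannot finish the job alone, so a careful potential/shadow argument forces at least $k+1$ cops). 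Alternatively, one gives the robber an explicit evasion strategy against $k$ cops directly in the speed-$(s',s')$ game on $C_{2s+2}^{\boxtimes k}$: in each coordinate the cycle $C_{2s+2}$ is ``just barely too big'' for a speed-$s'$ cop (echoing the $K_3^{(s)} \cong C_{3s}$ argument in Proposition~\ref{prop:complete_subdivision}), so by the projection/pigeonhole principle, with only $k$ cops there is always some coordinate in which the robber can shift to stay at $\ell_\infty$-distance at least $s'+1$ from all cops.

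For the \emph{upper bound} $c(G^{s'}) \le k+1$, I would give $k+1$ cops the following strategy in the speed-$(s',s')$ game. Cop~$i$ (for $1 \le i \le k$) is assigned to ``control'' coordinate~$i$: she projects the robber's position to the $i$-th cycle and, since $s' \le s$ means a speed-$s'$ mover on $C_{2s+2}$ can in one turn reach everything but the antipode, she can progressively trap the robber's $i$-th coordinate into a shrinking arc and ultimately force it to lie within distance $s'$ of her $i$-th coordinate forever after, while also keeping her other coordinates pinned to the robber's. Once all $k$ ``coordinate cops'' have locked in, the robber's position in every coordinate is within distance $s'$ of the corresponding cop's coordinate — but that does not yet mean any single cop is within $\ell_\infty$-distance $s'$ in all coordinates simultaneously, which is where cop $k+1$ comes in: she shadows the robber to finish, using the fact that the other cops have collapsed the robber's reachable region. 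The main obstacle, and the step I expect to require the most care, is making this coordinate-by-coordinate ``locking'' argument rigorous: one must show the locks can be achieved \emph{and maintained simultaneously} (a cop pinning coordinate $i$ must not be forced to unlock coordinate $j$), and that once coordinate $i$ is within distance $s'$ it stays there — this is essentially the standard ``shadow strategy'' for products, but the strict inequality $s' \le s$ (versus $s' = s+1$, where $C_{2s+2}^{s'}$ becomes complete and the whole product collapses) is exactly what the argument hinges on, so the threshold behavior must be tracked precisely.
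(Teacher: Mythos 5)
Your handling of the ``otherwise'' case and your observation that powers commute with the strong product (because the product metric is the $\ell_\infty$ maximum of coordinate distances) are both correct and consistent with the paper's diameter argument. The case $s' \le s$, however, has genuine gaps on both sides. For the lower bound, neither of your two routes works as stated. There is no general theorem asserting that the $k$-fold strong product of a graph with cop number $2$ has cop number at least $k+1$; the Neufeld--Nowakowski result is an upper bound only, and lower bounds for strong products do not come for free, so that route still requires an explicit robber strategy. Your explicit alternative --- ``by pigeonhole there is always some coordinate in which the robber can shift to stay at distance at least $s'+1$ from all cops'' --- is false: there are $k$ cops and only $k$ coordinates, so no single coordinate can be guaranteed to be simultaneously bad for every cop (indeed $k$ cops can cover an entire copy of $C_{2s+2}$ in any one coordinate). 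The argument that actually works, and is what the paper does, is a diagonal assignment: the robber dedicates coordinate $i$ to cop $C_i$ and perpetually keeps his $i$th coordinate antipodal (distance $s+1$ in $C_{2s+2}$) to the $i$th coordinate of $C_i$'s position. Since the strong-product distance is the maximum of the coordinate distances, each $C_i$ then sits at distance at least $s+1 > s \ge s'$ from the robber; and the robber can maintain all $k$ antipodal relations simultaneously in a single move because the strong product lets him shift every coordinate by up to $s'$ at once while each cop shifts each coordinate by at most $s'$.

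For the upper bound, the coordinate-locking strategy is exactly the step you flag as delicate, and it does not go through as described: a single cop cannot ``progressively trap the robber's $i$th coordinate into a shrinking arc'' of a cycle --- that is precisely why $c(C_n)=2$ --- and the claim that a speed-$s'$ mover on $C_{2s+2}$ reaches everything but the antipode in one turn is only true when $s'=s$. Moreover, a cop who is pinned to the robber in all coordinates but one and within distance $s'$ in the remaining coordinate has already captured him in the strong product, so the role you assign to cop $k+1$ is not coherent as written. The paper sidesteps all of this by writing $G^{s} = \boxtimes_{i=1}^k C_{2s+2}^{s}$ and invoking Neufeld and Nowakowski's bound $c\left(\boxtimes_{i=1}^k H_i\right) \le \sum_{i=1}^k c(H_i) - k + 1$ together with $c(C_{2s+2}^{s}) = 2$, which immediately gives $k+1$; a self-contained strategy would essentially require reproving that product theorem.
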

\begin{proof}
We view the vertex set of $G$ as the set of ordered $k$-tuples $(x_1, x_2, \dots, x_k)$ where $0 \le x_i \le 2s+1$ for each $i$.  Note that two vertices are adjacent in $G$ if and only if they differ by at most 1 (modulo $2s+2$) in each coordinate.

$G$ has diameter $s+1$, since any two vertices can differ by at most $s+1$ (modulo $2s+2$) in each coordinate.  It follows that when $s' > s$ we have $\speed{s'}(G) = 1$, since a single cop can reach any vertex of $G$ in a single turn. 

Suppose instead that $s' \le s$.  For the upper bound, a result of Neufeld and Nowakowski (\cite{NN98}, Theorem 4.1) implies that for all graphs $H_1, H_2, \dots, H_k$ with $c(H_1) \ge 2$, we have $c(\boxtimes_{i=1}^k H_i) \le \sum_{i=1}^k c(H_i) -k+1$.  It is straightforward to see that $c(C_{2s+2}^s) \ge 2$, since the robber can avoid a single cop on $C_{2s+2}^s$ by always remaining on the unique vertex distance $s+1$ from the cop.  Noting that $G^s = \boxtimes_{i=1}^k C_{2s+2}^s$, we have
$$\speed{s}(G) = c(G^s) = c\left (\boxtimes_{i=1}^k C_{2s+2}^s\right) \le \sum_{i=1}^k c(C_{2s+2}^s) - k + 1 \le 2k-k+1 = k+1.$$

% The second equality above can be justified by showing that $G^s=\boxtimes_{i=1}^k C^{s}_{2s+2}$.  To see this, let $H=\boxtimes_{i=1}^k C^{s}_{2s+2}$ and note that $V(G^s)=V(H)$.  It remains to show that $E(G^s)=E(H)$.  \textcolor{blue}{FINISH THIS}.

For the lower bound, we give a strategy for the robber to evade $k$ cops on $G$.  Label the cops $C_1, C_2, \dots, C_k$, and for all $i,j \in \{1, \dots, k\}$, denote the $j$th coordinate of $C_i$'s initial position by $p_{i,j}$.  The robber begins the game on the vertex $(p_{1,1}+s+1, p_{2,2}+s+1, \dots, p_{k,k}+s+1)$, where coordinates are taken modulo $2s+2$.  This ensures that after the cops move, $C_i$'s new position has a different $i$th coordinate from the robber's position for all $i \in \{1, \dots, k\}$; consequently, the cops cannot capture the robber on their first turn, as each $C_i$ will have a different $i$th coordinate from the robber.  On the robber's turn, he then moves to the vertex whose $i$th coordinate differs by $s+1$ from the $i$th coordinate of $C_i$'s position (with coordinates taken modulo $2s+2$) for all $i \in \{1, \dots, k\}$.  Once again, this leaves the cops unable to capture the robber on their next turn.  The robber can repeat this process indefinitely, thereby perpetually evading the cops.
\end{proof}

Armed with Lemma~\ref{lem:speed_s_graph_building}, we next show that any nonincreasing positive integer sequence that eventually reaches 1 can be realized as the sequence $\speed{1}(G), \speed{2}(G), \speed{3}(G), \dots$ for some graph $G$. 

\begin{theorem} \label{thm:nonincreasing_sequence}
    For every nonincreasing sequence of positive integers $t_1,t_2,\dots$ whose terms eventually reach 1, there exists a graph $G$ such that $\speed{s}(G)=t_s$ for all positive integers $s$.
\end{theorem}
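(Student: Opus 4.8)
The plan is to realize the target sequence by a single finite graph built as a strong product of even cycles, one cycle per ``level'' of the sequence. Assuming $t_1\ge 2$ (if $t_1=1$ the whole sequence is constant $1$ and we take $G=K_1$), I would set $N=t_1-1$ and, for $1\le i\le N$, let $\tau_i=\max\{s\ge 1:t_s\ge i+1\}$; this maximum is finite because $t_1\ge i+1$ and the sequence eventually equals $1$, and $(\tau_i)$ is nonincreasing. The graph would be $G=\boxtimes_{i=1}^{N} C_{2\tau_i+2}$. The first thing to record is the bookkeeping identity $\#\{i:\tau_i\ge s'\}=t_{s'}-1$ for every $s'$, which is immediate from the equivalence $\tau_i\ge s'\iff t_{s'}\ge i+1$ (using monotonicity of $(t_s)$) together with $t_{s'}\le t_1=N+1$. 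By Theorem~\ref{thm:graphpower} it therefore suffices to prove $\speed{s'}(G)=c(G^{s'})=1+\#\{i:\tau_i\ge s'\}$ for all $s'$.

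To compute $c(G^{s'})$ I would use that distance in a strong product is the coordinatewise maximum of distances, so $G^{s'}=\boxtimes_{i=1}^{N} C_{2\tau_i+2}^{s'}$. Fix $s'$ and let $m=\#\{i:\tau_i\ge s'\}$; by monotonicity these are the indices $1,\dots,m$. For $i>m$ we have $s'\ge\tau_i+1=\diam(C_{2\tau_i+2})$, so $C_{2\tau_i+2}^{s'}$ is complete; for $i\le m$ we have $s'\le\tau_i$, so $C_{2\tau_i+2}^{s'}$ is \emph{not} complete, the antipode of any vertex being at distance $\tau_i+1>s'$. Since a strong product with a complete reflexive graph $K_R$ merely replaces each vertex by $R$ mutually adjacent twins, which does not change the cop number, the complete factors (whose strong product is again a complete graph) may be discarded: $c(G^{s'})=c\big(\boxtimes_{i=1}^{m} C_{2\tau_i+2}^{s'}\big)$, and the goal reduces to showing this equals $m+1$.

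For the remaining product I would first establish the auxiliary fact that $c(C_n^{s'})=2$ whenever $n\ge 2s'+2$: a single cop is evaded by the robber permanently occupying the unique antipode of the cop (distance $>s'$ away), while two cops win by a ``squeeze'' — starting antipodally and each advancing $s'$ steps along the cycle toward the robber every turn, they shrink by $2s'$ per round the cyclic arc to which the robber is confined, and the robber can never leave that arc because he must stay at distance $\ge s'+1$ from each cop and so reaching a vertex beyond either cop would require a single move of length more than $s'$; eventually his arc is too small and he is caught. Hence each factor $C_{2\tau_i+2}^{s'}$ with $i\le m$ has cop number $2$, so the Neufeld--Nowakowski bound quoted above gives $c\big(\boxtimes_{i=1}^{m} C_{2\tau_i+2}^{s'}\big)\le 2m-m+1=m+1$ (trivially true when $m\le 1$). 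The matching lower bound is a direct generalization of the robber strategy in Lemma~\ref{lem:speed_s_graph_building}: against $m$ cops $C_1,\dots,C_m$, the robber keeps, after each of his turns, his $i$-th coordinate antipodal in $C_{2\tau_i+2}$ to cop $C_i$'s $i$-th coordinate; since cop $C_i$ changes its $i$-th coordinate by at most $s'$, the robber restores the invariant by changing each of his $m$ coordinates by at most $s'$ (a legal move in the strong product), and no cop can ever occupy his vertex, since cop $C_i$ disagrees with him in coordinate $i$. This gives $c(G^{s'})=m+1=t_{s'}$.

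The one step that calls for real care is the auxiliary bound $c(C_n^{s'})\le 2$ for $n\ge 2s'+2$: one must make the ``squeeze'' rigorous — in particular verify that the robber genuinely cannot slip past a cop on the cycle, and handle the endgame when his arc has fewer than $2s'+2$ vertices. Everything else is routine: the counting identity, the behaviour of powers and of complete factors under strong products, the Neufeld--Nowakowski application, and the multi-cop robber strategy all closely mirror arguments already present in the proof of Lemma~\ref{lem:speed_s_graph_building}.
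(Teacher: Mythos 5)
Your proposal is correct, and it takes a genuinely different route from the paper. The paper realizes the sequence by building one block $H_k$ for each ``drop'' in the sequence (each block being a strong product of \emph{equal-length} cycles, handled by Lemma~\ref{lem:speed_s_graph_building}) and then gluing all the blocks to a single hub vertex; the value $t_s$ emerges as the maximum of $\speeds(H_k)$ over the blocks, and the work goes into showing that the hub construction really does compute this maximum (cops must guard the hub, the retraction onto each block gives the lower bound). You instead take a single strong product $\boxtimes_{i=1}^{t_1-1} C_{2\tau_i+2}$ of cycles of \emph{varying} lengths, and the value $t_{s'}$ emerges because exactly the factors with $\tau_i\ge s'$ survive as non-complete factors of $G^{s'}$, each contributing $+1$ to the cop number via the Neufeld--Nowakowski bound on one side and the coordinatewise-antipodal robber strategy on the other. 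Your construction is more compact and avoids the hub bookkeeping entirely, at the cost of two ingredients the paper does not need in this form: the observation that a complete strong factor only creates twins and hence can be discarded (true, standard, and consistent with the twin reduction the paper itself uses in Section~4), and the bound $c(C_n^{s'})\le 2$ for $n\ge 2s'+2$. For the latter you do not actually need the ``squeeze'' argument you flag as delicate: by Theorem~\ref{thm:graphpower} and the Corollary following Theorem~\ref{thm:speeds_factors_monotone}, $c(C_n^{s'})=\speed{s'}(C_n)\le c(C_n)=2$ for all $n\ge 4$, so that step is already covered by results in the paper. With that substitution, every step of your argument either mirrors the proof of Lemma~\ref{lem:speed_s_graph_building} or is routine, and the proof goes through.
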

\begin{proof}
    Note that since $t_1$ is finite and all terms of the sequence are positive integers, there can be at most $t_1-1$ indices $i$ for which $t_i > t_{i+1}$.  Let $i_0 = 0$, let $i_1,i_2,\dots,i_\ell$ denote the indices for which $t_{i_j}>t_{i_{j}+1}$, and note that by assumption $t_m = 1$ whenever $m > i_{\ell}$.  For each $k\in \{1,\dots,\ell\}$, let $H_k$ denote the $(t_{i_k}-1)$-fold strong product of $C_{2i_k+2}$.  Note that by Lemma \ref{lem:speed_s_graph_building}, for all $k \in \{1, \dots, \ell\}$ we have $\speeds(H_k) = t_{i_k}$ for $s \le i_{k}$ and $\speeds(H_k) = 1$ for $s > i_{k}$.  Construct a graph $G$ by taking $H_1, H_2, \dots, H_{\ell}$ and adding a new vertex $v$ that is adjacent to vertex $(0, \dots, 0)$ in each of the $H_i$.
    %; call $v$ the \textit{central vertex} of $G$.  
    
    We claim that $\speeds(G)=t_s$ for all positive integers $s$. Fix $s$.  To see that $\speeds(G) \le t_s$, suppose $t_s$ speed-$s$ cops play against a speed-$s$ robber on $G$.  Suppose first that $s \le i_{\ell}$, and fix $k$ such that $i_{k-1} < s \le i_{k}$.  Since $s \le i_{k}$, as noted above we have $\speeds(H_k) = t_{i_k}$.  Moreover, for any $k'< k$ we have $s > i_{k'}$, hence $\speeds(H_{k'}) = 1$; for any $k' > k$ we have $s \le i_{k'}$, so $\speeds(H_{k'}) = t_{i_{k'}} < t_{i_{k}}$. In other words, we have $\speeds(H_k) = t_{i_k} = t_s$ and, for all $k' \not = k$, we have $\speeds(H_{k'}) < t_{i_{k}}$.    
    
    The cops begin the game on any vertices in $H_k$.  They will play according to a winning strategy in the speed-$s$ game on $H_k$; to facilitate this, if the robber ever occupies a vertex outside $H_k$, the cops play as if the robber occupies the vertex $(0,0,0,\dots)$ in $H_k$.  Since the cops use a winning strategy in $H_k$, they either capture the robber or reach a configuration in which some cop $C$ occupies $(0,0,0,\dots,0)$ in $H_k$ and the robber occupies a vertex outside of $H_k$.  On the cops' next turn, $C$ will move to $v$ and remain there for the remainder of the game.  The robber must now occupy a vertex in $H_{k'}$ for some $k'\in\{1,\dots,\ell\}-\{k\}$.  Since $C$ guards the central vertex $v$, the robber will be confined to $H_{k'}$ for the rest of the game.  The remaining $t_{i_k}-1$ cops other than $C$ gradually move to $H_{k'}$.  As noted above, we have $\speeds(H_{k'}) < t_{i_k} = t_s$, hence $\speeds(H_{k'}) \le t_{i_k}-1$, so the cops in $H_{k'}$ can follow a winning strategy for the speed-$s$ game on $H_{k'}$ and thereby capture the robber.

    Now consider the case where $s > i_{\ell}$; we must show that $\speeds(G) = t_s = 1$.  The cop will begin on the central vertex $v$; suppose the robber begins on a vertex in some $H_k$.  The diameter of $H_k$ is $i_{k}+1$, which is at most $i_{\ell}+1$ and hence at most $s$.  On the cop's first turn, she moves to vertex $(0, 0, \dots, 0)$ in $H_k$.  Since the cop blocks the only exit from $H_k$, the robber must remain within $H_k$; since the cop can reach any vertex of $H_k$ on her next move, she can capture the robber.  This completes the proof that $\speeds(G) \le t_s$ for all positive integers $s$.

    Finally, we argue that $\speeds(G) \ge t_s$ for all positive integers $s$.  If $s > i_{\ell}$, then clearly $\speeds(G) \ge 1 = t_s$.  Suppose instead that $s \le i_{\ell}$, and fix $k$ such that $i_{k-1} < s \le i_{k}$.  We claim that $H_k$ is a retract of $G$.  Indeed, let $\psi:V(G)\to V(H_k)$ be the identity map for all $v\in V(H_k)$ and $\psi(u)=(0,0,\dots,0)$ for all vertices $u\in V(G)-V(H_k)$; it is easily seen that $\psi$ is a retraction.  By Theorem \ref{thm:retract}, we now have $\speeds(G)\ge\speeds(H_k)=t_{i_k} = t_s$, as desired.
\end{proof}
\end{section}

\begin{section}{Cartesian products}\label{sec:Cartesian}

We next turn our attention to the speed-$(s,s)$ game played on Cartesian products of graphs.  The usual model of Cops and Robbers has been thoroughly studied on Cartesian products, e.g. by To\v{s}i\'{c} \cite{Tos87}, by Maamoun and Meyniel \cite{MM87}, and by Neufeld and Nowakowski \cite{NN98}:

\begin{theorem}[To\v{s}i\'{c} \cite{Tos87}]\label{thm:regular_general_product}
For graphs $G$ and $H$, we have $c(G \cart H) \le c(G) + c(H)$.
\end{theorem}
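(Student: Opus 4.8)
The plan is to use the classical ``shadow'' strategy: the $c(G)$ cops will be responsible for pinning down the robber's $G$-coordinate, and the $c(H)$ cops for trapping him inside the resulting fiber. Write each vertex of $G\cart H$ as an ordered pair $(g,h)$ and write the robber's position as $r=(r_G,r_H)$. The key structural observation is that a single edge of $G\cart H$ changes exactly one of the two coordinates and by exactly one step, so along any play the sequence of values $r_G$ is itself a legal play of a robber on $G$ (and likewise $r_H$ on $H$), where ``not moving'' in one coordinate corresponds either to a step in the other coordinate or to the robber standing still. I would split the $c(G)+c(H)$ cops into a \emph{$G$-team} of size $c(G)$ and an \emph{$H$-team} of size $c(H)$.

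First I would have the $G$-team play a winning strategy for $c(G)$ cops in ordinary Cops and Robbers on $G$, using the cops' first coordinates as their positions in $G$ and treating $r_G$ as the robber; since $r_G$ is a legal $G$-robber, after finitely many rounds some $G$-team cop $C$ has first coordinate equal to $r_G$ at the end of a cop turn. From then on $C$ alone can \emph{shadow} $r_G$: after each robber move $r_G$ has shifted by at most one step, so on the next cop turn $C$ re-matches it with a single step in the first coordinate, which never disturbs $C$'s second coordinate. The other $c(G)-1$ cops of the $G$-team are now idle. Second, I would bring the $H$-team together with those idle cops into the fiber $\{r_G\}\cart H\cong H$ that currently contains the robber: each such cop keeps its first coordinate equal to $C$'s (re-matching on exactly the cop turns when $r_G$ moves, just as $C$ does), and on the remaining cop turns takes a step of a winning strategy for $c(H)$ cops on $H$ against $r_H$. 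Over the rounds in which $r_G$ is stationary this is a genuine game of Cops and Robbers on $H$ played with at least $c(H)$ cops (with the other rounds acting as harmless time-outs in which $r_H$ does not move), so eventually one of these cops, say $F$, has second coordinate equal to $r_H$; since $F$ also has first coordinate $r_G$, it occupies $r$ and the robber is caught.

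The part I expect to be the main obstacle is confirming that these two phases really compose into a capture, rather than the robber perpetually stalling the fiber pursuit by only ever changing $r_G$. The natural fix is a symmetric dichotomy — if the robber changes $r_G$ on all but finitely many rounds then $r_H$ is eventually constant, and one re-runs the argument with the roles of $G$ and $H$ reversed, now trapping the robber in a single copy of $G$ — but carrying this out requires a careful accounting of how the teams hand off responsibility, of how cops get their ``inactive'' coordinates synchronized with $C$ in the first place, and, crucially, of which cop ends up on $r$ (it must be one that has been maintaining a coordinate-match, not an auxiliary pursuer whose other coordinate is uncontrolled). That bookkeeping, rather than the shadowing itself, is where the real work lies; the projections and the re-matching of shadows are routine.
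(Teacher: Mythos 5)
The paper does not prove this statement; it is quoted from To\v{s}i\'{c} \cite{Tos87}, so there is no internal proof to compare against. Judged on its own, your first phase is fine: $r_G$ is a legal robber trajectory on $G$ (pauses included), so $c(G)$ cops can bring some cop $C$ onto the robber's $G$-coordinate, and $C$ can maintain that match forever by mirroring. The trouble begins exactly where you place it, and the fix you sketch does not close it.

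There are two genuine gaps. First, the hand-off: an $H$-team cop cannot simply ``keep its first coordinate equal to $C$'s,'' because she must first \emph{reach} the fiber $\{r_G\} \cart H$, which is a moving target controlled by the robber. Since $C$ already matches $r_G$, the robber's $G$-moves cost him nothing elsewhere, so he can devote them entirely to evading a single additional pursuer in the $G$-projection; when $c(G)\ge 2$ that pursuer never arrives. Delivering even one more cop onto the moving fiber requires re-running the full $c(G)$-cop shadow capture, and delivering all $c(H)$ of them consumes essentially the entire cop budget --- none of this bookkeeping appears in your write-up. Second, and more fundamentally: even with $c(H)$ cops all matching $r_G$, the robber can stall the fiber game forever by changing $r_G$ on \emph{every} turn, since each such move forces every fiber cop to spend her whole move re-matching the first coordinate, so the $H$-pursuit never advances a single round. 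Your ``symmetric dichotomy'' is not a strategy: the cops must defeat every robber play with a single strategy executed in real time, and the robber need not eventually confine himself to one coordinate --- he can adaptively alternate long blocks of $G$-moves and $H$-moves, resetting whichever projected pursuit is about to succeed. A correct argument must guarantee progress on every round, for instance by simultaneously maintaining a team of $c(H)$ cops matching $r_G$ \emph{and} a team of $c(G)$ cops matching $r_H$, so that every robber move, being a move in only one coordinate, constitutes a full fair round of at least one of the two projected games; arranging this within the budget of $c(G)+c(H)$ cops is where the real work of To\v{s}i\'{c}'s proof lies, and it is absent here.
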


\begin{theorem}[Maamoun and Meyniel \cite{MM87}]\label{thm:regular_product_of_trees}
If $T_1, \dots, T_n$ are trees, then 
$$c(T_1 \cart \dots \cart T_n) = \ceil{\frac{n+1}{2}}.$$
\end{theorem}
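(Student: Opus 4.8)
The plan is to prove the estimate in two halves, the upper bound $c(T_1\cart\cdots\cart T_n)\le\ceil{(n+1)/2}$ and the matching lower bound. Write $N=T_1\cart\cdots\cart T_n$ and record a position in $N$ as an $n$-tuple whose $i$-th entry lies in $V(T_i)$; I use freely that a move in a Cartesian product changes exactly one coordinate, and I assume each $T_i$ has an edge (a single-vertex factor is redundant). For the upper bound, To\v{s}i\'{c}'s Theorem~\ref{thm:regular_general_product} yields only the weak bound $c(N)\le\sum_i c(T_i)=n$, so a coordinated cop strategy is needed. The basic device is a shadowing lemma for a single tree: since trees are cop-win, a cop who spends every move in one fixed coordinate $i$, always stepping in $T_i$ toward the robber's $i$-th coordinate, will after finitely many rounds have her $i$-th coordinate equal to the robber's, and can thereafter hold that equality forever by copying the robber's coordinate-$i$ moves (being free on every turn the robber does not move in coordinate $i$); call this ``gluing coordinate $i$,'' and note that a cop who has glued several coordinates maintains all of them at once, because the robber changes only one coordinate per turn. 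With this in hand, the idea is to field $\ceil{(n+1)/2}=\floor{n/2}+1$ cops who successively glue more and more coordinates: on a turn where the robber moves in coordinate $j$, every cop whose glued set does not contain $j$ advances toward a new glue, and a potential argument (summing the relevant $T_i$-distances over the cops and using finiteness of the trees) should show the robber cannot stall all of them forever, so that some cop eventually has all $n$ coordinates glued and thus sits on the robber.

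The hard part will be making the count exactly $\floor{n/2}+1$ rather than something larger. A lone cop cannot glue two coordinates --- the robber can oscillate in one of them indefinitely --- and, more generally, a subteam is paralyzed whenever the robber can keep bouncing among coordinates that all of its members have glued; so the cops' gluing assignments have to be arranged so that whatever single coordinate the robber moves in, some sufficiently advanced cop is freed to make progress, and at the end the glues must be funneled onto one cop without her ever being asked to correct two coordinates in a single turn. I would set this up by an induction that adds two new trees at the cost of one new cop: the new cop is charged with the two new coordinates, the other $\floor{(n-2)/2}+1$ cops run their inductive strategy on the product of the remaining $n-2$ trees, advanced only on the rounds in which the robber moves in one of those $n-2$ coordinates (so that the subgame only ever sees legal moves, including ``pass--pass'' rounds), and whichever of them wins the subgame also carries the two extra glues and so captures the robber. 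The delicate point --- and the reason the value $\ceil{(n+1)/2}$ appears rather than the weaker $n$ from Theorem~\ref{thm:regular_general_product} --- is that the new cop cannot pin both new coordinates on her own, so her progress must be interleaved with the subteam's work in exactly the right way.

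For the lower bound I would pass to the hypercube. Every tree with an edge retracts onto $K_2$ (properly $2$-colour it; trees are bipartite, and the colouring can be taken to fix a prescribed edge), and a Cartesian product of retractions is again a retraction, so $Q_n$, the $n$-fold Cartesian power of $K_2$, is a retract of $N$; by monotonicity of the cop number under retraction (\cite{BI93}, or Theorem~\ref{thm:retract} with $s=1$) it then suffices to show $c(Q_n)\ge\ceil{(n+1)/2}$. For this I would have the robber, facing $m:=\ceil{(n+1)/2}-1\le n/2$ cops on $Q_n=\{0,1\}^n$, maintain the invariant that immediately after each of his moves every cop is at Hamming distance $\ge 2$ from him --- which keeps him from ever being captured. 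He can establish this at the start since the cops' radius-$1$ balls cover fewer than $2^n$ vertices, and to restore it after the cops move he need only avoid the bit-flips that would bring him within distance $1$ of a cop: a cop at distance $2$ forbids at most two such flips, a cop at distance $1$ forbids exactly one, and farther cops forbid none, so at most $2m\le n$ flips are forbidden. If fewer than $n$ are forbidden he makes an allowed flip; the only way all $n$ can be forbidden is that $m=n/2$ and every cop lies at distance exactly $2$ with the forbidden pairs forming a perfect matching of the coordinates, and then the robber simply stays put --- the cops cannot simultaneously advance and keep every coordinate blocked, so on the next round he again has an allowed move. Hence the robber evades all $m$ cops forever, giving $c(Q_n)\ge m+1=\ceil{(n+1)/2}$ and completing the proof.
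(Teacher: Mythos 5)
This statement is quoted from Maamoun and Meyniel \cite{MM87}; the paper gives no proof of it, so your proposal can only be judged on its own terms. The lower-bound half of your argument is complete and correct: retracting each nontrivial $T_i$ onto $K_2$ spanned by a fixed edge, invoking Proposition~\ref{prop:cartesian_retract} and the Berarducci--Intriglia monotonicity, and then running the evasion argument on $Q_n$ all check out. In particular the counting in the hypercube is sound --- with $m=\ceil{\frac{n+1}{2}}-1\le n/2$ cops, at most $2m\le n$ bit-flips are ever forbidden, a cop at distance $1$ forbids only one flip (so if any cop is at distance $1$ the total is at most $2m-1<n$ and an allowed flip exists), and the only way all $n$ flips are forbidden is that every cop sits at distance exactly $2$ with disjoint forbidden pairs, in which case passing preserves the invariant. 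This is a clean, self-contained proof that $c(Q_n)\ge\ceil{\frac{n+1}{2}}$ and hence of the lower bound for products of nontrivial trees.

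The upper bound, however, has a genuine gap, and it sits exactly where you say the difficulty is. Your shadowing lemma correctly shows that one cop can glue and then hold any single coordinate, and that holding a glued set costs the cop her entire move on any round the robber moves inside that set; as you observe, this is precisely why one cop cannot acquire a second glue against a robber who oscillates in her already-glued coordinate. But the inductive step you propose --- one new cop ``charged with the two new coordinates,'' the subteam winning on the other $n-2$ --- runs into the same obstruction: the subgame winner has glued $n-2$ coordinates and must still acquire two more while holding them, and the new cop, having glued at most one of the two new coordinates, must still acquire the other $n-1$. Neither cop can finish alone, and the ``interleaving in exactly the right way'' that would let them finish jointly is the entire content of the Maamoun--Meyniel upper bound; it is asserted, not supplied. (Note also that the induction already fails as stated at $n=2$, where the ``subteam'' is vacuous and the single new cop is charged with both coordinates; the two-cop win on $T_1\cart T_2$ needs its own argument in which each cop glues one coordinate and then exploits the fact that every robber move frees at least one of them.) A potential function alone will not close this, because the robber can keep any fixed potential constant forever by oscillating; what is needed, and missing, is a precise assignment of target coordinates to cops guaranteeing that on every round some cop makes irreversible progress, together with the endgame that funnels all $n$ glues onto one cop. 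Until that is written down, only the inequality $c(T_1\cart\dots\cart T_n)\ge\ceil{\frac{n+1}{2}}$ has actually been proved.
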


\begin{theorem}[Neufeld and Nowakowski \cite{NN98}]\label{thm:regular_product_of_cycles}
Let $G = H_1 \cart \dots \cart H_n$, where each $H_i$ is a cycle of length at least 4.  Then $c(G) = n+1$.
\end{theorem}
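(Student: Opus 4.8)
The plan is to prove the two inequalities $\cnum(G)\ge n+1$ and $\cnum(G)\le n+1$ separately, where $G=C_{m_1}\cart\cdots\cart C_{m_n}$; throughout I identify $V(G)$ with $\mathbb{Z}_{m_1}\times\cdots\times\mathbb{Z}_{m_n}$, so that two vertices are adjacent exactly when they agree in all but one coordinate $i$ and differ by exactly $1$ (mod $m_i$) there. I write $e_i$ for the $i$th coordinate step. The lower bound will be a short ``local'' robber strategy, and the upper bound an induction on $n$ resting on a lemma that sharpens To\v{s}i\'{c}'s bound (Theorem~\ref{thm:regular_general_product}) by one cop when the second factor is a cycle.

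For the lower bound I would have the robber maintain the invariant that, after each of his moves, every cop is at distance at least $2$ from him; this clearly forbids capture, since each cop then needs two turns to reach him. Maintaining the invariant is a counting argument. On his turn the robber has exactly $2n+1$ candidate destinations, namely $N[r]=\{r\}\cup\{r\pm e_i\}$, and a destination $v$ is ``blocked'' by a cop $x$ precisely when $v\in N[x]$. Because $m_i\ge 4$ for all $i$, a case check on the position of $x$ relative to $r$ gives $\size{N[r]\cap N[x]}\le 2$ whenever $\dist(r,x)\ge 1$: if $x\in N[r]$ the only common closed neighbors are $r$ and $x$; if $\dist(r,x)=2$ the coordinate structure forces at most two common neighbors (here $m_i\ge 4$ is used, e.g.\ to keep $r+e_i$ and $r-e_i$ nonadjacent, which fails for $m_i=3$); and if $\dist(r,x)\ge 3$ there are none. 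So $n$ cops block at most $2n$ of the $2n+1$ options, and a safe move always exists; a similar count, using $\size{V(G)}=\prod m_i\ge 4^n> n(2n+1)$, lets the robber pick a legal starting vertex at distance $\ge 2$ from all cops. Hence $n$ cops never win, i.e.\ $\cnum(G)\ge n+1$. The only delicate point is the $\size{N[r]\cap N[x]}\le 2$ casework when some $m_i=4$, but it is routine.

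For the upper bound I would induct on $n$, the base case being $\cnum(C_{m_1})=2$. For the step, fix $n\ge 2$, set $H=C_{m_1}\cart\cdots\cart C_{m_{n-1}}$, and assume inductively a winning strategy for $n$ cops on $H$; I claim $n+1$ cops capture the robber on $H\cart C_{m_n}$ (this beats To\v{s}i\'{c}'s $\cnum(H\cart C_{m_n})\le \cnum(H)+\cnum(C_{m_n})=n+2$ by one). The idea: first run the $n$-cop $H$-strategy against the robber's $H$-projection; since it wins the pure game on $H$, eventually some cop $X$ has $X_H=r_H$ --- either the strategy ``captures'' the projected robber, or the projected robber steps onto a cop's $H$-coordinate. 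From then on $X$ maintains $X_H=r_H$, spending a move only when the robber moves within $H$. The $n-1$ cops thereby freed, together with the spare cop, again total $n$; rerun the $H$-strategy on the $H$-projection with them to obtain a second cop $Y$ with $Y_H=r_H$. Now $X$ and $Y$ both shadow the robber's $H$-coordinate forever, so in the $C_{m_n}$-coordinate they behave like two cops pursuing the robber on the cycle $C_{m_n}$, and each is a genuine threat there: $X$ sitting at $C_{m_n}$-coordinate $v$ with $X_H=r_H$ captures the robber the instant his $C_{m_n}$-coordinate reaches $v$. Running the standard two-cop cycle pursuit in this coordinate then forces the robber's $C_{m_n}$-coordinate onto $X$ or $Y$, which is capture.

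The main obstacle is that last phase: whenever the robber moves inside $H$, both $X$ and $Y$ must answer in the $H$-coordinate, so neither advances the cycle pursuit, and a robber who moves forever within $H$ stalls it. Resolving this is the heart of the proof. One must make the cops adaptive, so that while the robber refuses to change his $C_{m_n}$-coordinate the still-free cops converge toward his $C_{m_n}$-fiber (a copy of $H$, on which $n$ cops win by induction), and exhibit a potential function that strictly decreases whether the robber moves within $H$ (progress toward capture in the current fiber) or within $C_{m_n}$ (progress in the two-cop cycle pursuit, and a bounded one-time shift of the fiber). Reconciling these two modes of progress --- and the fact that moving in $C_{m_n}$ relocates the target fiber --- is exactly the technical core of the Cartesian-product cop-number arguments of To\v{s}i\'{c} \cite{Tos87} and Neufeld and Nowakowski \cite{NN98}, and I would model the bookkeeping on theirs; a cleaner-to-state but more elaborate alternative is to strengthen the inductive hypothesis to assert that $n+1$ cops can finish from any position in which one cop already shadows $n-1$ of the coordinates.
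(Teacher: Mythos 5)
First, a point of orientation: the paper does not prove this statement at all --- Theorem \ref{thm:regular_product_of_cycles} is imported verbatim from Neufeld and Nowakowski \cite{NN98} as a known result, so there is no in-paper argument to compare yours against; your proposal has to stand on its own. Your lower bound does stand on its own: the counting argument is correct. The key inequality $\size{N[r]\cap N[x]}\le 2$ for $\dist(r,x)\in\{1,2\}$ checks out in all cases (including $x=r+2e_i$ with $m_i=4$, where the two common neighbours are $r+e_i$ and $r-e_i$), the triangle inequality kills the case $\dist(r,x)\ge 3$, and $4^n>n(2n+1)$ handles the initial placement. So $n$ cops block at most $2n$ of the robber's $2n+1$ options and he survives forever; $\cnum(G)\ge n+1$ is fully proved.

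The upper bound, however, has a genuine gap, and it is exactly the one you flag yourself. After you have two cops $X$ and $Y$ shadowing the robber's $H$-projection, a robber who moves within $H$ on every turn forces both $X$ and $Y$ to spend their moves in the $H$-coordinate, so the two-cop pursuit on $C_{m_n}$ never advances and no potential function of the kind you gesture at is exhibited. Saying that resolving this "is the heart of the proof" and that you "would model the bookkeeping on" \cite{Tos87} and \cite{NN98} is an accurate diagnosis, but it is a deferral, not an argument: the entire content of the inequality $\cnum(H\cart C_{m_n})\le \cnum(H)+1$ (as opposed to To\v{s}i\'{c}'s $\cnum(H)+2$) lives in precisely that reconciliation of the two modes of progress. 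Your suggested fix --- strengthening the induction to say that $n+1$ cops can win from any position in which one cop already shadows $n-1$ coordinates --- is a reasonable direction, but as written it is an unproved assertion, and it is not obviously easier than the original claim. As it stands the proposal establishes $\cnum(G)\ge n+1$ cleanly and only sketches $\cnum(G)\le n+1$; to complete it you would need to actually construct the adaptive strategy and its decreasing potential, or else simply cite the upper bound from \cite{NN98} as this paper does.
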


We aim to generalize these results to the speed-$(s,s)$ game.  We begin with Theorem \ref{thm:regular_general_product}.  One might hope that $\speeds(G \cart H) \le \speeds(G) + \speeds(H)$, but this is not the case; in fact, $\speeds(G \cart H)$ cannot be bounded above by any function of $\speeds(G)$ and $\speeds(H)$, as we next show.

Our construction makes use of finite projective planes.  Recall that a \textit{finite projective plane of order $q$} is a collection of \textit{points} and \textit{lines} such that:
\begin{itemize}
\item [\textbf{(1)}] each line is a set consisting of $q+1$ points;
\item [\textbf{(2)}] each point is contained in exactly $q+1$ lines;
\item [\textbf{(3)}] any two distinct lines contain exactly one common point; 
\item [\textbf{(4)}] any two distinct points belong to exactly one common line; and
\item [\textbf{(5)}] there exist four points, of which no line contains more than two.
\end{itemize}
It is well-known that finite projective planes exist for any order $q$ that is a power of a prime.

Given a finite projective plane, define the \textit{incidence graph} of the projective plane to be the graph consisting of one vertex for each point, one vertex for each line, and an edge joining a point $p$ with a line $\ell$ if and only if $p \in \ell$.

\begin{theorem}\label{thm:projective_plane_product}
Fix $q$, suppose that a finite projective plane $\mathcal{P}$ of order $q$ exists, and let $P$ be the incidence graph of $\mathcal{P}$.  Then $\speed{2}(P) = 2$, but $\speed{2}(P \cart P) \ge q+1$.
\end{theorem}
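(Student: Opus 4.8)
The first claim, that $\speed{2}(P) = 2$, follows from general results already in hand: $P$ is bipartite with diameter $3$ (any point lies on $q+1$ lines, any two points share a line, so point-to-point distance is $2$ and line-to-point distance is at most $3$), so $\rad(P) = 3$ and hence $\speed{2}(P) \ge 1$; one checks $\speed{2}(P) > 1$ because a single speed-$2$ cop on $P$ amounts to a single cop on $P^2$, and $P^2$ contains an induced cycle on which the robber survives (alternatively, $P$ retracts onto $C_6$-type structure, or one argues directly that the girth-$6$ incidence graph with its second power still admits evasion by one cop). The real content, and where I would spend essentially all the effort, is the lower bound $\speed{2}(P \cart P) \ge q+1$.

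The plan is to exhibit a robber strategy against $q$ speed-$2$ cops on $P \cart P$, viewed via Theorem~\ref{thm:graphpower} as the ordinary game on $(P \cart P)^2$. A vertex of $P \cart P$ is a pair $(a,b)$ with $a, b \in V(P)$, and moving at speed $2$ means changing $(a,b)$ to $(a',b')$ with $\dist_P(a,a') + \dist_P(b,b') \le 2$. The robber will keep both coordinates on the "point" side of the bipartition of $P$ (call these point-vertices), moving coordinatewise along the point-graph whose adjacency is "$\dist_P = 2$", i.e. "share a line." The key combinatorial fact I would extract from the projective plane axioms is a kind of high girth / low covering property: given a point $p$ and any set $S$ of at most $q$ points, the robber can find a point $p'$ with $p' \ne p$ that shares a line with $p$ but avoids all the constraints the cops impose — essentially because through $p$ there are $q+1$ lines, each carrying $q$ points other than $p$, giving $q(q+1)$ candidate moves, far more than $q$ cops can threaten in two steps. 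The heart of the argument is a potential/invariant: the robber maintains that no cop is "close" to him in $(P\cart P)^2$, where closeness in each coordinate means within distance $2$ in $P$ (so within one step in the point-graph). Since a speed-$2$ cop changes the sum of its two coordinate-distances by at most $2$, on each cop move at most a bounded number of cops can newly become threats, and each threatening cop in coordinate $i$ forbids the robber only $O(q)$ of his $\Theta(q^2)$ possible next positions; a counting argument then shows a safe robber move always exists.

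More concretely, the steps I would carry out are: (i) record the metric structure of $P$ — point-to-point distance $2$, and the "collinearity" graph $\Gamma$ on point-vertices (adjacent iff collinear) is strongly regular; (ii) observe that a speed-$2$ move of a cop in $P \cart P$, restricted to its effect on whether each coordinate is $\Gamma$-close to the robber's corresponding coordinate, can be tracked; (iii) set up the invariant "for every cop $C$ with coordinates $(c_1, c_2)$, either $c_1$ is not $\Gamma$-adjacent-or-equal to the robber's first coordinate, or $c_2$ is not $\Gamma$-adjacent-or-equal to the robber's second coordinate" — i.e. no cop is poised to capture; (iv) show this invariant is established initially by placing the robber generically (using axiom (5) to guarantee enough points in general position); and (v) show the invariant is maintained: after the cops move, count the robber's available moves in $\Gamma \cart \Gamma$ restricted to point-vertices, namely roughly $(q(q+1))^2$ of them up to staying put, and show the $q$ cops, each able to "cover" at most a linear-in-$q$ number of these in one coordinate, cannot cover them all — here the projective-plane axioms (3) and (4), which limit how many collinear points two given points can simultaneously see, are exactly what keeps each cop's reach small.

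The main obstacle will be step (v): pinning down precisely how many robber moves a single cop can forbid after one cop step, and verifying that $q$ cops together leave at least one safe move. This requires a careful definition of "forbidden" that accounts for the two-coordinate interaction — a cop forbids a robber position only if it threatens in \emph{both} coordinates simultaneously — together with the fact that the set of point-vertices collinear with a fixed point and lying on a fixed line through it is tiny (size at most $q$, from axiom (1)), while two distinct points determine a unique line (axiom (4)). Getting the bookkeeping right so that the count genuinely beats $q$, rather than merely $q - O(1)$, is the delicate part, and may force the robber to also use "stay put in one coordinate" moves to enlarge his effective move set; I would structure the final counting lemma to make that slack explicit.
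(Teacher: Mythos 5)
Your plan for the main lower bound has a structural flaw that I think is fatal as written: the speed budget in $P \cart P$ is \emph{shared} between the two coordinates. A speed-$2$ move from $(a,b)$ to $(a',b')$ requires $\dist_P(a,a') + \dist_P(b,b') \le 2$, so the robber cannot move both coordinates from a point to a collinear point in one turn (that would cost $2+2=4$ steps). Consequently the move set you count in step (v) --- roughly $(q(q+1))^2$ positions of $\Gamma \cart \Gamma$ restricted to point-vertices --- is simply not reachable; from a point-point position the robber can only stay put, move one coordinate to a collinear point while freezing the other, or take one step in each coordinate (landing on line-vertices). This collapses your counting argument before the delicate bookkeeping you flag in the last paragraph even begins. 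The paper's proof avoids this by having the robber take exactly one step in \emph{each} coordinate every turn (so his coordinates alternate sides of the bipartition), and rests on a clean lemma: if the robber occupies $v$ in $P$ and $q$ cops occupy $w_1,\dots,w_q \ne v$, then some neighbor $u$ of $v$ satisfies $\dist_P(u,w_i)\ge 2$ for all $i$, because $v$ has $q+1$ neighbors and each cop can be within distance $1$ of at most one of them (by axioms (3) and (4)). Applying this in each coordinate and using additivity of distance in the Cartesian product, the robber's new position is at distance at least $1+2=3$ from every cop, which is exactly the safety threshold against speed-$2$ cops. Your invariant (iii) is the right target, but the maintenance step needs this one-step-per-coordinate mechanism, not a collinearity-graph product.

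Separately, your treatment of $\speed{2}(P)=2$ omits the upper bound entirely ("follows from general results already in hand" is not an argument): the paper places one cop on a point-vertex and one on a line-vertex, so that any two points share a line and any two lines share a point forces every vertex to be within distance $2$ of one of the cops, giving capture on the first turn. The lower bound $\speed{2}(P)>1$ is as you suspect, though the paper argues it directly via $N_2[w]\ne N_2[x]$ for distinct $w,x$ rather than via an induced cycle in $P^2$.
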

\begin{proof}
It is easily seen that $\speed{2}(P) = 2$.  Since every two lines contain a common point, the distance between any two vertices of $P$ corresponding to lines of $\mathcal{P}$ is at most 2.  Likewise, since any two points lie in a common line, the distance between any two vertices corresponding to points of $\mathcal{P}$ is at most 2.  Hence, if one cop starts on a vertex corresponding to a point and the other starts the game on a vertex corresponding to a line, then they can capture the robber on their first turn, no matter where the robber begins.  On the other hand, the robber can easily evade a single cop on $P$.  Suppose without loss of generality that the cop begins on a vertex $v$ corresponding to a line in $\mathcal{P}$; the robber begins on any vertex corresponding to a point in $\mathcal{P}$ that is not adjacent to $v$.  Such a vertex must be at distance at least 3 from $v$, so the cop cannot capture the robber on her first turn.  Note that for any two distinct vertices $w$ and $x$ in $P$, we have $N_2[w] \not = N_2[x]$; consequently, there must be a vertex within distance 2 of the robber that is not within distance 2 of the cop.  The robber moves to such a vertex; the cop cannot reach the robber on her next turn.  The robber may repeat this strategy indefinitely, so he cannot be captured by a single cop.

Now consider the speed-$(2,2)$ game played on $P \cart P$.  To show that $\speed{2}(P \cart P) \ge q+1$, it suffices to show that a robber can perpetually evade $q$ cops.  Before presenting the robber's strategy on $P\cart P$, we establish a useful property of $P$.  

Suppose that the robber occupies a vertex $v$ of $P$ and that $q$ cops occupy vertices $w_1, \dots, w_q$, all distinct from $v$ (though not necessarily distinct from each other).  We claim that there is some neighbor $u$ of $v$ such that $\textrm{dist}(u,w_i) \ge 2$ for all $i \in \{1, \dots, q\}$.  By symmetry, we may suppose that $v$ represents a point $p$ in $\mathcal{P}$.  Every neighbor of $v$ represents a line containing $p$; there are $q+1$ of these.  Of these, each cop occupying a line can occupy at most one neighbor of $v$ (and cannot be adjacent to any others).  Each cop occupying a point cannot occupy any neighbors of $v$ and can be adjacent to only one, since any two distinct points in $\mathcal{P}$ lie on exactly one common line.  Hence, of the $q+1$ neighbors of $v$, at most $q$ can be within distance 1 of a cop, so at least one neighbor of $v$ lies at distance 2 or greater from every cop, as claimed.

Now return to the speed-$(2,2)$ game played on $P \cart P$; we explain how the robber can evade $q$ cops.  We claim that on each of the robber's turns, provided that he has not already been captured, he can move to a vertex that is distance at least 3 from every cop.  Thereby, the robber can always ensure that he cannot be captured on the ensuing cop turn, so he can evade the cops indefinitely.  (Note that this also ensures that the robber can choose an initial position that will prevent capture on the first cop turn: he simply imagines that he already occupies a vertex that doesn't contain a cop, and then ``moves'' to a vertex that lies at distance 3 or greater from every cop.)

Suppose that it is the robber's turn; denote the robber's position by $(u,v)$, and denote the cops' positions by $(w_1,x_1), \dots, (w_q,x_q)$.  By the argument above, $u$ has a neighbor $u'$ in $G$ such that $\textrm{dist}_P(u,w_i) \ge 2$ for all $i$ such that $w_i \not = u$; likewise, $v$ has a neighbor $v'$ in $P$ such that $\textrm{dist}_P(v,x_i) \ge 2$ for all $i$ such that $x_i \not = v$.  The robber now moves to vertex $(u',v')$, which is clearly at distance 2 from $(u,v)$.  We claim that $(u',v')$ is at distance 3 or greater from $(w_i,x_i)$ for all $i \in \{1, \dots, q\}$.  Note that $\textrm{dist}_{P \cart P}((u',v'),(w_i,x_i)) = \textrm{dist}_P(u',w_i) + \textrm{dist}_P(v',x_i)$.  Since $u' \in N(u)$, if $w_i = u$ we have $\textrm{dist}_P(u',w_i) = 1$; otherwise, by choice of $u'$, we have $\textrm{dist}_P(u',w_i) \ge 2$.  Likewise, if $x_i = v$ we have $\textrm{dist}_P(v',x_i) = 1$, and otherwise $\textrm{dist}_P(v',x_i) \ge 2$.  Since the robber had not already been captured we cannot have both $u=w_i$ and $v=x_i$ for the same value of $i$, hence $$\textrm{dist}_{P \cart P}((u',v'),(w_i,x_i)) = \textrm{dist}_P(u',w_i) + \textrm{dist}_P(v',x_i) \ge 1+2 = 3.$$
Thus the robber has moved to a vertex at distance at least 3 from every cop, which completes the proof.
\end{proof}

Since there exist projective planes of arbitrarily large order, Theorem~\ref{thm:projective_plane_product} shows that $\speed{2}(G \cart H)$ can be arbitrarily large even when $\speed{2}(G)+\speed{2}(H) = 4$.  We remark that the strategy employed by the robber in the proof Theorem \ref{thm:projective_plane_product} is, in some sense, to play two speed-1 games simultaneously, one on each factor of $P \cart P$.  This strategy works well because even though $\speed{2}(P) = 2$, it was shown in \cite{Pra10} that $c(P) = q+1$.  It would be interesting to determine whether one can, in general, bound $\speed{s}(G \cart H)$ above by some function of $\speed{t}(G)$ and $\speed{s-t}(G)$ for arbitrary $t \in \{1, \dots, s-1\}$.\\

 We next consider the speed-$(s,s)$ game played on Cartesian products of trees.  We begin with some terminology.  Suppose $G = G_1 \cart \dots \cart G_d$ for some graphs $G_1, \dots, G_d$.  As is standard, we view the vertex set of $G$ as the set $\{(x_1, \dots, x_d) \, : \, x_i \in V(G_i) \text{ for all } i\}$; for a vertex $v = (x_1, \dots, x_d)$ we call $x_i$ the \textit{$i$th coordinate} of $v$.  For any $k \in \{1, \dots, d\}$, the \textit{projection} of $G$ onto $G_k$ is the retraction $\phi$ defined by $\phi((x_1, \dots, x_d)) = x_k$; we sometimes refer to the image of this map as \textit{dimension $k$} or \textit{the $k$th dimension} of $G$.  For two vertices $x=(x_1,\dots,x_d)$ and $y=(y_1,\dots,y_d)$ in $G$ and a given $k\in\{1,\dots,d\}$, we define the \textit{distance in dimension $k$} from $x$ to $y$ as $\dist_{G_k}(x_k, y_k)$.  Likewise, when we refer to the distance between a cop and the robber in a given dimension $k$, we mean the distance in dimension $k$ from the vertex that the cop occupies to the vertex that the robber occupies.  When a player occupies vertex $(x_1, \dots, x_d)$, we refer to $x_k$ as their \textit{shadow} in dimension $k$. We say that a cop has \textit{captured the robber's shadow} when the cop and robber have the same shadow in dimension $k$.

The following simple observation, in conjunction with Theorem \ref{thm:retract}, will be very helpful in establishing lower bounds on $\speeds$.

\begin{proposition}\label{prop:cartesian_retract}
If $G'$ is a retract of $G$ and $H'$ is a retract of $H$, then $G'\cart H'$ is a retract of $G\cart H$.
\end{proposition}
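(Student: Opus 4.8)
The plan is to take the ``product'' of the two given retractions and verify the definition of retraction directly. Let $\phi : V(G) \to V(G')$ be a retraction from $G$ to $G'$ and let $\psi : V(H) \to V(H')$ be a retraction from $H$ to $H'$, and define $\theta : V(G \cart H) \to V(G' \cart H')$ by $\theta(u,v) = (\phi(u),\psi(v))$. I would then show that $\theta$ is a retraction from $G \cart H$ to $G' \cart H'$.

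First I would observe that $G' \cart H'$ really is a subgraph of $G \cart H$: its vertex set $V(G') \times V(H')$ is contained in $V(G) \times V(H)$, and every edge of $G' \cart H'$ comes either from an edge of $G'$ (with the $H'$-coordinate fixed) or from an edge of $H'$ (with the $G'$-coordinate fixed), each of which remains an edge of $G \cart H$ since $G'$ is a subgraph of $G$ and $H'$ is a subgraph of $H$. Next, $\theta$ fixes $V(G' \cart H')$ pointwise: if $u \in V(G')$ and $v \in V(H')$, then $\phi(u) = u$ and $\psi(v) = v$, so $\theta(u,v) = (u,v)$.

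It remains to check that $\theta$ is a homomorphism. Given an edge $(u_1,v_1)(u_2,v_2)$ of $G \cart H$, the definition of the Cartesian product yields two symmetric cases: either $u_1 = u_2$ and $v_1 v_2 \in E(H)$, or $v_1 = v_2$ and $u_1 u_2 \in E(G)$. In the first case $\phi(u_1) = \phi(u_2)$, while $\psi(v_1)\psi(v_2) \in E(H')$ because $\psi$ is a homomorphism; hence $(\phi(u_1),\psi(v_1))(\phi(u_2),\psi(v_2))$ is an edge of $G' \cart H'$. The second case is identical with the roles of the two coordinates swapped. This shows $\theta$ is a homomorphism, hence a retraction.

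I do not anticipate any real obstacle here; the argument is essentially pure bookkeeping. The only point that warrants a moment's attention is the reflexive-graph convention used throughout the paper: when $\phi$ (or $\psi$) happens to identify the two endpoints of an edge, the pair $\phi(u)\phi(u)$ is a loop and therefore still counts as an edge, so both the homomorphism condition and the membership $(\phi(u_1),\psi(v_1))(\phi(u_2),\psi(v_2)) \in E(G' \cart H')$ go through without a special case. This is exactly the setting of Definition~\ref{def:retraction}, so no difficulty arises.
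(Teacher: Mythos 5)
Your proof is correct and takes essentially the same approach as the paper: define the map coordinatewise from the two given retractions, note it fixes $V(G'\cart H')$ pointwise, and verify the homomorphism property by the same two-case analysis on Cartesian-product edges. The extra remarks about $G'\cart H'$ being a subgraph and about the reflexive convention are harmless additions that do not change the argument.
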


\begin{proof}
Let $\phi_1$ be a retraction from $G$ to $G'$ and let $\phi_2$ be a retraction from $H$ to $H'$.  We will show that the map $\phi:V(G\cart H)\to V(G'\cart H')$ given by $\phi(u,v)=\left(\phi_1(u),\phi_2(v)\right)$ is a retraction.

Since $\phi_1$ and $\phi_2$ are retractions, for any $(x,y)\in V(G'\cart H')$ we have $\phi(x,y)=\left(\phi_1(x),\phi_2(y)\right)=(x,y)$.

It remains to show that $\phi$ is a homomorphism.  Let $(u_1,u_2)$ and $(v_1,v_2)$ be two adjacent vertices in $G\cart H$; we must show that $\phi((u_1,u_2))\phi((v_1,v_2)) \in E(G' \cart H')$.  By definition of the Cartesian product, either $u_1=v_1$ and $u_2v_2\in E(H)$ or $u_2=v_2$ and $u_1v_1\in E(G)$; by symmetry we may assume the former.  Now
\[\phi((u_1,u_2))\phi((v_1,v_2)) = \left(\phi_1(u_1),\phi_2(u_2)\right)\left(\phi_1(v_1),\phi_2(v_2)\right).\]
By assumption $u_1 = v_1$ hence $\phi_1(u_1) = \phi_1(v_1)$, and because $\phi_2$ is a homomorphism we have $\phi_2(u_2)\phi_2(v_2)\in E(H')$, so it follows that $\phi((u_1,u_2))\phi((v_1,v_2))\in E(G'\cart H')$.
%By definition of $\phi$, we have $\phi\left((u_1,u_2)\right)=\left(\phi_1(u_1),\phi_2(u_2)\right)$ and $\phi\left((v_1,v_2)\right)=\left(\phi_1(v_1),\phi_2(v_2)\right)$.  Since $\phi_1$ and $\pd since $H'$ \textcolor{blue}{(Do I mean $G'$ here, not $H'$?)} is assumed to be reflexive, we have $\phi_1(u_1)=\phi_1(v_1)$ and $\phi_2(u_2)\phi_2(v_2)\in E(H')$, so $\phi\left((u_1,u_2)\right)\phi\left((v_1,v_2)\right)\in E(G'\cart H')$.  

It follows that $\phi$ is a retraction from $G \cart H$ to $G' \cart H'$, as claimed.
\end{proof}

%We begin by considering the speed-$s$ game on products of trees.  
We can obtain an easy upper bound on $\speed{s}$ as a corollary of Theorem~\ref{thm:speed_s_active} and a result of Neufeld and Nowakowski (\cite{NN98}, Theorem 2.5):
\begin{corollary}\label{cor:cartesian_product_trees_upper}
Fix $d \ge 3$ and let $T_1, \dots, T_d$ be trees.  For $s \ge 2$, we have
$$\speed{s}(T_1 \cart \dots \cart T_d) \le \ceil{\frac{d}{2}}.$$
\end{corollary}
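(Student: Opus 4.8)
The plan is to combine Theorem~\ref{thm:speed_s_active} with the cited bound of Neufeld and Nowakowski. Theorem~\ref{thm:speed_s_active} tells us that for any graph $G$ and any $s \ge 2$, $\speed{s}(G) \le c^-(G) \le c'(G)$, where $c'$ is the \emph{active} cop number. So it suffices to show that $c'(T_1 \cart \dots \cart T_d) \le \ceil{d/2}$. The result of Neufeld and Nowakowski (\cite{NN98}, Theorem 2.5) should be precisely a statement about the active cop number of Cartesian products of trees (or at least something that implies this bound); the natural reading is that it gives $c'(T_1 \cart \dots \cart T_d) \le \ceil{(d+1)/2}$ or perhaps exactly $\ceil{d/2}$ for $d \ge 3$. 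I would first quote that theorem verbatim to pin down the exact constant it provides.

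First I would state: by Theorem~\ref{thm:speed_s_active}, $\speed{s}(T_1 \cart \dots \cart T_d) \le c'(T_1 \cart \dots \cart T_d)$ for all $s \ge 2$. Then I would invoke the Neufeld--Nowakowski result to conclude $c'(T_1 \cart \dots \cart T_d) \le \ceil{d/2}$ (assuming that is the exact form of their bound for $d \ge 3$), and chain the two inequalities. If instead their theorem only gives $\ceil{(d+1)/2}$ cops for the active game, then a little more care is needed: one would have to check whether the extra cop can be shed, perhaps by noting that $\ceil{(d+1)/2} = \ceil{d/2}$ when $d$ is odd and only differs by one when $d$ is even, and then supplying a separate argument for even $d$. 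But given that the corollary is stated as an easy consequence, I expect the cited theorem to give exactly what is needed, namely the $\ceil{d/2}$ bound for active cops on products of at least three trees.

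The main (and essentially only) obstacle is bookkeeping: making sure the inequality from \cite{NN98} is quoted with the correct hypotheses ($d \ge 3$ is exactly the regime where $\ceil{d/2}$ rather than something larger is achievable, mirroring the $d \ge 3$ hypothesis here) and with the correct ceiling. There is no real mathematical content to generate — the proof is a two-line deduction: $\speed{s}(G) \le c^-(G) \le c'(G) \le \ceil{d/2}$, where the first two inequalities are Theorem~\ref{thm:speed_s_active} and the last is the Neufeld--Nowakowski bound. I would write it as exactly that chain, with a sentence naming each justification, and nothing more.
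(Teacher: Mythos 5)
Your proposal matches the paper's proof exactly: the paper also chains $\speed{s}(G) \le c'(G)$ from Theorem~\ref{thm:speed_s_active} with the bound $c'(G) \le \ceil{d/2}$ from Theorem 2.5 of \cite{NN98}. The cited Neufeld--Nowakowski result does supply the $\ceil{d/2}$ bound for $d \ge 3$ directly, so the extra bookkeeping you anticipated is not needed.
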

\begin{proof}
Let $G = T_1 \cart \dots \cart T_d$.  By Theorem~\ref{thm:speed_s_active} we have $\speed{s}(G) \le c'(G)$; by Theorem 2.5 in \cite{NN98}, we have $c'(G) \le \ceil{\frac{d}{2}}$.
\end{proof}
Note that Corollary \ref{cor:cartesian_product_trees_upper} requires $d \ge 3$; we handle the cases $d=1$ and $d=2$ later, in Theorem \ref{thm:speed_s_large_grids}. 

Using arguments inspired by those in \cite{NN98}, we can obtain more refined upper bounds on $\speeds$.  The following (fairly technical) lemma will be helpful in establishing these bounds.  The proof was inspired by viewing $G$ and $H$ each as a Cartesian product of paths.  It may behoove the reader to consider this case when trying to digest the entire proof.  However, we state and prove the lemma in full generality, with as few restrictions on $G$ and $H$ as possible.

\begin{lemma}\label{lem:active_and_change}
Fix $s \ge 2$, let $H$ be a graph, and suppose that $k$ cops can capture a robber in the speed-$(s,s)$ game on $H$ under the additional restriction that on each turn, the robber must move to a vertex whose distance from his current position is either $s-1$ or $s$.  Then for any graph $G$,
$$\speed{s}(G \cart H) \le c^-(G) + k.$$
\end{lemma}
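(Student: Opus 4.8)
The plan is to split the cops into two teams: team $A$, consisting of $c^-(G)$ cops, who will take charge of the $G$-coordinate, and team $B$, consisting of $k$ cops, who will take charge of the $H$-coordinate. Team $A$ proceeds exactly as in the proofs of Theorems~\ref{thm:speed_s_active} and~\ref{thm:speeds_factors_monotone}: they imagine a semi-active game on $G$ in which the robber's shadow in the $G$-coordinate plays the role of the robber. On each real turn they decompose the motion of this shadow into single steps (or, if the shadow does not move, into a step to a neighbour and back, which is legal since $s \ge 2$) and reply to each step according to a fixed winning strategy for the semi-active game on $G$; because $s \ge 2$, every cop in team $A$ can carry out the resulting short sequence of moves within one real turn. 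After finitely many turns some cop $C \in A$ occupies the robber's shadow in the $G$-coordinate, and from then on $C$ mirrors that shadow, so that at the end of every cop turn $C$ agrees with the robber in the $G$-coordinate. On each such turn $C$ spends only as many steps as she needs to restore this agreement (at most the robber's $G$-displacement on the previous turn) and devotes her remaining steps to moving toward the robber in the $H$-coordinate.

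Simultaneously, team $B$ imagines a speed-$(s,s)$ game on $H$ in which the robber's shadow in the $H$-coordinate plays the role of the robber, and uses the hypothesised winning strategy for the restricted version of that game; team $B$'s cops also keep their $G$-coordinate equal to $C$'s, so that a capture of the $H$-coordinate shadow by a team-$B$ cop is in fact a capture of the robber in $G \cart H$. For this to be legitimate we need to know that, once $C$ has caught the $G$-coordinate shadow, every subsequent robber move that does not result in capture carries the $H$-coordinate shadow a distance exactly $s-1$ or $s$, which is precisely the restriction under which team $B$'s strategy succeeds. The engine is the following monovariant: write $a$ and $b$ for the robber's displacements in the $G$- and $H$-coordinates on a given turn, so $a + b \le s$; after $C$ uses $a$ of her steps to restore the $G$-agreement she has $s - a \ge b$ steps left, enough to cancel the robber's $H$-displacement with $s-a-b$ steps to spare, so the distance in the $H$-coordinate between $C$ and the robber never increases and strictly decreases whenever $a + b < s$. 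As that distance is a nonnegative integer, after finitely many turns the robber must move a total of exactly $s$ steps on every turn; a short further argument (the delicate point, discussed below) then forces $a \le 1$, whence $b \in \{s-1,s\}$, so team $B$'s strategy applies verbatim and captures the $H$-coordinate shadow while $C$ holds the $G$-coordinate, winning the game.

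The main obstacle is precisely that bracketed further argument: ruling out, or otherwise neutralising, turns on which the robber spends two or more of his $s$ steps moving within $G$. On such a turn $C$'s monovariant fails to decrease and team $B$'s strategy does not directly fire, so one must show that this behaviour cannot persist indefinitely — for instance by producing a second monovariant that decreases exactly on these turns, exploiting the finiteness of $G$ together with the fact that team $B$'s cops then have spare steps available to close in on the $H$-coordinate shadow. A secondary nuisance, already present in the proofs of Theorems~\ref{thm:speed_s_active} and~\ref{thm:speeds_factors_monotone}, is the bookkeeping needed when a single real turn is simulated by several turns of each of the two imagined games at once: one must verify throughout that no cop ever requires more than $s$ steps in a single real turn, and that the two simulations can be advanced consistently. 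Keeping the case $G, H$ products of paths in mind is a useful sanity check while carrying out this bookkeeping.
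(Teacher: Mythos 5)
Your two-team decomposition and your use of the restricted game on $H$ to absorb the rounds where the robber moves $s-1$ or $s$ steps in the $H$-coordinate are both in the spirit of the paper's proof, but the gap you flag is genuine, and your proposed repair (``a short further argument then forces $a\le 1$'') is not available. Nothing forces the robber to keep his net $G$-displacement at most $1$: on every turn he can move a net distance of $2$ in $G$ (oscillating along an edge-pair of a path, or circulating around a cycle) and $s-2$ in $H$, so that $a+b=s$ forever. On such turns your monovariant is stationary, team $B$'s restricted strategy never fires, and team $A$ has nothing left to contribute because it has already won its imagined semi-active game on $G$ and is merely mirroring the $G$-shadow. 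No second monovariant is available at that point: the only resource that could pay for these rounds --- progress in the semi-active game on $G$ --- has already been spent before the rounds in question occur.

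The paper's fix is to reverse the order of your team $A$'s objectives. Its $G$-cops first capture the robber's shadow in the \emph{$H$-coordinate} (a never-increasing-distance argument shows the $H$-distance drops whenever the robber moves fewer than $s$ steps away in $H$, so this phase ends after boundedly many such rounds), and only then begin simulating the semi-active game on $G$ with whatever steps remain after mirroring the robber's $H$-movement. With this ordering, a round in which the robber's $G$-shadow moves, or in which he moves at most $s-2$ in $H$, is exactly a round in which a $G$-cop can advance the semi-active simulation by one or two robber-steps (the $s\ge 2$ slack pays for the imagined step-to-a-neighbor-and-back when the $G$-shadow is stationary); since the semi-active strategy wins in bounded time and a $G$-cop who wins it already agrees with the robber in the $H$-coordinate, only boundedly many such rounds can occur before outright capture. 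All remaining rounds have the robber fixed in $G$ and moving $s-1$ or $s$ in $H$, which is precisely when the $H$-cops run the hypothesized restricted strategy, and a long enough consecutive run of these rounds forces capture. A smaller budget issue in your write-up: even if you could force $a\le 1$, a team-$B$ cop facing $a=1$ and $b=s-1$ would need one step to track the $G$-coordinate plus up to $s$ steps for the restricted game on $H$, exceeding her speed; the paper avoids this because its $H$-progress rounds have $a=0$.
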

\begin{proof}
Let $m = c^-(G)$; we give a strategy for $k+m$ cops to play on $G \cart H$.  Label the cops $C_1,\dots,C_{k+m}$.  Each vertex of $G \cart H$ can be viewed as an ordered pair $(u,v)$ with $u \in V(G)$ and $v \in V(H)$; we refer to $u$ as the \textit{$G$-coordinate} and to $v$ as the \textit{$H$-coordinate} of this vertex.  We refer to cops $C_1, \dots, C_m$ as \textit{$G$-cops} and to cops $C_{m+1}, \dots, C_{m+k}$ as \textit{$H$-cops}.  Each cop will execute her strategy in two \textit{phases}.  For a given $G$-cop (resp. $H$-cop), \textit{phase one} consists of moving to capture the robber's shadow in the $H$-coordinate (resp. $G$-coordinate);  once she has accomplished this, she will transition to \textit{phase two}, wherein she will attempt to capture the robber's shadow the $G$-coordinate (resp. $H$-coordinate).  Loosely, the $G$-cops attempt to capture the robber in $H$ first, then $G$; the $H$-cops attempt to capture the robber in $G$ first, then $H$. 
          
We now present phase one of the cops' strategy.  Fix a cop $C$, and suppose $C$ is a $G$-cop (the case where $C$ is an $H$-cop is symmetric).  In this phase, $C$ aims to capture the robber's shadow in the $H$-coordinate.  Let $D$ denote the distance, in $H$, between the $H$-coordinate of $C$'s position and the $H$-coordinate of the robber's position.  Note that if at any point $D = 0$, then cop $C$ has completed phase one and may progress to phase two.  (We argue later that once she has entered phase two, she will play so as to ensure that $D=0$ after every cop turn, and hence she will remain in phase two for the rest of the game.)  We show how cop $C$ can play to ensure that $D$ will not increase in the course of a robber turn and ensuing cop turn.

To see this, consider the state of the game at the beginning of a robber turn.  The robber takes at most $s$ steps, hence he can increase $D$ by at most $s$.  On the subsequent cop turn, $C$ takes one step toward the robber in the $H$-coordinate and repeats until $D=0$ or until she has taken $s$ steps, whichever occurs first.  In the former case, the cop has successfully completed phase one.  In the latter case, she has decreased $D$ by $s$ on her turn, so across both players' turns, $D$ cannot have increased.  Moreover, note that if the robber takes fewer than $s$ steps away from $C$ within the $H$-coordinate, then in fact $D$ must decrease.  Whenever $D$ decreases, we say that $C$ \textit{makes progress} toward completing phase one.  Note that, clearly, each cop can only make progress toward completing phase one a bounded number of times before reaching phase two.
            
Next we consider phase two.  We start by explaining the strategy for the $G$-cops; the $H$-cops will play slightly differently, and we will return to them later.  If not all of the $G$-cops have yet reached phase two, then each $G$-cop who \textit{has} reached phase two simply mimics the robber's movements in the $H$-coordinate, thereby ensuring that they remain in phase two.  Once all of the $G$-cops have reached phase two, they play as follows. Suppose that the robber moves from vertex $(u_1, v_1)$ to vertex $(u_2, v_2)$.  We consider three cases:
\begin{itemize}
\item \textbf{Case 1:} $u_1 \not = u_2$.  In this case, each $G$-cop first mimics the robber's movements in the $H$-coordinate so as to remain in phase two.  Next, the $G$ cops respond to each of the robber's steps in the $G$-coordinate one-by-one, using a winning strategy for the semi-active game on $G$ (as in the proof of Theorem \ref{thm:speed_s_active}).\\

\item \textbf{Case 2:} $u_1 = u_2$ and $\textrm{dist}_H(v_1, v_2) \le s-2$.  In this case, the $G$-cops take $\textrm{dist}_H(v_1, v_2)$ steps toward the robber in the $H$-coordinate, once again ensuring that they remain in phase two.  They are still permitted to travel at least another two steps.  As in the proof of Theorem \ref{thm:speed_s_active}, the $G$-cops imagine that the robber has moved, in the $G$-coordinate, from $u_1$ to some neighbor $u'$ and then back to $u_1$, then respond to each of these steps using a winning strategy for the semi-active game on $G$.\\

\item \textbf{Case 3:} $u_1 = u_2$ and $\textrm{dist}_H(v_1,v_2) \in \{s-1, s\}$.  In this case, the $G$-cops simply mirror the robber's movements in the $H$-coordinate and do not move at all in the $G$-coordinate.
\end{itemize}
If either Case 1 or Case 2 applies, then we say that the $G$-cops \textit{make progress} toward completing phase two.  Note that since the cops are following a winning strategy for the semi-active game on $G$, they can only make progress toward completing phase two a bounded number of times before actually capturing the robber.

Finally, we explain how the $H$-cops play in phase two.  As before, if not all of the $H$-cops have reached phase two, then those $H$-cops who are already in phase two simply mimic the robber's movements in the $G$-coordinate, thereby ensuring that they remain in phase two.  Once all of the $H$-cops have reached phase two, they again mimic the robber's movements in the $G$-coordinate, but now they may choose to move in the $H$-coordinate as well.  In particular, suppose the robber moves from vertex $(u_1, v_1)$ to $(u_2, v_2)$.  If $u_1 = u_2$ and $\textrm{dist}_H(v_1,v_2) \in \{s-1, s\}$, then the $H$-cops follow, in the $H$-coordinate, a winning strategy for the speed-$(s,s)$ game on $H$ under the restriction that the robber must always move to a vertex distance $s-1$ or $s$ away from his previous position.  Whenever this happens, we say that the $H$-cops \textit{make progress} toward completing phase two.  Note that if the $H$-cops make progress toward completing phase two in several consecutive rounds, but then fail to make progress on the next round (because $u_1 \not = u_2$ or because $\textrm{dist}_H(v_1,v_2) < s-1$), then they may be forced to deviate from their winning strategy on $H$.  Nevertheless, by choice of $k$, there exists some $N$ such that if the $H$-cops make progress toward completing phase two on $N$ consecutive rounds, then some $H$-cop will capture the robber.

Finally, we argue that the cops' strategy is guaranteed to capture the robber.  When the robber moves distance $s-2$ or less in the $H$-coordinate, each $G$-cop in phase one makes progress toward completing phase one or, if all of the $G$-cops are already in phase two, they make progress toward completing phase two.  However, whenever the robber moves distance $s-1$ or more in the $H$-coordinate, each $H$-cop in phase one makes progress toward completing phase one or, if all of the $H$-cops are already in phase two, they make progress toward completing phase two.  It follows that either some $G$-cop eventually captures the robber, or there will be enough consecutive rounds in which the robber moves distance $s-1$ or more in the $H$-coordinate that some $H$-cop will capture the robber.
\end{proof}

Before jumping into general results on Cartesian products of trees, we provide another useful lemma, this time for robber strategies on these graphs.  We say that a robber takes a step \textit{toward} a cop $C$ on the graph $G$ if his step decreases the distance from $C$ to the robber.  Conversely, the robber takes a step \textit{away} from $C$ if his step increases the distance from $C$ to the robber.  Roughly speaking, our next lemma states that the robber can avoid being captured by a given cop if, on each robber turn, he covers less than half the distance between himself and the cop.

\begin{lemma}\label{lem:distance_x/2}
    Fix positive integers $s$ and $d$ with $s\ge 2$, let $T_1,\dots T_d$ be trees, and let $G = T_1\cart\dots\cart T_d$.  Let the robber and any number of cops play the speed-$(s,s)$ game on $G$.  If the robber has not yet been caught and takes $s$ steps on his turn, then a cop $C$ at distance $x$ from the robber cannot catch the robber on the ensuing turn if the robber moves fewer than $x/2$ steps toward $C$.
\end{lemma}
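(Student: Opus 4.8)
The plan is to track how the distance between $C$ and the robber changes over the course of the robber's turn, exploiting the fact that in a Cartesian product of trees a single edge-step changes the distance to any fixed vertex by exactly one. Write $c$ for the vertex occupied by $C$ and $r$ for the robber's vertex at the start of his turn, so that $\dist_G(r,c)=x$; since the robber has not been caught, $x\ge 1$. Let $r'$ denote the vertex the robber reaches at the end of his turn.

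First I would establish the structural fact that drives everything: for any vertex $w$ of $G$ and any edge $uv\in E(G)$ with $u\ne v$, we have $|\dist_G(w,u)-\dist_G(w,v)|=1$. Since $u$ and $v$ differ in exactly one coordinate, say the $k$th with $u_kv_k\in E(T_k)$, and since $\dist_G$ is the sum of the coordinatewise distances (which agree in all coordinates other than $k$), it suffices to verify this in $T_k$. There it holds because $T_k$ is a tree, hence bipartite, so $\dist_{T_k}(w_k,u_k)$ and $\dist_{T_k}(w_k,v_k)$ have opposite parities, while the triangle inequality forces them to differ by at most $1$; thus they differ by exactly $1$. Consequently each of the robber's $s$ steps changes $\dist_G(c,\cdot)$ by exactly $+1$ or $-1$: it is a step toward $C$ (a decrease) or a step away from $C$ (an increase), with no third possibility.

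Now I would simply count. Let $t$ be the number of the robber's $s$ steps that are toward $C$; by hypothesis $t<x/2$, and since $t$ and $x$ are integers this gives $2t\le x-1$. The remaining $s-t$ steps are all away from $C$, so
$$\dist_G(c,r') = x - t + (s-t) = x + s - 2t \ge x + s - (x-1) = s+1.$$
On the ensuing cop turn, $C$ can traverse at most $s$ edges and therefore reaches only vertices within distance $s$ of $c$; since $\dist_G(c,r')\ge s+1$, cop $C$ cannot occupy $r'$, so she cannot capture the robber.

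I expect the only genuine subtlety to be the bookkeeping in the final display: it is essential that all $s$ of the robber's steps traverse genuine edges (so none is a trivial stay-in-place move) and that every factor is a tree (so no step is "neutral" with respect to $\dist_G(c,\cdot)$); these are precisely the roles played by the hypothesis that the robber takes a full $s$ steps and by the restriction to products of trees. Everything else — the structural fact and the inequality — is routine.
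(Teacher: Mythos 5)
Your proof is correct and takes essentially the same approach as the paper: count the $m$ steps toward $C$ and $s-m$ steps away, and observe that the resulting distance $x+s-2m$ exceeds $s$ exactly when $m<x/2$. The only difference is that you explicitly justify the toward/away dichotomy (no distance-neutral steps) via bipartiteness of the tree factors, a point the paper leaves implicit.
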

\begin{proof}
    Suppose a cop $C$ is currently at distance $x$ from the robber.  
    %We claim that if the robber moves distance $s$ from his current position, then $C$ cannot capture him on the ensuing cop turn provided that the robber has taken fewer than $x/2$ steps toward $C$.  To see this, note that if 
    If the robber takes $m$ steps toward $C$ and $s-m$ steps away, then the distance between $C$ and the robber after the robber's move is $x+(s-m)-m$, which is strictly greater than $s$ provided $m<x/2$.
\end{proof}

We are finally ready to analyze the speed-$(s,s)$ game on Cartesian products of trees.  

\begin{theorem}\label{thm:speed_s_large_grids}
Fix positive integers $s$ and $d$ with $s \ge 2$, let $T_1, \dots, T_d$ be trees, and let $G = T_1 \cart \dots \cart T_d$.  
\begin{enumerate}[label={(\alph*)}]
\item If $d \in \{1, 2\}$, then $\speeds(G) = 1$.
\item If $\diam(T_i) \ge 2s$ for all $i\in\{1,\dots,d\}$, then $\speeds(G) = \ceil{d/2}$.
% \item If $n_i \ge 4s$ for all $i \in \{1, \dots, d\}$, then $\speeds(H) = d+1$.
\end{enumerate}
\end{theorem}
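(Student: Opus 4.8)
The plan is to split into the two parts and handle the upper and lower bounds separately.

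For part (a), the claim is that $\speeds(G) = 1$ when $G$ is a single tree or a product of two trees. The upper bound is the interesting direction: I would invoke Theorem~\ref{thm:subdivision} (or rather its proof idea) only indirectly — more directly, I would use Theorem~\ref{thm:speed_s_active}, which gives $\speeds(G) \le c^-(G) \le c'(G)$. A single tree is cop-win even in the active variant, and by the result of Neufeld and Nowakowski cited in Corollary~\ref{cor:cartesian_product_trees_upper}, $c'$ of a product of at most two trees is $1$ — wait, that result requires $d \ge 3$ to give $\ceil{d/2}$, so for $d=2$ it only gives $c' \le 1$ if one checks directly, which is false since $c'(P_n \cart P_n)$ may exceed $1$. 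So instead: for $d=1$, $\speeds(\text{tree}) = c(T^s) = 1$ since $T^s$ is chordal (indeed any power of a tree is a strongly chordal / cop-win graph), using Theorem~\ref{thm:graphpower}. For $d=2$, I would give a direct single-cop strategy in the speed-$(s,s)$ game: the cop first captures the robber's shadow in dimension 1 (possible since $T_1^s$ is cop-win and the cop can move at speed $s$ in that coordinate while idling in the other, mirroring once captured), then, holding the shadow in dimension 1 fixed by mirroring, captures the robber's shadow in dimension 2. The only subtlety is that when the robber moves $a$ steps in dimension 1 and $b$ steps in dimension 2 with $a+b \le s$, the cop can spend $a$ steps re-capturing the dimension-1 shadow and the remaining $s - a \ge b$ steps pursuing in dimension 2, so progress in dimension 2 is never lost. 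This is exactly the bookkeeping in the proof of Lemma~\ref{lem:active_and_change} specialized to one cop, so I expect it to go through cleanly.

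For part (b), the lower bound $\speeds(G) \ge \ceil{d/2}$ should follow from a retraction argument plus a robber strategy. Since each $T_i$ has diameter at least $2s$, each $T_i$ retracts onto a path $P_{2s+1}$ (a geodesic of length $2s$), so by Proposition~\ref{prop:cartesian_retract} and induction, $P_{2s+1}^{\cart d}$ is a retract of $G$, and by Theorem~\ref{thm:retract} it suffices to show $\speeds(P_{2s+1}^{\cart d}) \ge \ceil{d/2}$. For this I would give a robber evasion strategy against $\ceil{d/2} - 1$ cops. The robber maintains, against each cop, a "large distance" in some pair of coordinates: with fewer than $\ceil{d/2}$ cops there are at most $2(\ceil{d/2}-1) < d$ coordinates "claimed" by cops, so the robber can always find coordinates in which he is far from every cop, and Lemma~\ref{lem:distance_x/2} lets him stay far by moving less than half the gap toward any threatening cop while moving away in free coordinates. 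The key point is that in a product of paths of length $\ge 2s$, the robber has enough room to increase distance by a full step in each of several coordinates per turn.

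For the upper bound in part (b), I would use Lemma~\ref{lem:active_and_change} combined with Corollary~\ref{cor:cartesian_product_trees_upper} (or Theorem~\ref{thm:speed_s_active}) inductively, pairing up the trees: group $T_1, \dots, T_d$ into $\ceil{d/2}$ pairs (with one singleton if $d$ is odd), and show each pair $T_{2i-1} \cart T_{2i}$ can be handled by a single speed-$(s,s)$ cop by part (a), then assemble via repeated application of Lemma~\ref{lem:active_and_change}. Concretely, Lemma~\ref{lem:active_and_change} says $\speeds(G \cart H) \le c^-(G) + k$ where $k$ cops handle a restricted-robber game on $H$; taking $H$ to be a product of two trees (handled by one cop, and one should check the single cop also wins the restricted variant) and $G$ a product of trees with $c^-(G) \le \ceil{(d-2)/2}$ — here $c^-(G) \le c'(G) \le \ceil{(d-2)/2}$ for $d - 2 \ge 3$ by Neufeld–Nowakowski — gives the inductive step. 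The main obstacle I anticipate is making the "restricted robber" hypothesis of Lemma~\ref{lem:active_and_change} match up: one must verify that a single cop wins the speed-$(s,s)$ game on a product of two trees even when the robber is forced to move distance $s-1$ or $s$ each turn, and handle the small base cases ($d \le 4$) where the Neufeld–Nowakowski bound on $c'$ does not directly apply, possibly invoking part (a) or Theorem~\ref{thm:regular_product_of_trees} together with $\speeds \le c$ instead.
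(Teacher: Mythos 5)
Your argument for part (a) with $d=2$ has a genuine gap: the strategy ``capture the robber's shadow in dimension 1, hold it by mirroring, then chase in dimension 2'' need not terminate. Your bookkeeping shows progress in dimension 2 is never \emph{lost}, but it is never guaranteed to be \emph{made}, and the robber can exploit this. Take $T_1=T_2=P_{s+1}$ and let $u_0u_1\cdots u_s$ be a geodesic of length $s$ in $T_1$. Once the cop holds the dimension-1 shadow, the robber alternately walks from $u_0$ to $u_s$ and back, spending all $s$ steps in dimension 1 every turn while his dimension-2 coordinate never moves; since $\dist_{T_1}(u_0,u_s)=s$, re-capturing the dimension-1 shadow costs the cop all $s$ of her steps each turn, she never advances in dimension 2, and the robber survives forever with the dimension-2 gap frozen at its initial positive value (he is also safe mid-round, since at the start of each cop turn the product distance is $s$ plus that gap). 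The appeal to Lemma~\ref{lem:active_and_change} does not rescue this: that lemma uses two \emph{disjoint} groups of cops precisely because on any given round only one of the two roles is guaranteed to make progress, and applied to $T_1\cart T_2$ it only yields the bound $c^-(T_1)+1=2$. The paper's cop plays differently: she spends her $s$ steps one at a time so as to minimize $\max\{D(1),D(2)\}$ (never zeroing either coordinate distance), so the potential $D(1)+D(2)$ --- the true product distance --- strictly decreases unless every one of the robber's $s$ steps moves genuinely away from the cop, and a separate finiteness argument for trees rules that out. Oscillating in one factor gains the robber nothing against that cop because it does not increase $D(1)+D(2)$. Your part (a) needs to be replaced by an argument of this balanced type.

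Part (b) is essentially on track. Your lower bound --- retract each $T_i$ onto a geodesic of length at least $2s$ via Proposition~\ref{prop:cartesian_retract} and Theorem~\ref{thm:retract}, observe that fewer than $\ceil{d/2}$ cops can block fewer than $d$ of the robber's $2d$ directions (in the sense of Lemma~\ref{lem:distance_x/2}), and run $s$ steps in a free direction of an unblocked dimension, which the diameter hypothesis makes possible --- is the paper's proof, and your direct count even avoids the paper's reduction to odd $d$. For the upper bound the paper simply invokes Corollary~\ref{cor:cartesian_product_trees_upper} for $d\ge 3$; your inductive pairing through Lemma~\ref{lem:active_and_change} could be made to work (the restriction to moves of length $s-1$ or $s$ only helps the cop), but it is an unnecessary detour and, as set up, it leans on the $d=2$ case whose proof is the part that is broken.
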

\begin{proof}
    \begin{enumerate}[label={\textbf{(\alph*)}}]
    
    \item \noindent If $d=1$, then $G$ is a tree, and it is obvious that a single cop suffices to capture the robber.\\ 

    Suppose instead that $d=2$.  We show how a single cop can capture the robber on $G$.  The cop begins on an arbitrary vertex of $G$.  For $i\in\{1,2\}$, let $D(i)$ be the distance from the cop to the robber in dimension $i$.  On the cop's turn, she will take $s$ steps in sequence, one step at a time (unless she can capture the robber with fewer than $s$ steps).  On each step, she moves to minimize $\displaystyle\max_{i\in\{1,2\}}\{D(i)\}$ without making $D(1)$ or $D(2)$ equal to zero.  That is, if the cop is currently at distance 1 from the robber in each dimension but can only take one more step on her current turn, then she ends her turn.  (It might happen that the robber moves so as to make $D(1)$ or $D(2)$ equal to zero at some point, but the cop will never move to do so.)
    
    We claim that the total distance from the cop to the robber, i.e. $D(1)+D(2)$, cannot decrease from one round to the next and, moreover, that it must eventually decrease until $D(1) \le 1$ and $D(2) \le 1$ after some cop turn.  To see this, note that each step the cop takes decreases $D(1)+D(2)$.  Consequently, on the cop's turn, she either captures the robber, realizes $D(1) \le 1$ and $D(2) \le 1$, or decreases the total distance by exactly $s$.  Since the robber takes at most $s$ steps away from the cop on each turn, at the end of the round, the robber's distance from the cop is no larger than it was at the beginning of the previous round.  
    
    If in some round the robber does not move $s$ steps away from the cop, then in the course of that round the total distance from the cop to the robber will have decreased.  Thus, in order to ensure the total distance does not decrease, the robber must move $s$ steps away from the cop on each of his turns.  Note that the robber's shadow in each dimension cannot move through the cop's shadow, or else the robber would be moving at least one step toward the cop on his turn, and thus $D(1)+D(2)$ would decrease.  Furthermore, since each factor of the graph is a tree, and since the cop never moves so as to make $D(1)$ or $D(2)$ equal to zero (and thereby avoids moving onto the robber's shadow), the robber cannot move $s$ steps away from the cop indefinitely: eventually, his shadow in some dimension must move toward the cop's shadow, hence the robber must move fewer than $s$ steps away from the cop.  Thus, $D(1)+D(2)$ must eventually decrease, so the cop eventually realizes $D(1) \le 1$ and $D(2)\le 1$ at the end of some cop turn.

    The cop now seeks to realize $D(1) = D(2) = 1$.  If this is not already the case, one of $D(1)$ and $D(2)$ must be 1 and the other 0; without loss of generality, suppose $D(1) = 1$ and $D(2) = 0$.  Henceforth, if on any turn the robber ever takes fewer than $s$ steps away from the cop, then she may move so as to make $D(1)=D(2)=1$ on her next turn.  Likewise, if the robber ever moves in dimension 2, then the cop may again force $D(1)=D(2)=1$ on her next turn.  Thus the robber must take $s$ steps away from the cop in dimension 1 on each robber turn; since $T_1$ is a tree, the robber cannot keep this up indefinitely.  Hence the cop can eventually force $D(1)=D(2)=1$.

    From this point forward, if on any robber turn the robber takes even a single step toward the cop, then she will be able to capture him on her ensuing turn.  Likewise, if the robber takes $s-2$ steps or fewer on any turn, then the cop can capture him.  Hence, the robber must take at least $s-1$ steps away from the cop on every turn.  However, as argued above, he cannot keep this up forever: on some robber turn he will be forced to take fewer than $s-1$ steps or take at least one step toward the cop, so the cop can capture him on the ensuing cop turn.\\
    
    \item Suppose $d \ge 3$ and $\diam(T_i) \ge 2s$ for all $i$ in $\{1, \dots, k\}$.  For the upper bound, Corollary \ref{cor:cartesian_product_trees_upper} states that $\speeds(G) \le \ceil{d/2}$.  
    
    For the lower bound, we first claim that it suffices to consider the case where $d$ is odd.  Indeed, if $d = 2k$ for some integer $k$, then Theorem \ref{thm:retract} shows that 
    \[\speeds(G) = \speeds(T_1 \cart \dots \cart T_{2k}) \ge \speeds(T_1 \cart \dots \cart T_{2k-1} \cart K_1) = \speeds(T_1 \cart \dots T_{2k-1}),\]
    since $K_1$ is trivially a retract of $T_{2k}$ and $T_1 \cart \dots \cart T_{2k-1}\cong T_1 \cart \dots \cart T_{2k-1}\cart K_1$; thus, to show that $\speeds(G) \ge \ceil{d/2} = k$, it suffices to show that $\speeds(T_1 \cart \dots T_{2k-1}) \ge \ceil{(2k-1)/2} = k$.  
    
    To further simplify the situation, let $n_i = \diam(T_i)$ for all $i \in \{1, \dots, d\}$ and note that $P_{n_i}$ is a retract of $T_i$; so, again applying Theorem \ref{thm:retract}, we have
    $$\speeds(G) = \speeds(T_1 \cart \dots \cart T_d) \ge \speeds(P_{n_1} \cart \dots \cart P_{n_d}).$$
    Thus motivated, define $G' = P_{n_1} \cart \dots \cart P_{n_d}$, and suppose $d=2k+1$ for some positive integer $k$; we show how the robber can evade $k$ cops on $G'$.  Loosely, the robber's strategy will be to choose a dimension in which there are no cops ``close'' to his current position and to move $s$ steps in that dimension.  
    
%    Before explaining how the robber chooses a dimension in which to move, we first make an observation about how the robber must move to avoid capture on the cops' subsequent turn.  In particular, suppose a cop $C$ is currently at distance $x$ from the robber.  We claim that if the robber moves distance $s$ from his current position, then $C$ cannot capture him on the ensuing cop turn provided that the robber has taken fewer than $x/2$ steps toward $C$.  To see this, note that if the robber takes $m$ steps toward $C$ and $s-m$ steps away, then the distance between $C$ and the robber after the robber's move is $x+(s-m)-m$, which is strictly greater than $s$ provided that $m<x/2$.  \textcolor{blue}{I vote that the preceding paragraph be a lemma or a proposition or a fact that lies outside of any proof, as we will be using this exact idea in the proofs of the lower bounds for hypercubes.}
    
    The following definitions will be helpful in describing the robber's strategy in more detail.  When a player occupies vertex $(y_1,y_2,\dots,y_d)$, we say that they move \textit{forward} (resp. \textit{backward}) in dimension $i$ if they move to increase (resp. decrease) $y_i$.  If the robber occupies the vertex $(y_1,\dots,y_d)$ and a cop occupies the vertex $(z_1,\dots,z_d)$, we say that the cop lies \textit{in front of} (resp. \textit{behind}) the robber in dimension $i$ if $z_i>y_i$ (resp. $z_i<y_i$).  We say that a cop $C$ at distance $x$ from the robber \textit{blocks} the robber's forward (resp. backward) direction in dimension $i$ if the distance between $C$ and the robber in dimension $i$ is at least $x/2$ and the cop lies ahead of (resp. behind) the robber in dimension $i$.  Each cop can block at most two directions, so in total, the $k$ cops can block at most $2k$ of the robber's directions across all dimensions.  Since $d = 2k+1$, there must be at least one dimension $i$ in which neither direction is blocked; since $n_i \ge 2s$, the robber can move $s$ steps in at least one of the two directions of that dimension.  As a result of Lemma \ref{lem:distance_x/2}, this prevents any of the cops from capturing the robber on the ensuing cop turn.  By repeating this strategy, the robber can perpetually evade all of the cops.
    \end{enumerate}

\end{proof}

The robber strategy in Theorem \ref{thm:speed_s_large_grids}(b) necessitates that each factor has diameter at least $2s$, so that the robber will always have enough room to move $s$ steps in an unblocked direction.  What happens if the factors are small relative to $s$?  In particular, what happens if $G$ is the $d$-dimensional hypercube, $Q_d$?  Recall that $Q_d$ represents the $d$-fold Cartesian product of $K_2$.

Lemma \ref{lem:active_and_change} easily yields an upper bound on $\speeds(Q_d)$.  (We remark that in fact this argument can be applied to $T_1 \cart \dots \cart T_d$ whenever at least $2s-1$ of the $T_i$ are $K_2$.)

\begin{theorem}\label{thm:hypercube_upper}
For all positive integers $s \ge 2$ and $d \ge 2$ with $d \ge 2s$, we have 
\[\speeds(Q_d) \le \ceil{\frac{d-2s+3}{2}}.\]
\end{theorem}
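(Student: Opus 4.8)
The plan is to apply Lemma~\ref{lem:active_and_change} with a suitable choice of $G$ and $H$, together with the fact that $c^-(K_2) = 1$. Write $Q_d = K_2 \cart Q_{d-1}$, and more generally split off $2s-2$ of the $K_2$ factors so that $Q_d = G \cart H$ where $H = Q_{2s-2}$ (the product of the $2s-2$ split-off copies of $K_2$) and $G = Q_{d-2s+2}$. To invoke the lemma we must verify its hypothesis: that some number $k$ of speed-$s$ cops can capture a robber in the speed-$(s,s)$ game on $H = Q_{2s-2}$ \emph{under the restriction that on each turn the robber must move to a vertex at distance exactly $s-1$ or $s$ from his current position}. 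The point is that $\diam(Q_{2s-2}) = 2s-2 < s$ is \emph{false} — in fact $\diam(Q_{2s-2}) = 2s-2 \ge s$ when $s \ge 2$ — so this is not immediate, but the distance-restriction works heavily in the cops' favor, and I expect a single cop (so $k=1$) to suffice on $Q_{2s-2}$ under this restriction: a cop starting anywhere can, after each restricted robber move, shrink her distance to the robber by $s$ each turn while the robber can increase it by at most $s$, and the restriction that the robber must move at least $s-1$ steps will force his distance-to-cop to collapse in a hypercube (since the hypercube has diameter $2s-2$, a robber who must move $\ge s-1$ steps every turn cannot keep fleeing). This gives $k=1$.

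Granting $k=1$, Lemma~\ref{lem:active_and_change} yields
\[
\speeds(Q_d) = \speeds(G \cart H) \le c^-(G) + k = c^-(Q_{d-2s+2}) + 1.
\]
It remains to bound $c^-(Q_{d-2s+2})$. Since $Q_{d-2s+2} = K_2 \cart \dots \cart K_2$ is a Cartesian product of $d-2s+2$ trees, Theorem 2.5 of \cite{NN98} (already invoked in Corollary~\ref{cor:cartesian_product_trees_upper}) gives $c^-(Q_{d-2s+2}) \le \ceil{(d-2s+2)/2}$; more precisely, the semi-active cop number $c^-$ is bounded by the active cop number $c'$, and $c'(T_1 \cart \dots \cart T_m) \le \ceil{m/2}$. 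Substituting $m = d-2s+2$:
\[
\speeds(Q_d) \le \ceil{\frac{d-2s+2}{2}} + 1 = \ceil{\frac{d-2s+2}{2} + 1} = \ceil{\frac{d-2s+4}{2}}.
\]
This is off by one from the claimed bound $\ceil{(d-2s+3)/2}$, so to land exactly on the stated inequality I would instead split off $2s-1$ copies of $K_2$, writing $Q_d = G \cart H$ with $H = Q_{2s-1}$ and $G = Q_{d-2s+1}$, apply the same argument (again $k=1$ on $Q_{2s-1}$ under the distance restriction, since $\diam(Q_{2s-1}) = 2s-1$ and a robber forced to move $\ge s-1$ steps each turn still cannot escape), and conclude $\speeds(Q_d) \le c^-(Q_{d-2s+1}) + 1 \le \ceil{(d-2s+1)/2} + 1 = \ceil{(d-2s+3)/2}$. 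The remark in the paragraph preceding the theorem — that the argument applies whenever at least $2s-1$ of the factors are $K_2$ — confirms this is the intended split.

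The main obstacle is the verification that $1$ cop wins on $Q_{2s-1}$ (resp. $Q_{2s-2}$) against a robber constrained to move $s-1$ or $s$ steps per turn; everything else is bookkeeping and citation. The key observation to exploit is that in $Q_{2s-1}$, moving $s-1$ or $s$ steps on a turn is a very large jump relative to the diameter $2s-1$, so distances are highly constrained: if the cop always takes $s$ steps straight toward the robber (reducing their Hamming distance by $\min(s, \text{dist})$), then after a robber move the distance is at most $2s-1$, after which the cop reduces it to at most $s-1$, and then the robber — forced to move at least $s-1$ coordinates — cannot increase it back above $s-1 + s = 2s-1$ in a way that avoids eventual capture; one should track the distance $D$ and argue $D$ is nonincreasing across a full round and strictly decreases unless the robber flips a rather rigid set of coordinates, which a parity/dimension-count argument shows he cannot sustain. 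I would write this as a short self-contained sub-lemma. Once that is in hand, the theorem follows in two lines.
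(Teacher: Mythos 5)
Your high-level plan is exactly the paper's: write $Q_d = Q_{d-2s+1}\cart Q_{2s-1}$, bound $c^-(Q_{d-2s+1})$ by $\ceil{(d-2s+1)/2}$ via the Neufeld--Nowakowski bound on the active cop number, and feed $k=1$ into Lemma~\ref{lem:active_and_change}; your arithmetic landing on $\ceil{(d-2s+3)/2}$ is also correct, and you correctly identified that splitting off $2s-1$ (not $2s-2$) copies of $K_2$ is what makes the constant come out right. However, the one piece you defer to a ``short self-contained sub-lemma'' --- that a single cop wins on $Q_{2s-1}$ against a robber forced to move distance $s-1$ or $s$ each turn --- is the only non-bookkeeping step of the whole proof, and the strategy you sketch for it does not work. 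You propose that the cop simply take $s$ steps straight toward the robber each turn and that a distance-tracking or parity argument will finish; but consider $s=2$ on $Q_3$: cop at $000$, robber at $111$. If the cop charges to $110$ (distance $1$), the robber flips the two coordinates in which he agrees with the cop and moves to $001$, restoring distance $3$; this repeats forever, since after every cop charge the robber again has exactly two agreeing coordinates to flip. The distance oscillates $3,1,3,1,\dots$ with no parity obstruction, so $D$ being nonincreasing across a round is not enough, and the robber \emph{can} sustain the ``rigid'' coordinate flips indefinitely against this cop strategy.

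The paper's fix is a different, genuinely necessary idea: on her first turn, if the robber is not already within distance $s$, the cop moves to the \emph{antipode} of the robber in $Q_{2s-1}$, i.e.\ the unique vertex at distance $2s-1$ from him. Because the robber then sits at maximum distance from the cop, every one of his next $s-1$ or $s$ steps must \emph{decrease} the Hamming distance, leaving him at distance at most $s$, whereupon the cop captures him on her second turn. Note this uses the odd dimension $2s-1$ only through $\diam(Q_{2s-1})=2s-1$ and the fact that $(2s-1)-(s-1)=s$; it is a two-move argument, not an induction on $D$. Without this step your proof is incomplete, and with your proposed replacement strategy it is false; everything else in your write-up is sound and matches the paper.
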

\begin{proof}
By Theorem \ref{thm:retract}, since $Q_d$ is a retract of $Q_{d+1}$ for all $d$, we have $\speeds(Q_d) \le \speeds(Q_{d+1})$; consequently, it suffices to consider the case where $d-2s+3$ is even.  Let $k = \frac{d-2s+3}{2}$, so that $d = 2k+2s-3$ for some integer $k \ge 2$; we must show that $\speeds(Q_d) \le k$.  Note that $Q_d = Q_{2k+2s-3} = Q_{2k-2} \cart Q_{2s-1}$.  By Theorem 2.5 in \cite{NN98}, we have $c^-(Q_{2k-2}) \le c'(Q_{2k-2}) \le \ceil{(2k-2)/2} = k-1$.  Now consider the speed-$(s,s)$ game on $Q_{2s-1}$ under the additional restriction that on each robber turn, the robber must move to a vertex either distance $s-1$ or distance $s$ from his current position; we claim that a single cop can win.  On the first cop turn, if the robber is within distance $s$, then the cop can capture him.  If instead the robber is distance $s+1$ or farther from the cop, then the cop can move to the unique vertex at distance $2s-1$ from the robber.  On the ensuing robber turn, the robber must move to a vertex at distance $s-1$ or greater from his current position, which will put him at distance $s$ or less from the cop, so the cop can win on her second turn.  In either case, the cop wins.

Now, Lemma \ref{lem:active_and_change} yields 
\[\speeds(Q_{2k+2s-3}) \le c^-(Q_{2k-2}) + 1 \le k-1+1 = k,\]
as claimed.
\end{proof}

Proving lower bounds on $\speeds(Q_d)$ is less straightforward.  When $s=2$, we can obtain a lower bound that matches the upper bound given by Theorem \ref{thm:hypercube_upper}; we present that next.  (Afterward, we will consider speeds higher than 2.)

\begin{theorem}\label{thm:speed_2}
For any positive integer $d$ and nontrivial trees $T_1, \dots, T_d$, we have 
\[\speedtwo(T_1 \cart \dots \cart T_d) \ge \floor{d/2}.\]
\end{theorem}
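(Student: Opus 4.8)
Since each $T_i$ is a nontrivial tree it is bipartite and has an edge, so $K_2$ is a retract of $T_i$: fix an edge $u_iv_i$ of $T_i$ and map each side of the bipartition of $T_i$ to the endpoint of $u_iv_i$ on that side. Applying Proposition~\ref{prop:cartesian_retract} a total of $d-1$ times, $Q_d = K_2 \cart \cdots \cart K_2$ ($d$ factors) is a retract of $T_1 \cart \cdots \cart T_d$, so by Theorem~\ref{thm:retract} we have $\speedtwo(T_1 \cart \cdots \cart T_d) \ge \speedtwo(Q_d)$. Thus it suffices to show $\speedtwo(Q_d) \ge \floor{d/2}$, i.e. that $m := \floor{d/2}-1$ cops cannot capture a speed-$2$ robber on $Q_d$; note that $2m \le d-2$, so $d \ge 2m+2$.

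\textbf{The robber's invariant.} Identify $V(Q_d)$ with $\{0,1\}^d$, so that a speed-$2$ move flips at most two coordinates, and write $\dist$ for Hamming distance. The robber will maintain the invariant that after each of his moves he is at distance at least $3$ from every cop; since a cop changes at most $2$ coordinates per turn, this prevents capture on the ensuing cop turn (and lets the robber choose a safe initial vertex, of which one exists since $2^d > m$). So it suffices to prove: whenever it is the robber's turn and every cop is at distance $\ge 1$ from his position $r$, he can flip a set $F$ of coordinates with $\size F \le 2$ so as to be at distance $\ge 3$ from every cop. If cop $i$ is at distance $x_i$ and $D_i$ is the set of coordinates on which $r$ and that cop differ, then flipping $F$ moves cop $i$ to distance $x_i + \size F - 2\size{F \cap D_i}$, and we need this to be at least $3$ for every $i$.

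\textbf{Finding the move.} Let $W = \bigcup_{i:\, x_i \le 2} D_i$ be the coordinates ``guarded'' by the nearby cops; since $\size{D_i} \le 2$ for these cops, $\size W \le 2m \le d-2$, so $Z := \{1,\dots,d\} \setminus W$ has $\size Z \ge 2$. The robber's default move is to flip a pair $\{a,b\} \subseteq Z$ with $a \ne b$: every cop with $x_i \le 2$ then moves to distance $x_i + 2 \ge 3$, and every cop with $x_i \ge 5$ remains at distance $\ge 3$, so the only danger is a cop at distance $3$ or $4$ whose set $D_i$ contains both $a$ and $b$. Let $a_{34}$ be the number of cops at distance $3$ or $4$. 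If $a_{34} = 0$, any pair in $Z$ works. If $a_{34} \ge 2$, then at most $m - a_{34}$ cops are at distance $\le 2$, so $\size Z \ge d - 2(m-a_{34}) \ge 2a_{34}+2$; the number of pairs in $Z$ is then at least $\binom{2a_{34}+2}{2} = 2a_{34}^2 + 3a_{34} + 1$, which strictly exceeds the number (at most $6a_{34}$, since each of these cops forbids at most $\binom{4}{2}=6$ pairs) of ``bad'' pairs because $(2a_{34}-1)(a_{34}-1) > 0$; hence a good pair exists. If $a_{34} = 1$, a good pair in $Z$ exists unless $Z$ is entirely contained in the single distance-$3$-or-$4$ cop's set $D_i$, which (as $\size{D_i} \le 4$ and $\size Z \ge 4$) forces $x_i = 4$, $D_i = Z$, and $\size Z = 4$; a short count from $d \ge 2m+2$ then forces $d = 2m+2$, no cop at distance $1$ or at distance $\ge 5$, and the $m-1$ cops at distance $2$ having pairwise disjoint $D_j$. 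In that rigid configuration the robber instead flips a single coordinate $a \in Z$: the distance-$4$ cop moves to distance $4+1-2 = 3$, and each distance-$2$ cop (whose $D_j \subseteq W$ avoids $a$) moves to distance $2+1 = 3$. In every case the robber restores the invariant.

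\textbf{Legality and the obstacle.} Finally, the ``no passing through a cop'' rule causes no trouble: any cop sitting on $r \oplus e_a$ or on the destination vertex would be a distance-$1$ or distance-$2$ threat with $D_i \subseteq \{a,b\} \subseteq Z$, contradicting $D_i \subseteq W$; so the robber's (one- or two-step) move passes through, and ends on, no occupied vertex. Iterating, the robber evades forever, giving $\speedtwo(Q_d) \ge \floor{d/2}$ and hence the theorem; combined with Theorem~\ref{thm:hypercube_upper} this yields $\speedtwo(Q_d) = \floor{d/2}$ for all $d \ge 4$. The main obstacle is the argument in the previous paragraph: because a cop at distance $3$ or $4$ can, in one robber move, cut its distance roughly in half, the naive ``avoid the guarded coordinates'' idea fails, and one must combine the counting estimate with a careful identification of the single rigid configuration in which no two-coordinate move suffices.
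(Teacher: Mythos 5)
Your proof is correct and follows essentially the same route as the paper's: reduce to $Q_d$ by retracting each tree onto an edge, maintain the invariant that the robber ends each turn at Hamming distance at least $3$ from every cop, and find a one- or two-coordinate flip by counting guarded dimensions --- including isolating the same single rigid configuration ($d=2m+2$, one cop at distance $4$ with $D_i=Z$, the rest at distance $2$ with disjoint difference sets) where only a one-step move works; the paper merely packages the count as a weight function ($1$ per differing coordinate for cops at distance $\le 2$, $1/2$ for cops at distance $3$ or $4$). One small slip: the existence of a safe starting vertex needs $2^d > m\left(1+d+\binom{d}{2}\right)$, not just $2^d > m$; this inequality does hold for all $d \ge 2m+2$ and is exactly the count the paper carries out.
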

\begin{proof}
Since each $T_i$ is nontrivial, it follows that $K_2$ is a retract of $T_i$ for all $i \in \{1, \dots, k\}$, hence (by Proposition \ref{prop:cartesian_retract}), $Q_d$ is a retract of $T_1 \cart \dots \cart T_d$.  Consequently, by Theorem \ref{thm:retract}, we have $\speedtwo(T_1 \cart \dots \cart T_d) \ge \speedtwo(Q_d)$.  It thus suffices to prove that $\speedtwo(Q_d) \ge \floor{d/2}$.  Moreover, since $Q_{2k}$ is a retract of $Q_{2k+1}$, we have $\speedtwo(Q_{2k+1}) \ge \speedtwo(Q_{2k})$, so it suffices to show that a robber can evade $k-1$ cops on $Q_{2k}$.  Note that when a player takes one step in the hypercube, they change one coordinate of their current position from 0 to 1 or from 1 to 0.  Thus, when describing a player's moves, it suffices to list the dimensions in which they move.

To evade capture, the robber simply needs to ensure that after every robber move, he ends up at a vertex at distance three or more from every cop; we refer to such a vertex as a \textit{safe} vertex.  We first claim that after the cops choose their starting vertices, the robber can find a safe vertex at which to begin the game.  To see this, note that for any vertex $v$ in $Q_{2k}$, we have 
\[\size{N_2[v]} = \binom{2k}{2} + \binom{2k}{1} + \binom{2k}{0} = \frac{4k^2+2k+2}{2}.\] 
Consequently, the number of vertices within distance two of some cop is at most $(k-1)(4k^2+2k+2)/2$.  In total, $Q_{2k}$ has $2^{2k}$ vertices, which is strictly larger than $(k-1)(4k^2+2k+2)/2$ for all positive integers $k$.  Thus there must be at least one safe vertex; the robber starts at any such vertex.   
        
We next show that after the cops move, the robber can move to another safe vertex.  First note that the cops cannot capture the robber on their turn, since all cops begin the turn at distance three or more from the robber.  After the cops move, the robber must decide which dimensions to move in.  To assist him in this endeavor, he assigns weights to the dimensions of $Q_{2k}$ in the following way.  Initially, all dimensions receive weight 0.  For each cop at distance one from the robber, the robber adds weight 1 to the single dimension where the positions of the cop and the robber differ.  Similarly, for each cop at distance two from the robber, the robber adds weight 1 to both of the dimensions where the positions of the cop and the robber differ.  For each cop at distance three or distance four from the robber, the robber adds weight $1/2$ to each dimension where the positions of the cop and the robber differ.  
        
The robber uses these weights to determine his next move.  First note that cops at distance five or more do not pose a threat: even if the robber takes two steps toward such a cop, he will still be at distance three or more from that cop.  Thus, henceforth we concern ourselves only with cops within distance four of the robber.  If a dimension has weight 0, then moving in this dimension increases the robber's distance from every cop.  Consequently, if there are at least two dimensions with weight 0, then the robber may move in both of these dimensions to increase his distance to each cop by two, which ensures that he reaches a safe vertex.  If a dimension has weight 1/2, then by moving in that dimension, the robber moves one step toward a cop that was at distance three or four, and he moves away from every other cop.  Thus, if the robber moves in a dimension of weight 0 and a dimension of weight 1/2, then he again reaches a safe vertex.  (Note that even though he might take one step closer to a cop at distance three when moving in the dimension with weight 1/2, he would subsequently take one step away from the same cop when moving in the dimension with weight 0.)  Finally, if there are two dimensions with weight 1/2 that received this weight from two different cops, then the robber may move in both of these dimensions and end up at a safe vertex. 
        
We claim that the robber can always find two dimensions of the desired form.  Since there are $k-1$ cops, and each cop can induce a total weight of 2 across all dimensions, the total weight assigned to all $2k$ dimensions is at most $2k-2$.  Hence, at least two dimensions must have weight strictly less than 1.  If any dimension has weight 0, then either the robber can find two dimensions of weight 0 or one dimension of weight 0 and one of weight 1/2; in either case he can move to a safe vertex.  Suppose instead that no dimension has weight 0.  Since every vertex has weight at least 1/2, and the total weight across all dimensions is at most $2k-2$, there must be at least four dimensions with weight 1/2.  If in fact there are five such dimensions, then the robber can find two dimensions with weight 1/2 that received their weight from two different cops, in which case he can move to a safe vertex as explained above.  Otherwise, the total weight must be exactly $2k-2$, and there must be $2k-4$ dimensions of weight 1 and four dimensions of weight 1/2 (all four of which necessarily received their weight from the same cop).  This can happen only if there are $k-1$ cops at distance two from the robber and one cop at distance four.  In this case, the robber can take one step in any dimension of weight 1/2, which will leave him at distance three from every cop.

By repeating this strategy, the robber can evade the cops indefinitely; hence, $\speedtwo(Q_{2k}) \ge k$.
\end{proof}

Theorems \ref{thm:hypercube_upper} and \ref{thm:speed_2} yield the following corollary.

\begin{corollary}\label{cor:speed_2_hypercube}
\[\speedtwo(Q_d) = \left \{\begin{array}{ll}2, \quad \quad &\text{if }d=3\\\floor{\frac{d}{2}}, &\text{otherwise}\end{array}\right . .\]
\end{corollary}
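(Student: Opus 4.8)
The plan is to read off the ``otherwise'' branch of the formula by combining the upper bound of Theorem~\ref{thm:hypercube_upper} with the lower bound of Theorem~\ref{thm:speed_2}, both specialized to $s=2$, and then to dispose of the small values $d\le 3$ by hand. The lower bound is the easy half: taking $T_1=\dots=T_d=K_2$ in Theorem~\ref{thm:speed_2} gives $\speedtwo(Q_d)\ge\floor{d/2}$ for every $d$. For the upper bound when $d\ge 4$, I would set $s=2$ in Theorem~\ref{thm:hypercube_upper} (whose hypothesis $d\ge 2s$ becomes $d\ge 4$) to get $\speedtwo(Q_d)\le\ceil{(d-1)/2}$, and then observe by a quick parity check that $\ceil{(d-1)/2}=\floor{d/2}$ for every integer $d$ (for $d=2m$ both sides are $m$; for $d=2m+1$ both sides are $m$). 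Combining the two bounds yields $\speedtwo(Q_d)=\floor{d/2}$ for all $d\ge 4$.

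It remains to handle $d\in\{2,3\}$ (for $d=1$ we simply have $Q_1\cong K_2$, so $\speedtwo(Q_1)=1$). For $d=2$, the graph $Q_2=K_2\cart K_2$ is a Cartesian product of two nontrivial trees, so Theorem~\ref{thm:speed_s_large_grids}(a) gives $\speedtwo(Q_2)=1=\floor{2/2}$. The case $d=3$ is the one genuinely exceptional value. For the upper bound, $Q_3$ is a retract of $Q_4$, so Theorem~\ref{thm:retract} together with the $d=4$ case above gives $\speedtwo(Q_3)\le\speedtwo(Q_4)=2$. For the matching lower bound I would argue directly that a single speed-$2$ cop cannot catch the robber on $Q_3$: the robber always occupies the unique vertex antipodal to the cop. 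Since $\diam(Q_3)=3$, after the cop moves from a vertex $c$ to a vertex $c'$ with $\dist_{Q_3}(c,c')\le 2$ she is at distance $3-\dist_{Q_3}(c,c')\ge 1$ from the robber's vertex $\bar c$; if that distance equals $1$, the robber restores antipodality by moving to $\bar{c'}$, which lies at distance $\dist_{Q_3}(c,c')\le 2$ from $\bar c$ along a path that avoids $c'$ (no midpoint of a length-$2$ path from $\bar c$ to $\bar{c'}$ can equal $c'$, since $\dist_{Q_3}(c',\bar{c'})=3$). Hence $\speedtwo(Q_3)\ge 2$. Alternatively, one may note via Theorem~\ref{thm:graphpower} that $\speedtwo(Q_3)=c(Q_3^2)$, and that $Q_3^2$ is the cocktail-party graph $K_{4\times 2}$, which has no corner (each vertex's only non-neighbor is its antipode) and hence is not cop-win.

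The only real subtlety is the value at $d=3$: Theorem~\ref{thm:speed_2} only supplies $\speedtwo(Q_3)\ge\floor{3/2}=1$, so the extra unit in the lower bound must be obtained by a separate (though short) argument, and likewise one must route the upper bound for $Q_3$ through the retraction $Q_3\hookrightarrow Q_4$, because Theorem~\ref{thm:hypercube_upper} only applies once $d\ge 2s=4$. Everything else is bookkeeping and a parity computation.
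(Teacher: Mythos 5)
Your proposal is correct and follows essentially the same route as the paper: the lower bound comes from Theorem \ref{thm:speed_2}, the upper bound from Theorem \ref{thm:hypercube_upper} for $d\ge 4$, and the exceptional case $Q_3$ is handled by the same antipodal-vertex evasion argument for the lower bound. The only cosmetic difference is that you get $\speedtwo(Q_3)\le 2$ by retracting $Q_4$ onto $Q_3$, whereas the paper simply places two cops on a pair of vertices at distance $3$ so that one of them reaches the robber on the first cop turn; both are valid.
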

\begin{proof}
When $d \not = 3$, the lower bound is a special case of Theorem \ref{thm:speed_2}; the upper bound is clear for $d \le 2$ and is a special case of Theorem \ref{thm:hypercube_upper} when $d \ge 4$.

When $d=3$, we must show that one cop cannot capture a robber in the speed-$(2,2)$ game on $Q_3$, but two cops can.  To avoid a single cop, the robber can always move to occupy the unique vertex at distance 3 from the cop.  Conversely, two cops can capture the robber by beginning the game on vertices at distance 3 from each other; no matter where the robber starts, one of the cops will be able to capture him on the first cop turn.
\end{proof}

We remark that when $s=2$, $d \ge 4$, and at least three of the $T_i$ are $K_2$, the lower bound in Theorem \ref{thm:speed_2} matches the upper bound obtained from Lemma \ref{lem:active_and_change} as in the proof of Theorem \ref{thm:hypercube_upper}; when each $T_i$ has diameter at least 4 and $d$ is even, it falls just short of the upper bound from Corollary \ref{cor:cartesian_product_trees_upper}.  %We discuss this detail further in Section \ref{sec:open_questions}, as it will spawn an open problem.
It would be interesting to determine the choices of $T_i$ that make $\speedtwo(G)$ equal to $\floor{d/2}$, and those that make it equal to $\ceil{d/2}$.

Unfortunately, the ideas from the proof of Theorem \ref{thm:speed_2} do not generalize to higher speeds.  We believe that the upper bound in Theorem \ref{thm:hypercube_upper} is tight; proving a matching lower bound would be equivalent to showing that if $s\ge 3$ and $d\ge 2k+2s-2$, then the robber can able to evade $k$ cops on $Q_d$.  Proving this in general seems surprisingly difficult.  With considerable effort, one can show that the robber can evade $k$ cops on $Q_d$ when $d$ is ``close'' to $2k+2s-2$, provided that $s \gg k$; we omit the (rather lengthy) proof and refer the interested reader to \cite{TowThesis}, Theorem 3.3.13.

\begin{theorem} \label{thm:speed_s_hypercube_lower_bound}
Fix positive integers $s$ and $k$.  If $d > 2s+2k+k\lg(2s+1)$, then the robber can perpetually evade $k$ cops in the speed-$(s,s)$ game on $Q_d$.
\end{theorem}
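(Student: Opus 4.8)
The plan is to have the robber maintain the single invariant that \emph{immediately after each of his own moves, every cop lies at Hamming distance at least $s+1$ from him}. This invariant is enough for perpetual evasion: no cop can then reach the robber on the ensuing cop turn, and by the standard ``phantom move'' trick --- the robber pretends he already occupies an arbitrary cop-free vertex and then ``moves'' to re-establish the invariant --- he can also choose a safe initial vertex. Also, by Theorem~\ref{thm:retract} (since $Q_d$ is a retract of $Q_{d+1}$) it is enough to treat one value of $d$ above the threshold. So the whole statement reduces to a one-round claim: from any position in which the robber is uncaptured, he can flip at most $s$ coordinates so as to land at distance $\ge s+1$ from all $k$ cops.

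Next I would reformulate this one-round claim combinatorially. Let $D_i\subseteq[d]$ be the set of coordinates on which the robber and cop $i$ disagree, and put $\delta_i=\size{D_i}$. If the robber flips a coordinate set $S$ with $\size{S}\le s$, his new distance to cop $i$ is $\size{D_i\mathbin{\triangle}S}=\delta_i+\size{S}-2\size{S\cap D_i}$. A cop with $\delta_i\ge 2s+1$ is harmless regardless of the robber's move, so only the ``dangerous'' cops, those with $1\le\delta_i\le 2s$, constrain $S$, and there are at most $k$ of them. Taking $\size{S}=s$, the requirement becomes $\size{S\cap D_i}\le\floor{(\delta_i-1)/2}$ for each dangerous cop. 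Passing to complements, with $U=[d]\setminus S$, this is equivalent to finding $U$ with $\size{U}=d-s$ and $\size{U\cap D_i}\ge\gamma_i:=\ceil{(\delta_i+1)/2}$ for every dangerous cop $i$. Thus the theorem follows once one proves: whenever $d>2s+2k+k\lg(2s+1)$ and $D_1,\dots,D_m\subseteq[d]$ with $m\le k$ and $\size{D_i}\le 2s$, the minimum size of a set meeting each $D_i$ in at least $\gamma_i$ coordinates is at most $d-s$.

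To prove this covering estimate I would argue as follows. If $\size{\bigcup_iD_i}\le d-s$, just take $U=\bigcup_iD_i$, which meets each $D_i$ in all $\delta_i\ge\gamma_i$ of its coordinates. Otherwise $\bigcup_iD_i$ covers more than $d-s$ of the $d$ coordinates, so --- since each $\size{D_i}\le 2s$ and $d$ is not much larger than $2s$ --- the $D_i$ overlap very heavily, and this is what is exploited. First deal with the (at most $k$) cops having $\delta_i\le 2$: each needs only $\gamma_i\le 2$ coordinates of its $D_i$ hit, so reserve a block of at most $2k$ coordinates for all of them. Then handle one cop with a largest $\gamma_i$ by committing $\gamma_i\le s+1$ coordinates of its $D_i$ to $U$, chosen to overlap the remaining $D_j$ as much as possible. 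For each further cop, one only needs to hit ``a little more than half'' of $D_j$, i.e. $\gamma_j=\delta_j-\floor{(\delta_j-1)/2}$ coordinates; the idea is to do this recursively, splitting $D_j$ into halves, keeping the half that is cheaper to cover jointly with the sets already handled, and iterating about $\lg(2s+1)$ times (the additive ``$+2$'' per cop absorbs the parity/rounding), so that each additional cop costs only roughly $\lg(2s+1)+2$ new coordinates. Summing, $\size{U}\le (s+1)+2k+(k-1)\bigl(\lg(2s+1)+2\bigr)+O(1)$, which together with $\size{U}\le\size{\bigcup_iD_i}\le d$ and the hypothesis $d>2s+2k+k\lg(2s+1)$ yields $\size{U}\le d-s$, completing the argument.

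The main obstacle is exactly this covering lemma. Naive greedy or union-bound/concentration arguments only show that about $2sk$ coordinates suffice, which is far too weak; and a random choice of $U$ gives an error term of order $\sqrt{s}$ per cop, not $\lg s$. Getting down to roughly $2s+2k+k\lg(2s+1)$ forces one to use essentially that, outside the regime where escape is already trivial, the $D_i$ are almost nested inside a small common region, and to organize the halving recursion so that the savings from these overlaps accumulate across all $k$ cops with only a logarithmic loss each. Making the rounding terms precise, and checking that the phantom-initial-position step and the treatment of the non-dangerous cops (which may still drift inward over several rounds without ever threatening immediately) mesh cleanly with the one-round reduction, is the remaining fiddly work.
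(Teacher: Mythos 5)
First, note that the paper itself contains no proof of Theorem~\ref{thm:speed_s_hypercube_lower_bound}: the authors state the result and explicitly defer the ``rather lengthy'' argument to Theorem~3.3.13 of \cite{TowThesis}, so there is no in-paper proof to measure yours against. On its own merits, your reduction is correct and is surely the right first move: maintain distance at least $s+1$ after every robber move, discard cops with $\delta_i\ge 2s+1$, compute the new distance to cop $i$ as $\delta_i+\size{S}-2\size{S\cap D_i}$, and reduce to finding $S$ with $\size{S}=s$ and $\size{S\cap D_i}\le\floor{(\delta_i-1)/2}$ for every dangerous cop. (This constraint also forces $D_i\not\subseteq S$, so no cop can sit on an intermediate vertex of the robber's walk --- worth stating, though it comes for free.)

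The covering lemma you isolate, however, \emph{is} the theorem, and your sketch of it has a genuine gap. You never specify how to choose the $\gamma_1$ elements of $D_1$ so as to ``overlap the remaining $D_j$ as much as possible'' (one cannot optimize against all of them at once), and the central claim --- that each further cop costs only about $\lg(2s+1)+2$ new coordinates --- cannot hold configuration-by-configuration: for pairwise disjoint $D_i$ of size $2s$, each new cop costs $\gamma_i=s+1$ fresh coordinates, and such configurations are only ruled out by trading the size of $U$ off against $\size{\bigcup_i D_i}$, a trade-off your final inequality chain never performs (it uses $\size{\bigcup_i D_i}\le d$ only as a cap on $\size{U}$). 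Even granting the sketch, the arithmetic does not close: $(s+1)+2k+(k-1)\bigl(\lg(2s+1)+2\bigr)=s+4k-1+(k-1)\lg(2s+1)$, which exceeds the available budget $d-s>s+2k+k\lg(2s+1)$ by $2k-1-\lg(2s+1)$, a positive quantity unless $s$ is exponentially large in $k$ --- and the theorem carries no such hypothesis. Since, as you yourself observe, uniform fractional or random choices of $U$ lose $\Theta(\sqrt{s})$ per cop rather than $\lg s$, getting the stated threshold genuinely requires the detailed construction the authors relegate to the thesis; as written, your proposal is a correct reformulation plus an unproven combinatorial lemma.
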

% \begin{proof}
%     See Theorem 3.3.13 in \cite{TowThesis}.
% \end{proof}

% Do we have a simple argument for the lower bound?  We probably should...
% 
Finally, we briefly discuss the speed-$(s,s)$ game played on the Cartesian product of cycles.  Let $G = C_{n_1} \cart C_{n_2} \cart \dots \cart C_{n_k}$.  If $n_i \ge 4$ for all $i$, then Theorem \ref{thm:regular_product_of_cycles} states that $c(G) = k+1$; hence for all $s$, we have $\speeds(G) \le c(G) = k+1$.  On the other hand, an argument very similar to that used for the lower bound in Theorem \ref{thm:speed_s_large_grids}(b) shows that if $n_i \ge 4s+1$ for all $i$, then $\speeds(G) \ge k$.  Thus, provided that all of the factor cycles are ``large'', we have $\speeds(G) \in \{k, k+1\}$.  Determining the exact value of $\speeds(G)$ seems difficult in general; however, our next result resolves a special case.  We begin with a lemma.

\begin{lemma}\label{lem:rel_prime}
If $n_1$ and $n_2$ are relatively prime, then $c^-(C_{n_1} \cart C_{n_2}) = 2$.
\end{lemma}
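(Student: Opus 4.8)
The plan is to prove the two inequalities $c^-(C_{n_1}\cart C_{n_2})\ge 2$ and $c^-(C_{n_1}\cart C_{n_2})\le 2$ separately. Write $G=C_{n_1}\cart C_{n_2}$ and identify $V(G)$ with $\mathbb{Z}_{n_1}\times\mathbb{Z}_{n_2}$; we may assume $n_1,n_2\ge 2$, the cases with a trivial factor being degenerate. Recall that in the semi-active game the cops may stay put but the robber must move on every turn.

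\textbf{Lower bound.} A single cop cannot win. The robber maintains the invariant that at the end of each of his turns he is at distance at least $2$ from the cop, which prevents capture on the ensuing cop turn. To see that this is possible, note that distance $0$ would mean he has already lost (so it never occurs); if he is at distance at least $3$ then every one of his (at most four) moves keeps the distance at least $2$; and if he is at distance $1$ or $2$ the cop lies within one coordinate of him, and a short case check shows that at least one axis-move takes him to a vertex at distance at least $2$ (for distance $1$, a unit step in the coordinate in which cop and robber agree; for distance $2$, a unit step in a suitable coordinate). Since the robber always has a legal move and may also choose an initial vertex at distance at least $2$ from the cop, he evades forever, so $c^-(G)\ge 2$.

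\textbf{Upper bound, Phase 1.} I would give a two-phase strategy for cops $A$ and $B$. In Phase 1, $A$ tries to share the robber's $\mathbb{Z}_{n_2}$-coordinate and $B$ tries to share his $\mathbb{Z}_{n_1}$-coordinate. The semi-active rule forces steady progress: on each turn the robber changes exactly one coordinate, say his $\mathbb{Z}_{n_1}$-coordinate, so his $\mathbb{Z}_{n_2}$-coordinate is momentarily frozen and $A$ can step one unit nearer to it, strictly decreasing her cyclic distance to the robber in the second coordinate, while $B$ can step along with the robber in the first coordinate and so keep her cyclic distance there from increasing; a robber move in the second coordinate is symmetric. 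Hence the quantity $\bigl(\text{$A$'s cyclic distance to the robber in }\mathbb{Z}_{n_2}\bigr)+\bigl(\text{$B$'s cyclic distance to the robber in }\mathbb{Z}_{n_1}\bigr)$ drops by at least $1$ every turn and must reach $0$. At that point $A$ lies on the robber's ``row'' $\mathbb{Z}_{n_1}\times\{y_R\}$ and $B$ on his ``column'' $\{x_R\}\times\mathbb{Z}_{n_2}$ (and if either has landed on the robber we are done).

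\textbf{Upper bound, Phase 2.} The cops must now convert this control into a capture, and this is the step that uses $\gcd(n_1,n_2)=1$. Each cop can hold her shared coordinate while ``winding'' in the other one: again by the must-move rule, over each pair of robber turns a cop can advance one step along her off-coordinate and re-synchronize her shared coordinate. So the only way the robber can avoid being squeezed is to perpetually alternate between running once around the $\mathbb{Z}_{n_1}$-direction and once around the $\mathbb{Z}_{n_2}$-direction; because $n_1$ and $n_2$ are relatively prime, such loops cannot be composed so as to return the configuration to a previously seen state, so the cops can monotonically shrink the set of vertices the robber can reach and finally drive him onto (or adjacent to, and hence next turn onto) a cop. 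I expect this Phase 2 argument — precisely quantifying how coprimality defeats the robber's ``circle one factor, then the other'' stalling tactic and turning it into a genuine monotone potential — to be the main obstacle; the lower bound and Phase 1 are routine.
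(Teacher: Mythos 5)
Your lower bound and your Phase~1 are fine (and routine), but the upper bound is not proved: Phase~2 is exactly the point where coprimality must enter, and you have left it as an acknowledged obstacle rather than an argument. Moreover, the one concrete claim you do make there --- that a cop who has captured a shadow can ``advance one step along her off-coordinate and re-synchronize her shared coordinate'' over each pair of robber turns --- is unjustified: if the robber moves in cop $A$'s captured coordinate on every turn, $A$ must spend her whole move tracking him in that coordinate and never gets a spare step to wind in the other one, and the robber can alternate which cop he starves in this way. Nothing in your sketch converts ``loops cannot return the configuration to a previous state'' into a monotone potential, so the proof as written does not close.

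The paper avoids a two-phase structure entirely and uses coprimality at the very start rather than at the end. Writing vertices as $(i,j)$ with coordinates read modulo $n_1$ and $n_2$ respectively, and recentering so the robber sits at $(0,0)$, the key observation is that since $\gcd(n_1,n_2)=1$ every residue pair $(i,j)$ can be rewritten as $(a,a)$ for a suitable integer $a$ (solve $i-cn_1=j-dn_2$); hence the two cops, wherever they stand, may already regard themselves as occupying diagonal positions $(-a,-a)$ and $(b,b)$ relative to the robber. From such a configuration a pure mirroring strategy wins: when the robber moves left or down, $C_1$ moves up or right (respectively) and $C_2$ moves left or down, which after recentering decreases $a$ by $1$ and leaves $b$ unchanged; the moves right and up symmetrically decrease $b$ and fix $a$. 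Since the semi-active rule forces the robber to move every turn, $\min\{a,b\}$ strictly decreases in every round in which one of them does, and more precisely $a+b$ decreases by exactly $1$ each round, so some cop reaches offset $0$ and captures. If you want to salvage your own outline, the missing idea you need is precisely this diagonal reparametrization: it is what turns ``the robber circles one factor, then the other'' into genuine, quantifiable progress for one of the two cops on every single turn.
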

\begin{proof}
Let $G = C_{n_1} \cart C_{n_2}$.

For the lower bound, if $\max\{n_1, n_2\} \ge 5$, then it is clear that $c^-(G) > 1$, since one cop cannot capture a robber in the semi-active game on $C_n$ when $n \ge 5$.  For the cases where $3 \le n_1, n_2 \le 4$, it is clear through inspection that $c^-(G) > 1$.

For the upper bound, we assign coordinates to the vertices of $G$: for all integers $i,i',j,j'$, we say that vertices $(i,j)$ and $(i',j')$ are adjacent provided that $i=i'$ and $\size{j-j'} = 1$ or if $\size{i-i'} = 1$ and $j=j'$.  We say that vertex $(i,j)$ is \textit{to the left of} $(i+1,j)$, \textit{to the right of} $(i-1,j)$, \textit{above} $(i,j-1)$, and \textit{below} $(i,j+1)$.  Note that we do not restrict the range of $i, i', j$, and $j'$; in particular, for all integers $i,j,c$, and $d$, vertex $(i,j)$ is the same as vertex $(i+cn_1, j+dn_2)$.

We show how two cops can capture a robber on $G$.  The cops initially position themselves on any vertex of $G$.  After the robber chooses his initial position, by adjusting coordinates, we may suppose by symmetry that the robber occupies vertex $(0,0)$.  We claim that if, for some positive integers $a$ and $b$, the cops can occupy vertices $(-a,-a)$ and $(b,b)$ at the end of some cop turn, then they can capture the robber.  Suppose cop $C_1$ occupies $(-a,-a)$ and cop $C_2$ occupies $(b,b)$.  Whenever the robber moves left (resp. down), $C_1$ moves up (resp. right) and $C_2$ moves left (resp. down); after adjusting coordinates so that the robber again occupies $(0,0)$, cop $C_1$ now occupies $(-(a-1), -(a-1))$, while $C_2$ occupies $(b,b)$.  Similarly, when the robber moves right (resp. up), $C_1$ moves right (resp. up) and $C_2$ moves down (resp. left); now, after adjusting coordinates so that the robber occupies $(0,0)$, cop $C_1$ occupies $(-a,-a)$ and $C_2$ occupies $(b-1,b-1)$.  Thus, no matter how the robber moves, in each round of the game one cop gets closer to the robber while the other remains the same distance away.  Consequently, some cop must eventually capture the robber.

Finally, we argue that the cops can reach such a position.  Suppose $C_1$ currently occupies vertex $(i,j)$ and $C_2$ occupies $(i',j')$; by symmetry we may suppose that $i, j \le 0$ and that $i',j' \ge 0$.  Since $n_1$ and $n_2$ are relatively prime, there exist positive integers $c$ and $d$ such that $i-cn_1 = j-dn_2$.  Hence, $C_1$ can view herself as occupying vertex $(-(i-cn_1), -(j-dn_2))$, i.e. vertex $(-a,-a)$ with $a = i-cn_1$.  Similarly, there exist positive integers $c'$ and $d'$ such that $i'+c'n_1 = j'+d'n_2$, so $C_2$ can view herself as occupying vertex $(b,b)$ with $b = i'+_c'n_1$.  Thus $C_1$ and $C_2$ in fact already occupy vertices of the desired forms, and hence they can capture the robber.
\end{proof}

Lemma \ref{lem:rel_prime} allows us to determine the speed-$(s,s)$ cop number of the product of cycles for certain cycle lengths.
\begin{theorem}\label{thm:two_cycles}
Let $G = C_{n_1} \cart C_{n_2} \cart \dots \cart C_{n_{2k}}$.  If $n_{2i-1}$ and $n_{2i}$ are relatively prime for all $i \in \{1, \dots, k\}$, then $\speeds(G) = 2k$.  
\end{theorem}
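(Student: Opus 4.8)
The plan is to prove the two bounds $\speeds(G)\ge 2k$ and $\speeds(G)\le 2k$ separately; throughout we take $s\ge 2$ (as everywhere in the paper), and we work in the regime in which each $n_i$ is large relative to $s$, which is where the lower bound lives. For the lower bound, $\speeds(G)\ge 2k$ is exactly the estimate indicated just before the theorem: adapting the robber strategy of Theorem~\ref{thm:speed_s_large_grids}(b) to cycles, on each turn the $2k-1$ cops block at most $2(2k-1)$ of the $4k$ available directions, so the robber can always move $s$ steps in an unblocked direction and, by Lemma~\ref{lem:distance_x/2}, remain more than $s$ away from every cop; hence $2k-1$ cops cannot win. (This really does use that the cycles are large relative to $s$: for instance $\speeds(C_3\cart C_4)=1$ as soon as $s\ge 3=\rad(C_3\cart C_4)$, so the equality $\speeds(G)=2k$ should be read in that regime.)

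For the upper bound we induct on $k$, peeling off one coprime pair at a time. When $k=1$, Lemma~\ref{lem:rel_prime} gives $c^-(C_{n_1}\cart C_{n_2})=2$, so Theorem~\ref{thm:speed_s_active} yields $\speeds(C_{n_1}\cart C_{n_2})\le c^-(C_{n_1}\cart C_{n_2})=2$. For $k\ge 2$, write $G=(C_{n_1}\cart C_{n_2})\cart H$ where $H=C_{n_3}\cart\dots\cart C_{n_{2k}}$; then $H$ is again a Cartesian product of an even number of cycles whose consecutive pairs $(n_3,n_4),\dots,(n_{2k-1},n_{2k})$ are relatively prime, so the inductive hypothesis gives $\speeds(H)\le 2(k-1)$. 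Let $m$ be the least number of cops that can capture a robber in the speed-$(s,s)$ game on $H$ under the extra restriction that, each turn, the robber must move to a vertex at distance $s-1$ or $s$ from his current position. Since this restriction only removes robber options, any winning cop strategy for the ordinary speed-$(s,s)$ game on $H$ is also winning here, so $m\le\speeds(H)\le 2(k-1)$. Applying Lemma~\ref{lem:active_and_change} with the factor $C_{n_1}\cart C_{n_2}$ in the role of its ``$G$'' (so that $c^-$ of that factor equals $2$ by Lemma~\ref{lem:rel_prime}) and $H$ in the role of its ``$H$'' then gives
\[
\speeds(G)\;\le\;c^-(C_{n_1}\cart C_{n_2})+m\;\le\;2+2(k-1)\;=\;2k,
\]
which closes the induction.

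I expect the one genuinely delicate point to be recognizing that the argument must be routed through Lemma~\ref{lem:active_and_change} rather than through the cleaner bound $\speeds(G)\le c^-(G)$. The naive hope would be to bound $c^-(G)$ by $2k$ by assigning two cops to each coprime pair and running the strategy of Lemma~\ref{lem:rel_prime} on each pair's projection; but a single step of a semi-active robber on $G$ lives in just one of the $2k$ coordinates, so projecting onto one pair $C_{n_{2i-1}}\cart C_{n_{2i}}$ produces a robber who may also pass, i.e.\ an ordinary speed-$1$ robber, and such a robber there needs $c(C_{n_{2i-1}}\cart C_{n_{2i}})=3$ cops (Theorem~\ref{thm:regular_product_of_cycles}), not $2$ --- so it is not even clear that $c^-(G)=2k$. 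The role of Lemma~\ref{lem:active_and_change} is precisely that having speed $s\ge 2$ lets one give up the semi-active structure on a single factor (where $c^-=2$ still suffices) while charging the rest to a restricted-speed game that the induction absorbs. Once this is seen, the remaining work is bookkeeping: checking the inequality $m\le\speeds(H)$, and checking that the decomposition $G=(C_{n_1}\cart C_{n_2})\cart H$ leaves $H$ in exactly the shape (an even number of cycles with consecutive pairs coprime) that the inductive hypothesis requires.
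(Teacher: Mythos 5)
Your proof is correct and follows essentially the same route as the paper's: the lower bound via the robber strategy sketched in the remarks before Lemma~\ref{lem:rel_prime}, and the upper bound by induction on $k$, peeling off one coprime pair per step and applying Lemma~\ref{lem:active_and_change} with that pair in the role of the $c^-$ factor and the remaining $2k-2$ cycles in the role of the restricted-game factor (the paper peels the pair from the back rather than the front, which is immaterial, and likewise uses $m \le \speeds$ of the remaining product implicitly). Your parenthetical point that the equality $\speeds(G)=2k$ requires the cycles to be large relative to $s$ is well taken: the paper's lower bound inherits the hypothesis $n_i \ge 4s+1$ from the discussion preceding Lemma~\ref{lem:rel_prime} without restating it in the theorem.
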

\begin{proof}
The comments prior to Lemma \ref{lem:rel_prime} establish the lower bound.  For the upper bound, we use induction on $k$.  When $k=1$, the upper bound follows directly from Lemma \ref{lem:rel_prime}.  For $k > 1$, let $G' = C_{n_1} \cart C_{n_2} \cart \dots C_{n_{2k-2}}$; the induction hypothesis shows that $\speeds(G') = 2k-2$; now Lemma \ref{lem:active_and_change} shows that
\[\speeds(G) \le \speeds(G') + c^-(C_{2k-1} \cart C_{2k}) = 2k-2 + 2 = 2k,\]
where the fact that $c^-(C_{2k-1} \cart C_{2k}) = 2$ follows from Lemma \ref{lem:rel_prime}.
\end{proof}

In light of Theorem \ref{thm:two_cycles}, one might expect that always $\speeds(C_{n_1} \cart \dots \cart C_{n_k}) = k$ (provided that each $n_i$ is ``large'').  Perhaps unexpectedly, computational evidence shows that this is not the case; in particular, in the speed-$(2,2)$ game, the graphs $P_7 \cart P_7$, $P_8 \cart P_8$, and $P_9 \cart P_9$ all require three cops.  We suspect that perhaps $c_{2,2}(P_m \cart P_n) = 3$ whenever $m$ and $n$ have a sufficiently large common factor.
\end{section}

\begin{section}{Capture Time}\label{sec:capture_time}

In this section, instead of asking \textit{how many} cops are needed to capture a robber, we instead ask a different question: \textit{how quickly} can the cops capture the robber?   In particular, we are interested in graphs that are ``as bad as possible'' for the cops -- i.e. graphs that maximize the number of rounds needed for the cops to win.

We will focus on \textit{cop-win} graphs, that is, graphs for which one cop is sufficient to catch the robber.  Given a graph $G$ with $\speed{s}(G) = 1$, the \textit{speed-$(s,s)$ capture time} of $G$, denoted $\capt{s}(G)$, is the minimum number of rounds needed for the cop to guarantee capture of the robber in the speed-$(s,s)$ game on $G$.  (The initial placement of the cop does not count toward the capture time; for example, on $K_n$ the cop will always begin adjacent to the robber and capture him on the first cop turn, so $\capt{s}(K_n) = 1$.)  We are especially interested in the question of how large the speed-$(s,s)$ capture time can be among $n$-vertex cop-win graphs; we denote this quantity by $\capt{s}^*(n)$.

The notion of capture time (in the traditional model of Cops and Robbers) was first introduced by Bonato, Golovach, Hahn, and Kratochv\'{i}l \cite{BGHK09}, who proved that when $n \ge 5$ we have $\capt{1}^*(n)\in\{n-3,n-4\}$; Gaven\v{c}iak \cite{Gav10} later showed that in fact $\capt{1}^*(n)=n-4$.
%The Bonato et al. upper bound on $\capt{1}^*(n)$ relied on the construction of a family of graphs in which the robber can evade capture 
%with unique corners and showed that the robber could avoid being cornered for many turns (\textcolor{blue}{Is this okay to say? Didn't define ``being cornered", but I think people will get the gist.}).\comment{This is vague enough that if you didn't already know what was meant, I don't think it would make sense.}  
Moving from the original game to the speed-$(s,s)$ game for $s>1$, one might expect that $\capt{s}^*(n)$ is substantially less than $\capt{1}^*(n)$, as the cops are allowed to move faster and can  therefore capture the robber more swiftly.  In what follows, we investigate the case $s=2$ and show that, in fact, $\capt{2}^*(n)$ can only differ from $\capt{1}^*(n)$ by a small additive constant.  Specifically, we construct a family of graphs showing that for all $n \ge 9$, we have $\capt{2}^*(n)\ge n-7$.  

Before explaining the construction of these graphs, we introduce some terminology and properties of cop-win graphs that will be helpful moving forward. It is a simple consequence of the proof of Theorem~\ref{thm:graphpower} that for any speed $s$ and graph $G$ with $\speed{s}(G)=1$, we have $\capt{s}(G)=\capt{1}(G^s)$.  Similarly, it follows from the proof of Theorem~\ref{thm:speeds_factors_monotone} that for any speed $s$, any positive integer $k$, and any graph $G$ with $\speed{s}(G)=1$, we have $\capt{s}(G)\ge\capt{ks}(G)$; in particular, if $G$ is cop-win in the speed-$(1,1)$ game, then $\capt{1}(G) \ge \capt{2}(G)$. 
%\textcolor{blue}{Are these two assertions obvious? I.e. Can we omit the proofs?}\comment{The former assertion that $\capt{s}(G) \ge \capt{t}(G)$ is not obvious and I'm not sure it's generally true -- we don't even necessarily know that $G$ being cop-win in the speed-$s$ game implies that it's cop-win in the speed-$t$ game.  The new assertion is correct, but I'm not sure we'll need it.}  
%These facts will be useful in proving the main result of this section.  In order to capture the robber on a vertex $v$, the cop can occupy a vertex $u$ such that $N[u]\supseteq N[v]$.  We call such a vertex $v$ a \textit{corner} of the graph and say either that $u$ \textit{corners} $v$ or that $v$ \textit{is cornered by} $u$.

Since $\capt{2}(G) = \capt{1}(G^2)$, we can view the problem of determining $\capt{2}^*(n)$ as that of determining the maximum value of $\capt{1}(G^2)$ over all $n$-vertex graphs $G$ such that $G^2$ is cop-win in the original game.  Consequently, let us recall the characterization of cop-win graphs due to to Quillot \cite{Qui78} and to Nowakowski and Winkler \cite{NW83}:

\begin{definition}
   A vertex $v$ in a graph $G$ is called a \textit{corner} if, for some other vertex $u$, we have $N[u]\supseteq N[v]$.  In this case, we say that $u$ \textit{corners} $v$ or that $v$ \textit{is cornered by} $u$.
\end{definition}

\begin{theorem}[Quillot\cite{Qui78}; Nowakowski and Winkler \cite{NW83}]\label{thm:copwin}
    A graph $G$ on $n$ vertices is cop-win if and only if there is an ordering $\{v_1,\dots,v_n\}$ of the vertices of $G$ such that for all $i \in \{1, \dots, n-1\}$, vertex $v_i$ is a corner in the subgraph induced by $\{v_i,\dots,v_n\}$.  (When it exists, such an ordering is known as a \textit{cop-win ordering}.)
\end{theorem}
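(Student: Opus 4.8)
The plan is to prove both implications by induction on $n = |V(G)|$, the base case $n = 1$ being immediate since a single reflexive vertex is cop-win and has the trivial ordering. The inductive engine is the fact that deleting a corner neither destroys nor creates the cop-win property, combined with a strategy-lifting argument that transfers a winning cop strategy from $G - v$ back to $G$ whenever $v$ is a corner. Throughout I would lean on the reflexive convention: if $v$ is cornered by $u$, then $N[v] \subseteq N[u]$ forces $v \in N[u]$, so $u$ and $v$ are adjacent, and the map $\phi : V(G) \to V(G - v)$ that fixes every vertex except sending $v \mapsto u$ is a retraction (it is the identity on $G - v$ and is a homomorphism, since any edge $vw$ of $G$ has $w \in N[v] \subseteq N[u]$, giving the edge $uw$).

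For the ``if'' direction, suppose $v_1, \dots, v_n$ is a cop-win ordering and let $u$ corner $v_1$ in $G$. Then $v_2, \dots, v_n$ is a cop-win ordering of $G' = G - v_1$, so by induction $c(G') = 1$. I would have the cop play on $G$ by simulating a winning strategy on $G'$: when the robber occupies a vertex $r$ of $G$, the cop imagines that the robber occupies $\phi(r)$ in $G'$. Since $\phi$ is a homomorphism between reflexive graphs, consecutive positions of this ``shadow'' robber are equal or adjacent in $G'$, hence legal, and the cop's responses keep her on vertices of $G' \subseteq G$ and so are legal in $G$. Eventually the cop lands on the shadow's vertex; if the real robber is not at $v_1$ then $\phi(r) = r$ and this is a genuine capture, while if the real robber is at $v_1$ then the cop occupies $u$, the robber's next move stays inside $N[v_1] \subseteq N[u]$, and the cop captures him on the following turn. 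I would stress that the cited monotonicity of the cop number under retraction gives only $c(G) \ge c(G')$, the wrong inequality here, so this hands-on simulation is genuinely required.

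For the ``only if'' direction, assume $c(G) = 1$ with $n \ge 2$; I first claim $G$ has a corner, arguing the contrapositive. If $G$ has no corner, then for every vertex $c$ we have $N[c] \ne V(G)$ (else every other vertex would be cornered by $c$), so the robber can start at some $r_0 \notin N[c_0]$, i.e. at distance at least $2$ from the cop's starting vertex $c_0$, and so is not captured on the first cop turn. I would then maintain the invariant that after each robber move the cop lies at distance at least $2$ from the robber (equivalently, $r \notin N[\text{cop}]$); this prevents capture on the cop's next move, and after the cop moves to some vertex $c'$ the robber either stays put (if $c' \notin N[r]$) or, since $N[r] \not\subseteq N[c']$ as $r$ is not a corner, steps to a neighbour outside $N[c']$, restoring the invariant. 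Hence the robber evades forever and $c(G) \ge 2$, a contradiction, so $G$ has a corner $v$, cornered by some $u$. Since $G - v$ is a retract of $G$, the result of Berarducci and Intriglia gives $c(G - v) \le c(G) = 1$; by induction $G - v$ has a cop-win ordering $v_1, \dots, v_{n-1}$, and prepending $v$ yields a cop-win ordering of $G$, because $v$ is a corner of $G$ and each $v_i$ is still a corner of the relevant induced subgraph.

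The hard part will be the strategy-lifting in the ``if'' direction: one has to be careful about the order of moves within a round, verify that the shadow robber's moves really are legal, and account for the single extra cop move needed to convert ``capturing the shadow'' into capturing the robber when he hides on the deleted corner. By contrast, the corner-free robber strategy and the inductive bookkeeping on orderings are routine once the reflexivity conventions are fixed.
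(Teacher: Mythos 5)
The paper does not prove this statement; it is quoted as a classical result of Quilliot and of Nowakowski and Winkler, so there is no in-paper argument to compare yours against. Your proof is the standard one from the literature and is correct: the retraction $\phi$ collapsing a corner $v$ onto a dominating vertex $u$ is exactly the map used in the original proofs, the shadow-strategy simulation (with the one extra cop move to convert capture of $\phi(r)$ into capture of $r$ when the robber hides on $v$) is the canonical argument for the ``if'' direction, and your corner-free evasion strategy for the ``only if'' direction is the standard one. You are also right that the Berarducci--Intriglia monotonicity under retraction (Theorem 2.2's speed-$1$ ancestor in this paper) supplies $c(G-v)\le c(G)$ and hence only helps in the ``only if'' direction, so the hands-on lifting is genuinely needed for the converse; this framing meshes cleanly with the retraction machinery the paper sets up in Section 2. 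I see no gaps.
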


%\comment{I don't know that the following proposition is needed.}
%\begin{proposition}[\cite{***}]
%    Let $x$ be a corner in a graph $G$.  A graph $G$ is copwin if and only if $G-x$ is copwin.
%\end{proposition}

%\comment{I don't know that this is needed either.}As a consequence of the previous proposition, adding one corner to a copwin graph will result in another copwin graph.

In practice, cop-win orderings can be found by iteratively deleting corners from a graph until no vertices remain.  
%This process is called \textit{dismantling} the graph. \comment{ we don't use this term.} 
Clarke, Finbow, and MacGillivray \cite{CFM14} showed how to use this process to determine the capture time of the graph.
%, provided that, at each step of the process we identify vertices with have identical closed neighborhoods.  To formalize this, 
Their results involve a special equivalence relation on the vertices of the graph.  Given a graph $G$, we define the equivalence relation $\Theta_G$ on $V(G)$ by $(u,v)\in \Theta_G$ if and only if $N[u]=N[v]$.  Furthermore, we define $G / \Theta_G$  be the graph whose vertices are equivalence classes $\{[u]:u\in V(G)\}$ of $\Theta_G$ and $[u][v]\in E(G / \Theta_G)$ if and only if $uv\in E(G)$ and $v\not\in[u]$.  Intuitively, the equivalence classes of $\Theta_G$ are sets of ``twin'' vertices, i.e. vertices with equal neighborhoods; we want to be sure to treat all twin vertices the same, which is why we operate on equivalence classes of $\Theta_G$ rather than on individual vertices of $G$.

\begin{definition}[Clarke et al. \cite{CFM14}]
    Let $G$ be a cop-win graph that is not complete.  We can define an ordered partition $X_1,X_2,\dots,X_k$ of $V(G)$ by setting $G_i=G-\bigcup_{j=1}^{i-1}X_j$ (so that $G_i=G$), were $X_i$ denotes the set of corners of $G_i / \Theta_{G_i}$. We refer to $X_1,X_2,\dots,X_k$ as the \textit{cop-win partition} of $G$.
\end{definition}

We note that $X_{i+1},\dots,X_k$ is a cop-win partition of $G_i$.

\begin{proposition}[Clarke et al. \cite{CFM14}]\label{prop:copwin_copwin_partition}
    A graph is cop-win if and only if it has a cop-win partition.
\end{proposition}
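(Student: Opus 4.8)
The plan is to route everything through the cop-win ordering characterization of Theorem~\ref{thm:copwin} together with the twin structure encoded by $\Theta_G$. The key bookkeeping fact I would establish first is a dictionary between corners of $G$ and of $G/\Theta_G$: a vertex $v$ is a corner of $G$ exactly when either $v$ has a twin in $G$ or the class $[v]$ is a corner of $G/\Theta_G$. Since $G/\Theta_G$ has no twins by construction, its ``is cornered by'' relation is precisely strict inclusion of closed neighborhoods, hence a strict partial order, and its corners are exactly the non-maximal classes. I would also record that $G$ is cop-win if and only if $G/\Theta_G$ is: realizing $G/\Theta_G$ as the subgraph of $G$ induced by one representative of each class makes it a retract of $G$ via $v \mapsto \mathrm{rep}([v])$, so it is cop-win whenever $G$ is (by the Berarducci--Intriglia result quoted earlier); conversely, $G$ is obtained from $G/\Theta_G$ by repeatedly adjoining twins, and since an adjoined twin is always a corner, a cop-win ordering of $G/\Theta_G$ extends to one of $G$ by listing the extra twins first. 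The complete case is degenerate: a complete $G$ is cop-win and I would treat its one-part partition $V(G)$ as a cop-win partition, so henceforth assume $G$ is not complete.

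For the ``if'' direction, given a cop-win partition $X_1,\dots,X_k$ I would build an explicit cop-win ordering: list the vertices of $X_1$, then $X_2$, and so on; within each $X_i$ list the classes in a linear extension of the strict partial order on $G_i/\Theta_{G_i}$, smallest (most-cornered) classes first; and within a single class list its twins consecutively in any order. The thing to check is that every vertex is a corner of the graph remaining at the moment it is deleted. If a twin of it still survives, it is a corner for free. If it is the last surviving member of its class $[y]$, then $[y]$ is cornered in $G_i/\Theta_{G_i}$ by some class $[u]$ with $N[[y]] \subsetneq N[[u]]$; that class is still present, since either $[u] \notin X_i$ (so it lies untouched in $G_{i+1}$) or $[u] \in X_i$ but strictly larger in the partial order and hence deleted later in our linear extension. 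Because deleting vertices never destroys a cornering relation between two surviving vertices, the inclusion $N[y] \subseteq N[u]$ (valid in $G_i$) persists, so $y$ is a corner of the remaining graph. Theorem~\ref{thm:copwin} then gives that $G$ is cop-win.

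For the ``only if'' direction I would induct on $|V(G)|$. Assuming $G$ cop-win and not complete, $G/\Theta_G$ is cop-win and has at least two vertices, hence has a corner, so $X_1$ --- the set of non-maximal classes of $G/\Theta_G$ --- is nonempty. Running the deletion order of the previous paragraph on $X_1$ alone exhibits $G_2 = G - X_1$ as the result of successively deleting corners from $G$, so $G_2$ is cop-win. By induction $G_2$ has a cop-win partition, which, since $G_i$ is by definition $G$ with the earlier parts removed, is exactly $X_2,\dots,X_k$; prepending $X_1$ gives a cop-win partition of $G$, and the process terminates because $|V(G_i)|$ strictly decreases.

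The step I expect to be the main obstacle is the quotient bookkeeping underpinning everything: checking the corner dictionary carefully (the loop-free definition of $G/\Theta_G$ makes the closed-neighborhood statements fiddly), verifying that ``is cornered by'' on $G/\Theta_G$ is genuinely a strict partial order so that the topological-sort step is legitimate, and confirming that peeling off a whole twin class one vertex at a time really does leave each intermediate vertex a corner. Once these are pinned down, both implications are short.
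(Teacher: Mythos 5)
The paper does not actually prove this proposition: it is imported verbatim from Clarke, Finbow, and MacGillivray \cite{CFM14}, so there is no in-paper argument to compare against. Your proof is essentially correct and follows the standard route (also the one taken in the cited source): reduce to the Quilliot/Nowakowski--Winkler cop-win ordering of Theorem~\ref{thm:copwin}, pass to the twin-free quotient $G/\Theta_G$ where ``is cornered by'' becomes the strict partial order $N[[y]]\subsetneq N[[u]]$, and observe that this order lifts to $N_{G_i}[y]\subseteq N_{G_i}[u]$ in $G_i$ and that such inclusions persist under deletion of other vertices. Your linear-extension ordering within each $X_i$ (most-cornered classes first, twins of a class consecutive) correctly guarantees that every vertex except possibly the last is a corner at the moment of its deletion, and the retract argument $c(G/\Theta_G)\le c(G)$ together with ``adjoining a twin adjoins a corner'' cleanly gives the equivalence of cop-winness for $G$ and $G/\Theta_G$.

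The one point deserving more care is the terminal/degenerate bookkeeping, and the blame here lies mostly with the definition as stated in the paper. If some $G_i$ becomes complete on at least two vertices, then $G_i/\Theta_{G_i}\cong K_1$ has no corners, so the literal definition yields $X_i=\emptyset$ and the process stalls before exhausting $V(G)$; likewise the definition simply declines to define a cop-win partition for complete $G$. Your convention (the final part is the vertex set of the complete remnant, and a complete graph has the one-part partition $V(G)$) is exactly the right fix and is what makes both your termination claim in the ``only if'' direction and the last-vertex exemption in Theorem~\ref{thm:copwin} go through --- note that the last class in your linear extension of $X_k$ is $\prec$-maximal and hence is \emph{not} cornered by any surviving class, so the argument genuinely needs the final remnant to be a clique of mutual twins. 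You should state this convention as part of the definition of a cop-win partition rather than invoking it only for the top-level complete case; with that amendment the proof is complete.
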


%Clarke et al. \cite{CFM14} shows how to calculate the capture time in the following theorem.\comment{Not sure we need this line.}

\begin{theorem}[Clarke et al. \cite{CFM14}]\label{thm:capture_time_partition}
Let $G$ have a cop-win partition $X_1,X_2,\dots,X_k$. Then $\mathrm{capt}_{1,1}(G)= k-1$ if every vertex of $X_k$ is adjacent to every vertex of $X_{k-1}$, or if $G$ has only one vertex.  Otherwise, $\mathrm{capt}_{1,1}(G) = k$.
\end{theorem}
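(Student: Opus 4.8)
The plan is to prove the statement by induction on $k$, the number of parts of the cop-win partition. Before the induction I would record three structural facts. First, $G_k$ is complete: if it had two vertices in distinct $\Theta_{G_k}$-classes, then a class that is maximal under the partial order $[u]\preceq[v]\iff N_{G_k}[u]\subseteq N_{G_k}[v]$ would be a class of $G_k/\Theta_{G_k}$ that is not a corner, contradicting $X_k=V(G_k)$. Second, for each $i$ the graph $G_{i+1}$ is a retract of $G_i$: every corner class of $G_i/\Theta_{G_i}$ is cornered by some class that is maximal (hence not a corner, hence contained in $V(G_{i+1})$), so the map $\rho_i$ fixing $V(G_{i+1})$ and sending each $\Theta_{G_i}$-class of $X_i$ into such a cornering class is a retraction, and it satisfies $N_{G_i}[v]\subseteq N_{G_i}[\rho_i(v)]$ for all $v\in V(G_i)$; in particular $X_2,\dots,X_k$ is the cop-win partition of $G_2$, so when $k\ge 3$ the condition ``every vertex of $X_k$ is adjacent to every vertex of $X_{k-1}$'' is literally the same statement for $G$ and for $G_2$. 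Third, $G$ has a universal vertex if and only if $k\le 2$ and every vertex of $X_k$ is adjacent to every vertex of $X_{k-1}$: a universal vertex is never a corner of $G/\Theta_G$ while every non-universal vertex is one, so in that case $X_1$ consists exactly of the non-universal vertices and $X_2$ of the universal ones; conversely, when $k=2$ and the condition holds, a vertex of $X_2$ adjacent to all of $X_1$ is, since $G_2$ is complete, adjacent to everything.

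For the upper bound I would show $\capt{1}(G)\le\capt{1}(G_2)+1$ by shadow-chasing. The cop plays an optimal strategy for the ordinary game on $G_2$, treating the ``robber'' there as the shadow $\rho_1(r)$ of the real robber; since $\rho_1$ is a homomorphism this shadow moves as a legal robber on $G_2$, so within $\capt{1}(G_2)$ rounds the cop occupies $\rho_1(r)$. Because $N_G[r]\subseteq N_G[\rho_1(r)]$, whatever the robber does on his next turn puts him in the cop's closed neighborhood, so she captures him one round later. For $k\ge 3$, applying the inductive hypothesis to $G_2$ gives $\capt{1}(G)\le k-1$ if the adjacency condition holds and $\capt{1}(G)\le k$ otherwise; the base cases $k\le 2$ (and $|V(G)|=1$) are handled directly, using that a complete graph on at least two vertices has capture time $1$.

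For the lower bound I would first dispose of the base case $k\le 2$ with the third structural fact: if the condition holds then $G$ has a universal vertex and the robber can only guarantee surviving the zeroth round, so $\capt{1}(G)\ge 1=k-1$; if it fails then $G$ has no universal vertex, the robber begins at distance at least $2$ from the cop, survives the first round, and $\capt{1}(G)\ge 2=k$. For $k\ge 3$ the heart of the argument is the estimate $\capt{1}(G)\ge\capt{1}(G_2)+1$; combined with the inductive hypothesis for $G_2$ this yields $\capt{1}(G)\ge k-1$ in general and $\ge k$ when the condition fails, closing the induction. The idea for this estimate is a robber strategy that uses the outer layer $X_1$ to buy one extra round: since $k\ge 3$, $G$ has no universal vertex, so the robber starts on a corner $x\in X_1$ at distance at least $2$ from the cop, sits there through the first round, then steps to $\rho_1(x)\in V(G_2)$ and from that point imitates a $G_2$-robber strategy that survives $\capt{1}(G_2)-1$ rounds against the shadow $\rho_1(c)$ of the cop; spliced together, this survives $\capt{1}(G_2)$ rounds of the game on $G$.

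The main obstacle is exactly this last estimate for $k\ge 3$. One must choose the launching corner in $X_1$ so that it is simultaneously far enough from the cop's opening and a legitimate starting point for a near-optimal $G_2$-robber strategy (and one must handle the case in which the cop's opening happens to dominate all of $X_1$), and one must carefully synchronize the rounds of the real game with those of the simulated $G_2$-game, checking that the robber is never captured ``for free'' at the moment his shadow crosses the boundary between $X_1$ and $V(G_2)$ and that the one-round offset is preserved thereafter. By contrast, the three structural facts, the shadow-chasing upper bound, and the $k\le 2$ base case should all be short and essentially routine.
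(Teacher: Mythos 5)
First, a point of reference: the paper does not prove this theorem at all --- it is quoted verbatim from Clarke, Finbow, and MacGillivray \cite{CFM14} --- so there is no in-paper argument to compare yours against, and I can only assess your proposal on its own terms. Your structural facts and your upper bound are sound: the retraction $\rho_1$ with $N_G[v]\subseteq N_G[\rho_1(v)]$, the shadow-chasing bound $\capt{1}(G)\le\capt{1}(G_2)+1$, and the $k\le 2$ base cases all check out (in the $k=2$, condition-fails case you need $|X_k|\ge 2$ so that $\capt{1}(G_2)=1$, but this follows from your third fact).

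The genuine gap is the lower bound $\capt{1}(G)\ge\capt{1}(G_2)+1$ for $k\ge 3$, which is the real content of the theorem; you have flagged it as the main obstacle but not closed it, and the outside-in strategy you propose --- burn a round on a corner in $X_1$, then drop into $G_2$ --- is not just under-specified but pointed the wrong way. Two failure modes: (i) the cop may open on a vertex whose closed neighborhood contains all of $X_1$ (for instance when $X_1$ is a single corner, as in the graphs $G_n^2$ of Section 4, she can simply start next to it), so no launching corner at distance at least $2$ exists; (ii) even when one exists, the robber's entry point into $G_2$ is confined to $N[x]\cap V(G_2)$, and nothing guarantees that any such vertex, facing the shadow $\rho_1(c_1)$ with the cop to move, is worth $\capt{1}(G_2)-1$ further rounds --- the optimal $G_2$ starting vertex depends on $\rho_1(c_1)$ and need not be reachable from any admissible $x$. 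The argument that actually works (and is, in substance, the one in \cite{CFM14}) runs inside-out: the robber begins at a vertex of $V(G_{k-1})$ (or of $V(G_{k-2})$ when the adjacency condition holds) lying outside $N[c_0]$ --- such a vertex exists precisely because $G_{k-1}$ has a universal vertex if and only if the condition holds --- and then retreats outward one layer per round, using the fact that a vertex of $V(G_{j+1})$ is not a corner of $G_j/\Theta_{G_j}$, so its closed $G_j$-neighborhood is never contained in that of the cop's shadow in $G_j$; the outer layer $X_1$ is the robber's last refuge, not his opening move. (Twins must be handled with care here, which is exactly why the partition is defined on $G_i/\Theta_{G_i}$ rather than on $G_i$.) You can see the reversal already on $P_6$: the robber who survives the required three rounds starts at $v_5\in X_2$ and finishes at $v_6\in X_1$, the opposite of your itinerary.
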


%Each of the results in (CITE BONATO) and (CITE GAVENCIAK) made use of a result like Lemma \ref{lem:retraction_capture_time} by constructing a gadget graph which had exactly one corner and appending the next vertex of a path to the unique corner of the current graph. (\textcolor{blue}{Need to say this previous sentence better}).

Armed with these results, we will build a family of graphs $\{G_n\}_{n=9}^\infty$ with large speed-$(2,2)$ capture time.  Each graph $G_n$ will contain $n$ vertices.  Among the $n$ vertices in $G_n$ will be two ``hub'' vertices, $h_1$ and $h_2$, along with special vertices $v_8, v_9, \dots, v_n$.  The graph $G_9$ is shown in the top left of Figure \ref{fig:capture_time_graphs}.  For $n\ge 10$, we construct $G_n$ from $G_{n-1}$ by adding a vertex $v_{n}$ adjacent only to $v_{n-1}$ and one of the hub vertices: if $n$ is odd then $v_n$ is adjacent to $h_1$, and otherwise it is adjacent to $h_2$.  See Figure \ref{fig:capture_time_graphs} for an illustration of the construction of $G_{10}, G_{11}$, and $G_{12}$.  

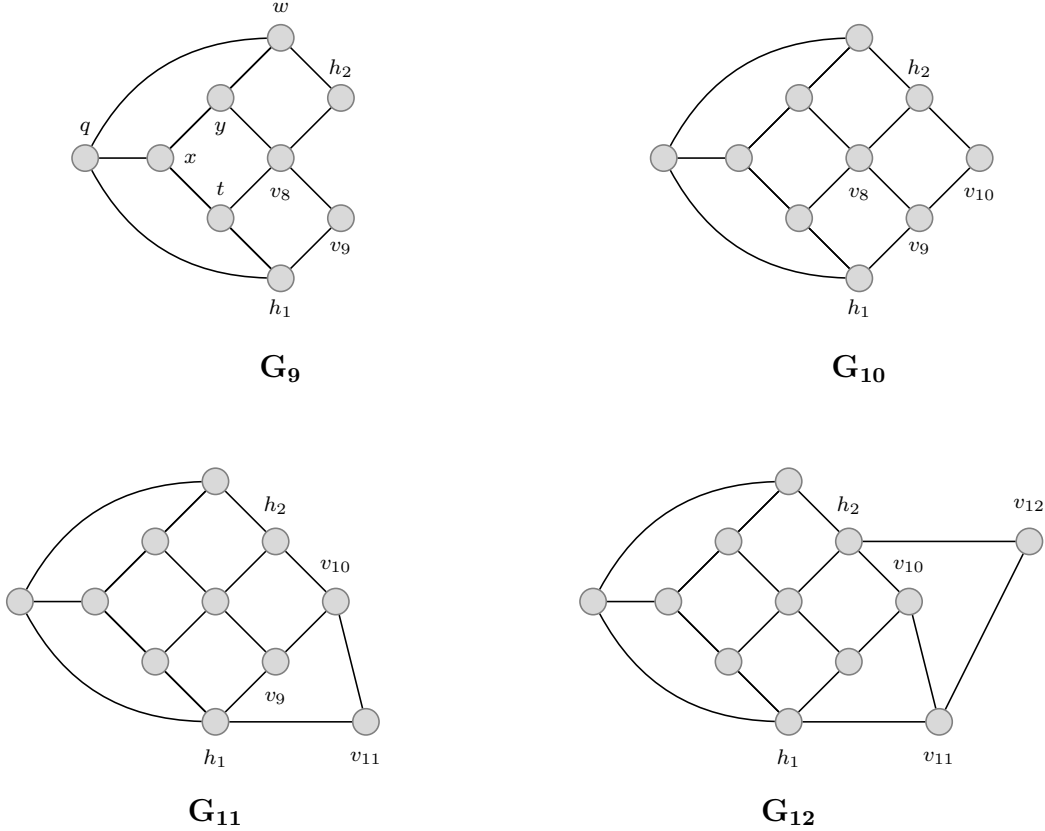
\begin{figure}[ht]
\hfill\,
\begin{tikzpicture}
[inner sep=0mm, semithick,
 vertex/.style={draw=gray, fill=gray!30, circle, minimum size=0.35cm},
 vertexlabel/.style={draw=none, fill=none, shape=rectangle, inner sep=2pt, font=\small},
 textnode/.style={draw=none, fill=none, shape=rectangle, inner sep=2pt},
 xscale=0.8,yscale=0.8,rotate=90]

\node (text) at (-0.5,-3) [textnode] {$\mathbf{G_{9}}$};
\node (v00) at (3,0.25) [vertex] {};
\node (v13) at (1,-3) [vertex] {};
\node (v22) at (2,-2) [vertex] {};
\node (v24) at (2,-4) [vertex] {};
\node (v31) at (3,-1) [vertex] {};
\node (v33) at (3,-3) [vertex] {};
\node (v42) at (4,-2) [vertex] {};
\node (v44) at (4,-4) [vertex] {};
\node (v53) at (5,-3) [vertex] {};

\draw (v13) -- (v24);
\draw (v44) -- (v53);
\draw (v24) -- (v33) -- (v42);
\draw (v22) -- (v33) -- (v44);
\draw (v13) -- (v22) -- (v31) -- (v42) -- (v53);
\draw (v13) -- (v22) -- (v31) -- (v42) -- (v53);
\draw (v00) to [bend right=30] (v13);
\draw (v00) -- (v31);
\draw (v00) to [bend left=30] (v53);

\node ($h_1$) at ($(v13)+(-0.5,0)$) [textnode] {\tiny \bfseries $h_1$};
\node ($h_2$) at ($(v44)+(0.5,0)$) [textnode] {\tiny \bfseries $h_2$};
\node ($v_{8}$) at ($(v33)+(-0.6,0)$) [textnode] {\tiny \bfseries $v_{8}$};
\node ($q$) at ($(v00)+(0.5,0)$) [textnode] {\tiny \bfseries $q$};
\node ($t$) at ($(v22)+(0.5,0)$) [textnode] {\tiny \bfseries $t$};
\node ($w$) at ($(v53)+(0.5,0)$) [textnode] {\tiny \bfseries $w$};
\node ($x$) at ($(v31)+(0,-0.5)$) [textnode] {\tiny \bfseries $x$};
\node ($y$) at ($(v42)+(-0.5,0)$) [textnode] {\tiny \bfseries $y$};
\node ($v_9$) at ($(v24)+(-0.5,0)$) [textnode] {\tiny \bfseries $v_9$};
\end{tikzpicture}
\hfill\hfill
\begin{tikzpicture}
[inner sep=0mm, semithick,
 vertex/.style={draw=gray, fill=gray!30, circle, minimum size=0.35cm},
 vertexlabel/.style={draw=none, fill=none, shape=rectangle, inner sep=2pt, font=\small},
 textnode/.style={draw=none, fill=none, shape=rectangle, inner sep=2pt},
 xscale=0.8,yscale=0.8,rotate=90]

\node (text) at (-0.5,-3) [textnode] {$\mathbf{G_{10}}$};
\node (v00) at (3,0.25) [vertex] {};
\node (v13) at (1,-3) [vertex] {};
\node (v22) at (2,-2) [vertex] {};
\node (v24) at (2,-4) [vertex] {};
\node (v31) at (3,-1) [vertex] {};
\node (v33) at (3,-3) [vertex] {};
\node (v35) at (3,-5) [vertex] {};
\node (v42) at (4,-2) [vertex] {};
\node (v44) at (4,-4) [vertex] {};
\node (v53) at (5,-3) [vertex] {};

\draw (v13) -- (v24) -- (v35) -- (v44) -- (v53);
\draw (v24) -- (v33) -- (v42);
\draw (v22) -- (v33) -- (v44);
\draw (v13) -- (v22) -- (v31) -- (v42) -- (v53);
\draw (v13) -- (v22) -- (v31) -- (v42) -- (v53);
\draw (v00) to [bend right=30] (v13);
\draw (v00) -- (v31);
\draw (v00) to [bend left=30] (v53);

\node ($h_1$) at ($(v13)+(-0.5,0)$) [textnode] {\tiny \bfseries $h_1$};
\node ($h_2$) at ($(v44)+(0.5,0)$) [textnode] {\tiny \bfseries $h_2$};
\node ($v_{10}$) at ($(v35)+(-0.6,0)$) [textnode] {\tiny \bfseries $v_{10}$};
\node ($v_{8}$) at ($(v33)+(-0.6,0)$) [textnode] {\tiny \bfseries $v_{8}$};
\node ($v_9$) at ($(v24)+(-0.5,0)$) [textnode] {\tiny \bfseries $v_9$};
\end{tikzpicture}
\hfill\,\\\bigskip\bigskip\bigskip
\hfill
\begin{tikzpicture}
[inner sep=0mm, semithick,
 vertex/.style={draw=gray, fill=gray!30, circle, minimum size=0.35cm},
 vertexlabel/.style={draw=none, fill=none, shape=rectangle, inner sep=2pt, font=\small},
 textnode/.style={draw=none, fill=none, shape=rectangle, inner sep=2pt},
 xscale=0.8,yscale=0.8,rotate=90]

\node (text) at (-0.5,-3) [textnode] {$\mathbf{G_{11}}$};
\node (v00) at (3,0.25) [vertex] {};
\node (v13) at (1,-3) [vertex] {};
\node (v22) at (2,-2) [vertex] {};
\node (v24) at (2,-4) [vertex] {};
\node (v31) at (3,-1) [vertex] {};
\node (v33) at (3,-3) [vertex] {};
\node (v35) at (3,-5) [vertex] {};
\node (v42) at (4,-2) [vertex] {};
\node (v44) at (4,-4) [vertex] {};
\node (v53) at (5,-3) [vertex] {};
\node (v15) at (1,-5.5) [vertex] {};

\draw (v13) -- (v24) -- (v35) -- (v44) -- (v53);
\draw (v24) -- (v33) -- (v42);
\draw (v22) -- (v33) -- (v44);
\draw (v13) -- (v22) -- (v31) -- (v42) -- (v53);
\draw (v13) -- (v22) -- (v31) -- (v42) -- (v53);
\draw (v00) to [bend right=30] (v13);
\draw (v00) -- (v31);
\draw (v00) to [bend left=30] (v53);
\draw (v13) -- (v15);
\draw (v35) -- (v15);

\node ($h_1$) at ($(v13)+(-0.6,0)$) [textnode] {\tiny \bfseries $h_1$};
\node ($h_2$) at ($(v44)+(0.6,0)$) [textnode] {\tiny \bfseries $h_2$};
\node ($v_{11}$) at ($(v15)+(-0.6,0)$) [textnode] {\tiny \bfseries $v_{11}$};
\node ($v_{10}$) at ($(v35)+(0.6,0)$) [textnode] {\tiny \bfseries $v_{10}$};
\node ($v_{9}$) at ($(v24)+(-0.6,0)$) [textnode] {\tiny \bfseries $v_{9}$};
\end{tikzpicture}
\hfill\hfill
\begin{tikzpicture}
[inner sep=0mm, semithick,
 vertex/.style={draw=gray, fill=gray!30, circle, minimum size=0.35cm},
 vertexlabel/.style={draw=none, fill=none, shape=rectangle, inner sep=2pt, font=\small},
 textnode/.style={draw=none, fill=none, shape=rectangle, inner sep=2pt},
 xscale=0.8,yscale=0.8,rotate=90]

\node (text) at (-0.5,-3) [textnode] {$\mathbf{G_{12}}$};
\node (v00) at (3,0.25) [vertex] {};
\node (v13) at (1,-3) [vertex] {};
\node (v22) at (2,-2) [vertex] {};
\node (v24) at (2,-4) [vertex] {};
\node (v31) at (3,-1) [vertex] {};
\node (v33) at (3,-3) [vertex] {};
\node (v35) at (3,-5) [vertex] {};
\node (v42) at (4,-2) [vertex] {};
\node (v44) at (4,-4) [vertex] {};
\node (v53) at (5,-3) [vertex] {};
\node (v15) at (1,-5.5) [vertex] {};
\node (v46) at (4,-7) [vertex] {};

\draw (v13) -- (v24) -- (v35) -- (v44) -- (v53);
\draw (v24) -- (v33) -- (v42);
\draw (v22) -- (v33) -- (v44);
\draw (v13) -- (v22) -- (v31) -- (v42) -- (v53);
\draw (v13) -- (v22) -- (v31) -- (v42) -- (v53);
\draw (v00) to [bend right=30] (v13);
\draw (v00) -- (v31);
\draw (v00) to [bend left=30] (v53);
\draw (v13) -- (v15);
\draw (v35) -- (v15);
\draw (v15) -- (v46);
\draw (v46) -- (v44);

\node ($h_1$) at ($(v13)+(-0.6,0)$) [textnode] {\tiny \bfseries $h_1$};
\node ($h_2$) at ($(v44)+(0.6,0)$) [textnode] {\tiny \bfseries $h_2$};
\node ($v_{12}$) at ($(v46)+(0.6,0)$) [textnode] {\tiny \bfseries $v_{12}$};
\node ($v_{11}$) at ($(v15)+(-0.6,0)$) [textnode] {\tiny \bfseries $v_{11}$};
\node ($v_{10}$) at ($(v35)+(0.6,0)$) [textnode] {\tiny \bfseries $v_{10}$};

\end{tikzpicture}
\hfill\,\\\bigskip
\caption{The graphs $G_9$, $G_{10}$, $G_{11}$, and $G_{12}$.}
\label{fig:capture_time_graphs}
\end{figure}

Intuitively, the key feature of these graphs is that each $G_n^2$ has exactly one corner, namely $v_n$.  Thus, to construct each $G_n$ from $G_{n-1}$, we add a unique corner.  Adding a unique corner increases a graph's capture time by 1 -- as we prove next, in Lemma \ref{lem:capture_time_unique_corner} -- so with each additional vertex we add, we increase the capture time. 
%A key feature of these graphs -- which we prove later, in Theorem \ref{thm:capture_time_speed2} -- is that each $G_n^2$ is cop-win and, moreover, has exactly one corner (namely $v_n$).  
%Our next lemma shows that removing a unique corner from a graph will reduce the capture time by 1; thus, each time we \textit{add} a unique corner 
%Lemma \ref{lem:capture_time_unique_corner} will help us to find the capture time of $G_n^2$ for all $n\ge 10$.

%\textcolor{blue}{Move four properties to Theorem \ref{thm:capture_time_speed2} and mention before Lemma \ref{lem:capture_time_unique_corner} that $G_n$ has a unique corner for all $n\ge 10$.}

%\comment{Maybe all of the below should simply be part of the proof that $\capt{2}^*(n) \ge n-7$.}

%Before proving that the $G_n$ satisfy properties (1)-(4), we introduce a helpful lemma.

\begin{lemma}\label{lem:capture_time_unique_corner}
    If $G$ is a nontrivial cop-win graph that contains a unique corner $v$, then $\mathrm{capt}_{1,1}(G)=\mathrm{capt}_{1,1}(G-v)+1$.
\end{lemma}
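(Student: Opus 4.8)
The plan is to work through the cop-win partition machinery of Clarke, Finbow, and MacGillivray (Theorem~\ref{thm:capture_time_partition} and the surrounding definitions) rather than reasoning about strategies directly. Since $G$ has a unique corner $v$, and since corners of $G$ are exactly the corners of $G/\Theta_G$ when no two vertices are twins at $v$ (one must be a little careful here: a ``unique corner'' should be read as meaning $v$ is the unique vertex that is cornered, so in particular $v$ has no twin, and $X_1 = \{v\}$ in the cop-win partition of $G$). Thus the cop-win partition of $G$ begins $X_1 = \{v\}, X_2, \dots, X_k$, and by the remark following the definition of cop-win partition, $X_2, \dots, X_k$ is precisely the cop-win partition of $G_2 = G - v$. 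So $G-v$ has a cop-win partition with $k-1$ parts while $G$ has one with $k$ parts.

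The first step is therefore to verify the claim that $X_1 = \{v\}$, i.e. that $v$ is the only corner of $G/\Theta_G$; this requires checking that ``unique corner'' in the hypothesis rules out twins of $v$ and rules out any other cornered vertex, which follows because if $u$ were a twin of $v$ then $u$ would also be a corner, contradicting uniqueness, and similarly any other cornered vertex would be a second corner. The second step is to apply Theorem~\ref{thm:capture_time_partition} to both $G$ and $G-v$. For $G-v$: it has cop-win partition $X_2, \dots, X_k$ with $k-1$ parts, so $\mathrm{capt}_{1,1}(G-v)$ is either $(k-1)-1 = k-2$ or $(k-1) = k-1$, according to whether every vertex of $X_k$ is adjacent to every vertex of $X_{k-1}$ (or $G-v$ is a single vertex). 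For $G$: it has $k$ parts, and the last two parts $X_{k-1}, X_k$ are the same as for $G-v$, so the same adjacency condition determines whether $\mathrm{capt}_{1,1}(G) = k-1$ or $k$. Since the condition ``every vertex of $X_k$ is adjacent to every vertex of $X_{k-1}$'' is identical in both cases (and $G$ is nontrivial so the single-vertex escape clause does not apply to $G$, though I should handle the edge case where $G-v$ is a single vertex separately), we get $\mathrm{capt}_{1,1}(G) = \mathrm{capt}_{1,1}(G-v) + 1$ in both branches.

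The main obstacle I anticipate is the degenerate case where $G-v$ is trivial (a single vertex), i.e. $G$ has two vertices; then the hypothesis forces $G = K_2$ (reflexive), $\mathrm{capt}_{1,1}(G-v)$ should be taken as $0$ (the cop starts on the unique vertex), and $\mathrm{capt}_{1,1}(K_2) = 1$, so the formula still holds — but this must be checked against the conventions in Theorem~\ref{thm:capture_time_partition}, where the single-vertex clause gives capture time $k-1$ with $k=1$, hence $0$. A secondary subtlety is making sure the ``unique corner'' hypothesis is interpreted consistently with the $\Theta_G$ quotient: I will state explicitly at the outset that a unique corner means no vertex other than $v$ is cornered by any vertex, which in particular entails $v$ is not twinned with anything, so that $[v]$ is a singleton equivalence class and genuinely the only corner of $G/\Theta_G$. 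Once those conventions are pinned down, the rest is a direct two-line application of Theorem~\ref{thm:capture_time_partition} to $G$ and to $G-v$, comparing the two outcomes part-by-part.
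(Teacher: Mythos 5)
Your proof is correct and follows essentially the same route as the paper's: establish that the unique-corner hypothesis rules out twins, so $G/\Theta_G\cong G$ and $X_1=\{v\}$ in the cop-win partition, then apply Theorem~\ref{thm:capture_time_partition} to $G$ and to $G-v$, whose partitions share the same final two classes. The only cosmetic difference is in the degenerate case: the paper simply observes that $G-v$ cannot be a single vertex (since then $G=K_2$, which has two corners), whereas you verify the formula directly there --- but as you yourself note, that case is excluded by the hypothesis, so either treatment is fine.
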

\begin{proof}
    Since $G$ is cop-win, by Proposition \ref{prop:copwin_copwin_partition}, $G$ has a cop-win partition $X_1,\dots,X_k$.  We note that there are no pairs of vertices $x,y\in V(G)$ such that $N[x]=N[y]$, as this would imply that both $x$ and $y$ are corners in $G$, contradicting the assumption that $v$ is the unique corner in $G$.  In particular, we have $G / \Theta_{G}\cong G$.  Furthermore, it follows that $X_1=\{v\}$.  Note that $G-v$ must have more than one vertex, since otherwise we would have $G=K_2$ and hence $G$ would have multiple corners.  Consequently, since $X_2,\dots,X_k$ is a cop-win partition of $G_2=G-X_1=G-v$, Theorem \ref{thm:capture_time_partition} shows that $\mathrm{capt}_{1,1}(G)=\mathrm{capt}_{1,1}(G-v)+1$, as desired.
\end{proof}

We are now ready to give a lower bound on $\capt{2}^*(n) \ge n-7$.

\begin{theorem}\label{thm:capture_time_speed2}
    For $n \ge 9$, we have $\capt{2}^*(n) \ge n-7$.
\end{theorem}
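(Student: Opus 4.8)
The plan is to prove, by induction on $n\ge 9$, that $G_n^2$ is cop-win and $\capt{1}(G_n^2)=n-7$. Since $G_n$ has exactly $n$ vertices and, by the consequence of the proof of Theorem~\ref{thm:graphpower} recorded above, $\capt{2}(G_n)=\capt{1}(G_n^2)$ whenever $\speed{2}(G_n)=1$, this immediately yields $\capt{2}^*(n)\ge\capt{2}(G_n)=n-7$, as claimed. For the base case $n=9$, I would argue directly from the nine-vertex graph $G_9$ of Figure~\ref{fig:capture_time_graphs}: one checks that $\rad(G_9)\ge 3$ (equivalently, that $G_9^2$ has no vertex adjacent to all others), so $\capt{1}(G_9^2)\ge 2$, and one checks that $G_9^2$ is cop-win and that $v_9$ is its unique corner, for instance by exhibiting its cop-win partition (Proposition~\ref{prop:copwin_copwin_partition}) or by confirming that $G_9^2-v_9$ is an eight-vertex cop-win graph. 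In fact $\capt{1}(G_9^2)=2$, but only the inequality $\capt{1}(G_9^2)\ge 2$ is needed, since the inductive step will supply equality thereafter.

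For the inductive step, fix $n\ge 10$ and assume $G_{n-1}^2$ is cop-win with $\capt{1}(G_{n-1}^2)=n-8$. The first fact to establish is that $G_n^2-v_n=G_{n-1}^2$. In $G_n$ the vertex $v_n$ has exactly the two neighbors $v_{n-1}$ and $h$, where $h=h_1$ if $n$ is odd and $h=h_2$ if $n$ is even; because $v_{n-2}$ shares the index parity of $v_n$, it is adjacent to the same hub $h$, and $v_{n-1}\,v_{n-2}\,h$ is a path of length $2$ in $G_{n-1}$ (when $n=10$ the vertex $v_8$ of $G_9$ plays the role of $v_{n-2}$, and one checks in $G_9$ that $v_9\,v_8\,h_2$ is such a path). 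Consequently, any walk in $G_n$ between two vertices of $G_{n-1}$ that passes through $v_n$ can be rerouted through this length-$2$ path to a walk of no greater length that avoids $v_n$, so $\textrm{dist}_{G_n}$ and $\textrm{dist}_{G_{n-1}}$ agree on $V(G_{n-1})$; hence deleting $v_n$ from $G_n^2$ returns exactly $G_{n-1}^2$.

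The second fact is that $v_n$ is the \emph{unique} corner of $G_n^2$. For existence, one computes $N_{G_n^2}[v_n]=\{v_n,v_{n-1},v_{n-2},h_1,h_2\}\cup N_{G_n}[h]$ and observes that $N_{G_n^2}[v_{n-2}]\supseteq N_{G_n^2}[v_n]$, so $v_{n-2}$ corners $v_n$. For uniqueness, one argues that no other vertex of $G_n^2$ is a corner: for an interior tail vertex $v_i$ with $10\le i\le n-1$, the path-like structure of the tail together with the two hubs forces any vertex that could dominate $v_i$ in $G_n^2$ to be one of a few specified nearby tail vertices, each of which is ruled out by a short distance-and-parity computation; and for the finitely many core vertices $h_1,h_2,v_8,v_9,q,t,w,x,y$, the first fact (applied repeatedly) shows that their closed second neighborhoods in $G_n^2$ are obtained from their closed second neighborhoods in $G_9^2$ by adjoining a common tail, so none is contained in another once this has been verified for $G_9^2$. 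Concretely, passing from $G_{n-1}^2$ to $G_n^2$ enlarges only $N[v_{n-1}]$, $N[v_{n-2}]$, and $N[h]$, each by the single vertex $v_n$; since $\textrm{dist}_{G_n}(v_n,v_{n-3})=3$ we have $v_n\notin N_{G_n^2}[v_{n-3}]$, so the domination that witnessed the unique corner of $G_{n-1}^2$ is destroyed, while the only new domination created is that of $v_n$ by $v_{n-2}$.

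Granting these two facts, $G_n^2$ is cop-win (it arises from the cop-win graph $G_{n-1}^2$ by appending the corner $v_n$), and Lemma~\ref{lem:capture_time_unique_corner} gives $\capt{1}(G_n^2)=\capt{1}(G_n^2-v_n)+1=\capt{1}(G_{n-1}^2)+1=n-7$, closing the induction. I expect the uniqueness clause in the second fact to be the main obstacle: ruling out every core vertex and every interior tail vertex as a corner of $G_n^2$ requires a somewhat tedious bookkeeping of second neighborhoods in $G_n$, and the cleanest organization is probably to isolate a small \emph{stable} core subgraph, verify the no-extra-corner property there once (the irregular shape of $G_9$ makes a computer check attractive), and then invoke the distance-stability identity of the first fact to conclude that attaching the tail introduces no new dominations.
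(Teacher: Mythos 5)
Your proposal follows the same route as the paper's proof: induction on $n$ over the graphs $G_n$, the identity $G_n^2-v_n=G_{n-1}^2$, the fact that $v_n$ is the unique corner of $G_n^2$ and is cornered by $v_{n-2}$, and Lemma~\ref{lem:capture_time_unique_corner} to increment the capture time. Two points in your treatment of the uniqueness step need attention. First, your claim that passing from $G_{n-1}^2$ to $G_n^2$ enlarges only $N[v_{n-1}]$, $N[v_{n-2}]$, and $N[h]$ is false: the vertex $v_n$ joins the closed neighborhood in $G_n^2$ of \emph{every} vertex within distance $2$ of $v_n$ in $G_n$, i.e.\ every vertex of $N_{G_n}[h]\cup N_{G_n}[v_{n-1}]$, which includes $q$, $t$, both hubs, and all tail vertices attached to $h$. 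The conclusion you draw survives anyway, because adding a vertex can only destroy containments $N[u]\subseteq N[w]$ among the old vertices, never create them; hence every corner of $G_n^2$ other than $v_n$ is already a corner of $G_n^2-v_n=G_{n-1}^2$ (if some $u$ were cornered by $v_n$ itself, then $N[u]\subseteq N[v_n]\subseteq N[v_{n-2}]$, so one may always take the dominator to be an old vertex). But the justification as you wrote it is not the reason.

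Second, the ``main obstacle'' you anticipate --- ruling out every core and interior tail vertex as a corner by bookkeeping second neighborhoods --- dissolves if you strengthen the induction hypothesis as the paper does: carry along not only that $v_{n-1}$ is the unique corner of $G_{n-1}^2$ but also that $v_{n-3}$ is its \emph{only} dominator (the paper's property~(1)). Then uniqueness in $G_n^2$ is a two-line argument: by the restriction observation above, the only candidate corner besides $v_n$ is $v_{n-1}$; its only possible dominator is $v_{n-3}$; and $v_n\in N_{G_n^2}[v_{n-1}]\setminus N_{G_n^2}[v_{n-3}]$ destroys that domination. No case analysis over the remaining vertices is required, and the only hand (or computer) verification needed is at the base case.
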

\begin{proof}
    To establish the theorem, it suffices to show that each $G_n$ is cop-win in the speed-$(2,2)$ game and that $\capt{2}(G_n) = n-7$.  To facilitate this, we will show that each $G_n$ (for $n \ge 10$) has the following properties:
\begin{enumerate}
    \item
        $v_{n-2}$ is the only vertex that corners $v_n$ in $G_n^2$;
    %\item
    %    $G_i$ is obtained from $G_{i-1}$ by attaching a vertex $v_i$ to $G_{i-1}$; \textcolor{blue}{Change this to definition of what $G_i$ is.}
    \item
        $v_n$ is the unique corner of $G_n^2$;
    \item
        $\speed{2}(G_n)=1$; and
    \item
        $\capt{2}(G_n)=n-7$.
\end{enumerate}

It is straightforward but tedious to verify that $G_9^2$ is cop-win and that $\mathrm{capt}_{1,1}(G_9^2)=2$.  (To aid the reader in this endeavor, we give the cop-win partition for $G_9$: $X_1=\{v_9,h_2\}$, $X_2=\{q,t,y,h_1,v_8,w\}$, $X_3=\{x\}$.  Note that all vertices in $X_2$ are adjacent to $x$; consequently, by Theorem \ref{thm:capture_time_partition}, we have $\mathrm{capt}_{1,1}(G_9^2)=3-1=2$.)

%    \comment{I see now what you meant about the properties being out of order.  (3) should really be (1), (2) should still be (2), (1) should be (3), and (4) should still be (4).  Maybe we should actually add a fifth property: $G_n^2-v_n = G_{n-1}^2$.  This doesn't really need induction, though...}
    For $n \ge 10$, we will show by induction that properties (1)-(4) hold (and in particular that $\capt{2}(G_n) = n-7$, as claimed).  It is clear by inspection and from the fact that $G_{10}^2-v_{10}=G_{9}^2$, as well as the result from Lemma \ref{lem:capture_time_unique_corner}, that all four properties hold when $n=10$.  Fix $k \ge 10$, and assume that properties (1)-(4) hold when $n \le k$; we will show that they also hold when $n=k+1$.  Let $n=k+1$ and suppose that $n$ is odd; the case where $n$ is even is similar.   We begin with properties (1) and (2).  Note that 
    \begin{align*}
    N_{G_n^2}[v_n] &= N_{G_n}[h_1] \cup N_{G_n}[v_{n-1}]\\
                   &= \{q,t,h_1,v_9,v_{11},\dots,v_{n-2},v_n\} \cup \{h_2,v_{n-2},v_{n-1},v_n\}\\
                   &= \{q,t,h_1,h_2,v_9,v_{11},\dots,v_{n-2},v_{n-1},v_n\};
    \end{align*}
    similarly, $N_{G_n^2}(v_{n-2})=\{q,t,h_1,h_2,v_9,v_{11},\dots,v_{n-4},v_{n-3},v_{n-1},v_n\}$.  Thus, $v_{n-2}$ corners $v_n$ in $G_n^2$.  Furthermore, $v_{n-2}$ is the only vertex in $G_n^2$ (aside from $v_n$) that is adjacent (in $G_n^2$) to $v_n, v_{n-1}$, $h_1$, $h_2$, and $q$.  
    %\comment{This may need justification.}  
    Therefore, $v_{n-2}$ is the only vertex that corners $v_n$ in $G_n^2$, which establishes property (1).  
    %Since (1) and (2) hold for $i-1$ \comment{This is not precise -- you mean ``(1) and (2) hold when $n=i-1$'', but really you should probably state the actual properties, e.g. ``by the induction hypothesis, $v_{n-1}$ is the unique corner in $G_{n-1}^2$ and is covered only by $v_{n-3}$.}, $v_{i-1}$ is the unique corner in $G_{i-1}^2$ and is only covered by $v_{i-3}$.  
    
    For (2), first note that $G_n^2-v_n = G_{n-1}^2$: all edges of $G_n^2-v_n$ are clearly present in $G_{n-1}^2$, with the possible exception of $h_1 v_{n-1}$ (since $h_1$ and $v_{n-1}$ are at distance 2 in $G_n$ by virtue of having $v_n$ as a common neighbor); however, $h_1$ and $v_{n-1}$ also have the common neighbor $v_{n-2}$ in $G_{n-1}$ and thus are in fact adjacent in $G_{n-1}^2$.  Consequently, all corners in $G_n^2$ (other than $v_n$) must necessarily be corners in $G_n^2-v_n$ and hence in $G_{n-1}^2$.  By the induction hypothesis, $v_{n-1}$ is the only corner in $G_{n-1}^2$ and it is only cornered by $v_{n-3}$; in $G_n^2$, however, the vertex $v_{n-1}$ is adjacent to $v_n$ but $v_{n-3}$ is not.  Thus $v_{n-3}$ does not corner $v_{n-1}$ in $G_n^2$, and so $v_{n-1}$ is not a corner in $G_n^2$.  It follows that $v_n$ is the unique corner in $G_n^2$, which establishes property (2).   
    
    It still remains to establish properties (3) and (4).  As argued above, $v_n$ is a corner in $G_n^2$, and $G_n^2-v_n = G_{n-1}^2$.  By the induction hypothesis, $G_{n-1}^2$ is cop-win; since removing a corner from $G_n^2$ yields a cop-win graph, $G_n^2$ itself must be cop-win, which establishes (3).  Finally, by Theorem \ref{thm:capture_time_partition}, we have $\capt{2}(G_n)=\mathrm{capt}(G_n^2)=\mathrm{capt}(G_n^2-v_n)+1=\mathrm{capt}(G_{n-1}^2)+1=\left((n-1)-7\right)+1=n-7$, which establishes (4).
\end{proof}

Theorem \ref{thm:capture_time_speed2} shows that when $n \ge 9$, we have $\capt{2}^*(n) \ge n-7$.  While we make no claim that in fact $\capt{2}^*(n) = n-7$, we do remark that based on computer search, it seems \textit{likely} that this is the case.  Moreover, if $G$ is cop-win in the speed-$(2,2)$ game, then $G^2$ is cop-win in the speed-$(1,1)$ game, and hence $\capt{2}(G) = \capt{1}(G^2) \le n-4$ by the result of Gaven\v{c}iak.  Consequently, for $n \ge 9$, we have $\capt{2}^*(n) \in \{n-7, n-6, n-5, n-4\}$.  Since every cop-win graph contains a corner, and since removing a corner decreases the capture time by at most 1, if one could show (perhaps by computer search) that $\capt{2}^*(9) = 2$ or that $\capt{2}^*(10) = 3$, then it would follow that $\capt{2}^*(n) \le n-7$.

\end{section}

\section{Future Work}\label{sec:open_questions}

Our work suggests several open problems and directions for future research; we list a few of the most interesting ones here.\\

\begin{itemize}
\item Theorem \ref{thm:subdivision} states that for every graph $G$, we have $c(G) \le \speeds(G^{(s)}) \le c(G)+1$.  However, although we proved that the upper bound holds with equality when $G=K_n$, we have been unable to find any examples of graphs $G$ such that $c_{s,s}(G^{(s)})=c(G)+1$ and $c(G)\ge 2$.  Are there graphs with $\speeds(G^{(s)}) = c(G)+1$ and $c(G) = k$ for all $s \ge 2$ and $k \ge 2$?\\  %\comment{The following is OK for the thesis, since it explains how you've tried approaching the problem, but I intend to remove it before submitting the paper for publication.} It is clear from following the proof of Theorem \ref{thm:subdivision} that $c(G)$ cops can capture a branch vertex in $G^{(s)}$ that is within distance $s$ of the robber.  From that point onward, the robber can be relegated to only occupying subdivision vertices for the remainder of the game by having one cop follow each of the robber's moves.  However, it is unclear how the remaining $c(G)-1$ cops should proceed in order to corner the robber.\\

\item Theorems \ref{thm:speeds_factors_monotone} and \ref{thm:nonincreasing_sequence} give some evidence that $c_{s,s}$ is monotonically decreasing in $s$ for any graph $G$ (see Conjecture \ref{conj:monotone}).  We suspect that this is indeed the case, but a proof eludes us.\\

\item Theorem \ref{thm:projective_plane_product} shows that in general $\speeds(G\cart H)$ cannot be bounded above by any function of $\speeds(G)$ and $\speeds(H)$.  However, is it possible to establish an upper bound on $\speed{s}(G \cart H)$ in terms of $\speed{t}(G)$ and $\speed{s-t}(H)$ for some $t < s$?  In particular, is it the case that
$$\speed{s}(G \cart H) \le \speed{t}(G) + \speed{s-t}(H)$$
for all $t \in \{1, \dots, s-1\}$?\\

\item We showed that the upper bound for $c_{s,s}(Q_d)$ given in Theorem \ref{thm:hypercube_upper} is tight when $s=2$.  In fact, Corollary \ref{cor:speed_2_hypercube} categorizes $c_{2,2}(Q_d)$ for all $d$.  For $s>2$, in order to match the upper bound, one would need to show that the robber can evade need $k$ cops on $Q_d$ when $d=2s+2k-2$.  We suspect this to be true, but proving it seems surprisingly difficult.  In particular, the technique used to prove Theorem \ref{thm:speed_2} does not seem to extend to large $s$, and the techniques behind Theorem \ref{thm:speed_s_hypercube_lower_bound} do not seem to be viable unless $s \gg k$.  Establishing a tight lower bound on $\speed{s}(Q_d)$ would seem to require a new approach.\\

\item For speed $s=2$, by Theorem \ref{thm:speed_s_large_grids} and the proof of Theorem \ref{thm:speed_2}, we have that if $G$ is the Cartesian product of $d$ trees, then $\left\lfloor d/2\right\rfloor\le c_{2,2}(G)\le\left\lceil d/2\right\rceil$.  It would be interesting to characterize the choices of $T_i$ that make $\speedtwo(G)$ equal to $\floor{d/2}$, and those that make it equal to $\ceil{d/2}$.\\

\item We suspect that $\capt{2}^*(n)=n-7$ for all $n\ge 9$.  By the result in Theorem \ref{thm:capture_time_speed2}, one only need show that $\capt{2}^*(9)=2$ or that $\capt{2}^*(10)=3$.  (Doing this by hand seems tedious, but computer search may be viable.)\\

\item From computer search of small graphs, we believe that $\capt{3}^*(n)=n-O(1)$; it would be interesting to verify (or refute) this.  More generally, what are the asymptotics of $\capt{s}^*(n)$ for larger $s$?  Is it perhaps the case that, in general, $\capt{s}^*(n) = n - f(s)$ for some function $f$?  
%We can construct graphs with relatively high capture time in the speed-$(3,3)$ game, but not quite reaching $O(n)$.  An interesting project would be to determine the asymptotics of $\capt{s}^*(n)$ for all $s\ge3$.  For which values of $s$ is $\capt{s}^*(n)=\Theta(n)$?
\end{itemize}
\end{section}

\end{document}